\newtheorem*{rep@theorem}{\rep@title}
\newcommand{\newreptheorem}[2]{%
\newenvironment{rep#1}[1]{%
 \def\rep@title{#2 \ref{##1}}%
 \begin{rep@theorem}}%
 {\end{rep@theorem}}}
\newtheorem{theorem}{Theorem}
\numberwithin{theorem}{section}
\newtheorem{lemma}[theorem]{Lemma}
\newtheorem{corollary}[theorem]{Corollary}
\newtheorem{conjecture}{Conjecture}
\numberwithin{conjecture}{section}
\theoremstyle{definition}
\newtheorem{question}[conjecture]{Question}
\newcommand{\del}{\partial}
\newcommand{\abs}[1]{\left\lvert #1 \right\rvert}
\newcommand{\co}{\colon\thinspace}
\DeclareMathOperator{\config}{config}
\DeclareMathOperator{\cell}{cell}
\DeclareMathOperator{\key}{key}
\DeclareMathOperator{\FI}{FI}
\DeclareMathOperator{\no}{no}
\DeclareMathOperator{\desc}{desc}
\begin{document}

\title[Representation stability for disks in a strip]{Generalized representation stability for disks in a strip and no-$k$-equal spaces}
\author{Hannah Alpert}
\address{University of British Columbia, 1984 Mathematics Rd, Vancouver, BC V6T\;1Z2, Canada}
\email{hcalpert@math.ubc.ca}
\subjclass[2010]{55R80 (05E10 20C30)}
\keywords{Representation stability, FI-modules, configuration spaces, discrete Morse theory}
% arXiv category: Algebraic Topology (Combinatorics, Representation Theory)

\begin{abstract}
For fixed $j$ and $w$, we study the $j$th homology of the configuration space of $n$ labeled disks of width $1$ in an infinite strip of width $w$.  As $n$ grows, the homology groups grow exponentially in rank, suggesting a generalized representation stability as defined by Church--Ellenberg--Farb and Ramos.  We prove this generalized representation stability for the strip of width $2$, leaving open the case of $w > 2$.  We also prove it for the configuration space of $n$ labeled points in the line, of which no $k$ are equal.
\end{abstract}

\maketitle

\section{Introduction}

The configuration space of $n$ labeled unit-diameter disks in an infinite strip of width $w$ is denoted $\config(n, w)$; Figure~\ref{fig-def} depicts an example configuration.  Specifically, parametrizing the configurations in terms of the centers of the disks, $\config(n, w)$ is the set of points $(x_1, y_1, \ldots, x_n, y_n) \in \mathbb{R}^2$, such that $(x_i - x_j)^2 + (y_i - y_j)^2 \geq 1$ for all $i$ and $j$, and such that $\frac{1}{2} \leq y_i \leq w - \frac{1}{2}$ for all $i$.  We would like to describe the topology of $\config(n, w)$.

\begin{figure}[h!]
\begin{center}
\begin{tikzpicture}[scale=.8, emp/.style={inner sep = 0pt, outer sep = 0pt}, >=stealth]
\draw (-1, 0)--(12, 0);
\draw (-1, 3)--(12, 3);
\node at (0, 1.5) {$w$};
\draw[->] (0, 2)--(0, 3);
\draw[->] (0, 1)--(0, 0);
\node[emp] (d2) at (2, 1) {$2$};
\node[emp] (d3) at (4, .5) {$3$};
\node[emp] (d1) at (4, 1.5) {$1$};
\node[emp] (d4) at (4, 2.5) {$4$};
\node[emp] (dn) at (6, 2) {$n$};
\node at (8, 1.5) {$\cdots$};
\node[emp] (d6) at (10, .5) {$6$};
\node[emp] (d5) at (10.707, 1.207) {$5$};
\draw (d2) circle (.5);
\draw (d3) circle (.5);
\draw (d1) circle (.5);
\draw (d4) circle (.5);
\draw (dn) circle (.5);
\draw (d6) circle (.5);
\draw (d5) circle (.5);
\end{tikzpicture}
\end{center}
\caption{The configuration space $\config(n, w)$ is the set of ways to arrange $n$ disjoint labeled disks of width $1$ in $\mathbb{R} \times [0, w]$.}\label{fig-def}
\end{figure}
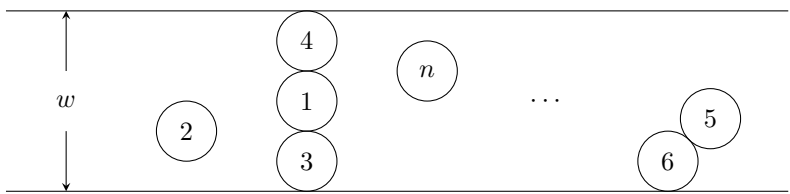

The topological study of disk configuration spaces was initiated by Baryshnikov, Bubenik, and Kahle in~\cite{Baryshnikov13}.  It is closely related to topological robotics and motion planning, described in Farber's survey~\cite{Farber08}.  Earlier, disk configuration spaces were studied probabilistically, in the context of the hard spheres gas model.  In the survey~\cite{Diaconis09}, Diaconis describes that work in statistical mechanics by physicists and materials scientists, and Carlsson et al.\ review the physics literature in \cite{Carlsson12}.

In addition to the study of disks in~\cite{Baryshnikov13}, others have studied the topology of configuration spaces of various identical rigid objects in various shapes of container, such as in \cite{Alpert17-disks}, \cite{Deeley11}, and \cite{Kusner16}.  The choice of disks in a strip is geometrically simplest among the possibilities.  The configuration space of $n$ labeled unit-diameter disks in the plane, which we denote by $\config(n)$, is homotopy equivalent to the configuration space of points in the plane, which is well understood (see, for instance, \cite{Arnold69} or~\cite{Sinha13}).  In fact, if the strip is wide compared to the number of disks, specifically if $w \geq n$, then $\config(n, w)$ and $\config(n)$ are homotopy equivalent.  For $w < n$, though, the strip shrinks the configuration space in a way that adds topology.

The paper~\cite{Alpert19} introduces the spaces $\config(n, w)$ and asks, for fixed $j$ and $w$, how does $H_j(\config(n, w))$ depend on $n$?  That paper estimates the dimension of $H_j(\config(n, w))$  up to a constant factor; it turns out to be exponential in $n$ unless the strip is wide compared to $j$.  The present paper continues the study of how $H_j(\config(n, w))$ depends on $n$, putting it into the framework of generalized representation stability as introduced by Ramos in~\cite{Ramos17}.  The goal is to give algebraic relationships between the various homology groups in a way that recovers the asymptotic results about their dimension growth.

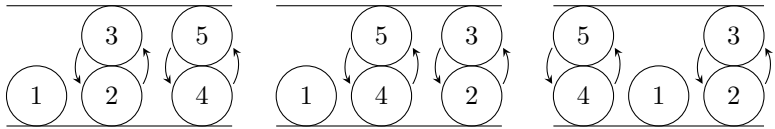
\begin{figure}
\begin{center}
\begin{tikzpicture}[scale=.8, emp/.style={inner sep = 0pt, outer sep = 0pt}, >=stealth]
\draw (-.25, 0)--(3.5, 0);
\draw (-.25, 2)--(3.5, 2);

\node (d1) at (.25, .5) {$1$};
\node (d2) at (1.5, .5) {$2$};
\node (d3) at (1.5, 1.5) {$3$};
\node (d4) at (3, .5) {$4$};
\node (d5) at (3, 1.5) {$5$};
\draw (d1) circle (.5);
\draw (d2) circle (.5);
\draw (d3) circle (.5);
\draw (d4) circle (.5);
\draw (d5) circle (.5);

\draw[->] (1.5+.52, .7) arc (-30:30:.6);
\draw[->] (1.5-.52, 1.3) arc (150:210:.6);
\draw[->] (3+.52, .7) arc (-30:30:.6);
\draw[->] (3-.52, 1.3) arc (150:210:.6);
\end{tikzpicture}\hspace{10pt}
\begin{tikzpicture}[scale=.8, emp/.style={inner sep = 0pt, outer sep = 0pt}, >=stealth]
\draw (-.25, 0)--(3.5, 0);
\draw (-.25, 2)--(3.5, 2);

\node (d1) at (.25, .5) {$1$};
\node (d2) at (1.5, .5) {$4$};
\node (d3) at (1.5, 1.5) {$5$};
\node (d4) at (3, .5) {$2$};
\node (d5) at (3, 1.5) {$3$};
\draw (d1) circle (.5);
\draw (d2) circle (.5);
\draw (d3) circle (.5);
\draw (d4) circle (.5);
\draw (d5) circle (.5);

\draw[->] (1.5+.52, .7) arc (-30:30:.6);
\draw[->] (1.5-.52, 1.3) arc (150:210:.6);
\draw[->] (3+.52, .7) arc (-30:30:.6);
\draw[->] (3-.52, 1.3) arc (150:210:.6);
\end{tikzpicture}\hspace{10pt}
\begin{tikzpicture}[scale=.8, emp/.style={inner sep = 0pt, outer sep = 0pt}, >=stealth]
\draw (-1.75, 0)--(1.75, 0);
\draw (-1.75, 2)--(1.75, 2);

\node (d1) at (0, .5) {$1$};
\node (d2) at (1.25, .5) {$2$};
\node (d3) at (1.25, 1.5) {$3$};
\node (d4) at (-1.25, .5) {$4$};
\node (d5) at (-1.25, 1.5) {$5$};
\draw (d1) circle (.5);
\draw (d2) circle (.5);
\draw (d3) circle (.5);
\draw (d4) circle (.5);
\draw (d5) circle (.5);

\draw[->] (1.25+.52, .7) arc (-30:30:.6);
\draw[->] (1.25-.52, 1.3) arc (150:210:.6);
\draw[->] (-1.25+.52, .7) arc (-30:30:.6);
\draw[->] (-1.25-.52, 1.3) arc (150:210:.6);
\end{tikzpicture}
\end{center}
\caption{A single homology class in the plane can correspond to several homology classes in the strip.  Pictured are three maps $S^1 \times S^1 \rightarrow \config(5, 2)$ that give distinct (indeed, linearly independent) classes in $H_2(\config(5, 2))$.}\label{fig-lindep}
\end{figure}

Why does $\dim H_j(\config(n, w))$ grow exponentially in $n$?  We know that when the strip is replaced by the plane, $\dim H_j(\config(n))$ grows polynomially in $n$.  Why does the subspace $\config(n, w)$ have so much more homology?  As depicted in Figure~\ref{fig-lindep}, cycles that are homologous in $H_j(\config(n))$ may not be homologous in $H_j(\config(n, w))$, because the strip is too narrow to let the various clusters of disks homotope past each other.  

The rough idea of the exponential growth in $\dim H_j(\config(n, w))$ is as follows.  In $\config(n, w)$, it is possible for $w$ disks to revolve around each other to make a $(w-1)$--cycle, forming a ``barrier'' that no other disks can pass.  (For $w = 2$, the barriers would be the circling pairs shown in Figure~\ref{fig-lindep}.)  Very broadly, the generators of $H_j(\config(n, w))$ look like sequences of barriers with smaller clusters of disks in between; if there are $b$ barriers, they divide the strip into $b+1$ intervals, so the remaining disks each have $b+1$ choices for which interval to be in.  This gives roughly $(b+1)^n$ linearly independent homology classes in $H_j(\config(n, w))$.

In some sense, once $n$ is large enough, incrementing $n$ by $1$ does not meaningfully change the structure of $H_j(\config(n, w))$---the extra disk has a choice of $b+1$ intervals to be placed in, and nothing else happens.  The framework of representation stability, first introduced in~\cite{Church13}, is well suited to situations such as this one.  In fact, one of the favorite examples of representation stability is the sequence $H_j(\config(n))$, as $n$ varies and $j$ stays fixed.  Each space $\config(n)$ has an action of $S_n$ by permuting the disks, so each $H_j(\config(n))$ is a representation of $S_n$.  Representation stability, very broadly, says that for sufficiently large $n$, incrementing $n$ by $1$ changes the $S_n$--representation $H_j(\config(n))$ in the most trivial way to give the $S_{n+1}$--representation $H_j(\config(n+1))$.  The topological reason for this is that $H_j(\config(n))$ turns out to be generated by cycles in which at most $2j$ of the disks move at all.  So, for $n > 2j$, the extra disks do nothing but sit on the side.

The formal way to talk about extra disks sitting on the side is to say that $H_j(\config(n))$ is a finitely generated $\FI$--module, first defined in~\cite{Church15} by Church, Ellenberg, and Farb.  The category $\FI$ is defined to have one object $[n] = \{1, 2, \ldots, n\}$ for each natural number $n$, and the morphisms between these objects are the injections.  For instance, the set of $\FI$--morphisms from $[n]$ to $[n]$ is the symmetric group $S_n$.  An $\FI$--module $M$ over a commutative ring $k$ is a functor from $\FI$ to $k$--modules; that is, we have a $k$--module $M_n$ for each $n$, and for each injection $[n] \rightarrow [m]$ we have a corresponding homomorphism $M_n \rightarrow M_m$.  In this paper we only consider the case $k = \mathbb{Z}$, where each of the modules is an abelian group.  For any $j$, the homology groups $M_n = H_j(\config(n))$ form an $\FI$--module over $\mathbb{Z}$; given an injection $\varphi\co [n] \rightarrow [m]$ we have a map $\varphi_*\co H_j(\config(n)) \rightarrow H_j(\config(m))$ given by the map of spaces that relabels the disks $1, 2, \ldots, n$ by $\varphi(1), \varphi(2), \ldots, \varphi(n)$ and places $m-n$ disks with the remaining labels off to the side, as shown in Figure~\ref{fig-fi-plane}.  An $\FI$--module is \textit{\textbf{finitely generated}} if there exists a finite set of elements $x_1, \ldots, x_r \in \bigsqcup_{n = 1}^\infty M_n$ such that the only $\FI$--submodule of $M$ containing $x_1, \ldots, x_r$ is $M$ itself.  Our $\FI$-module $H_j(\config(n))$ is finitely generated by classes in $H_j(\config(2j))$.

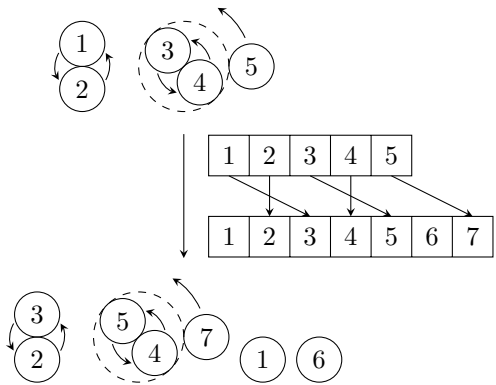
\begin{figure}
\begin{center}
\begin{tikzpicture}[scale=.6, emp/.style={inner sep = 0pt, outer sep = 0pt}, >=stealth]
\node (d3) at (-2.25, 6+.5) {$1$};
\node (d2) at (-2.25, 6-.5) {$2$};
\node (d4) at (.354, 6-.354) {$4$};
\node (d5) at (-.354, 6+.354) {$3$};
\node (d7) at (1.5, 6+0) {$5$};
\draw (d2) circle (.5);
\draw (d3) circle (.5);
\draw (d4) circle (.5);
\draw (d5) circle (.5);
\draw (d7) circle (.5);

\draw[->] (-2.25+.52, 6-1+.7) arc (-30:30:.6);
\draw[->] (-2.25-.52, 6-1+1.3) arc (150:210:.6);

\draw[->] (.58, 6+.155) arc (15:75:.6);
\draw[->] (-.58,6 -.155) arc (-165:-105:.6);
\draw[dashed] (0, 6+0) circle (1);
\draw[->] (1.359, 6+.634) arc (25:60:1.5);

%%%

\node (d3) at (-1-2.25, .5) {$3$};
\node (d2) at (-1-2.25, -.5) {$2$};
\node (d4) at (-1+.354, -.354) {$4$};
\node (d5) at (-1-.354, .354) {$5$};
\node (d7) at (-1+1.5, 0) {$7$};
\node (d1) at (-1+2.75, -.5) {$1$};
\node (d6) at (-1+4, -.5) {$6$};
\draw (d1) circle (.5);
\draw (d2) circle (.5);
\draw (d3) circle (.5);
\draw (d4) circle (.5);
\draw (d5) circle (.5);
\draw (d6) circle (.5);
\draw (d7) circle (.5);

\draw[->] (-1-2.25+.52, -1+.7) arc (-30:30:.6);
\draw[->] (-1-2.25-.52, -1+1.3) arc (150:210:.6);

\draw[->] (-1+.58, .155) arc (15:75:.6);
\draw[->] (-1-.58, -.155) arc (-165:-105:.6);
\draw[dashed] (-1+0, 0) circle (1);
\draw[->] (-1+1.359, .634) arc (25:60:1.5);

%%%

\draw[->] (0, 4.5)--(0, 1.75);
\node[emp] at (1, 4.025) {$1$};
\node[emp] at (1+.9, 4.025) {$2$};
\node[emp] at (1+2*.9, 4.025) {$3$};
\node[emp] at (1+3*.9, 4.025) {$4$};
\node[emp] at (1+4*.9, 4.025) {$5$};
\draw (1-.45, 4.025-.45)--(1+4*.9+.45, 4.025-.45)--(1+ 4*.9+.45, 4.025+.45)--(1-.45, 4.025+.45)--cycle
(1+.45, 4.025-.45)--(1+.45, 4.025+.45)
(1+.9+.45, 4.025-.45)--(1+.9+.45, 4.025+.45)
(1+2*.9+.45, 4.025-.45)--(1+2*.9+.45, 4.025+.45)
(1+3*.9+.45, 4.025-.45)--(1+3*.9+.45, 4.025+.45);

\node[emp] at (1, 2.225) {$1$};
\node[emp] at (1+.9, 2.225) {$2$};
\node[emp] at (1+2*.9, 2.225) {$3$};
\node[emp] at (1+3*.9, 2.225) {$4$};
\node[emp] at (1+4*.9, 2.225) {$5$};
\node[emp] at (1+5*.9, 2.225) {$6$};
\node[emp] at (1+6*.9, 2.225) {$7$};
\draw (1-.45, 2.225-.45)--(1+6*.9+.45, 2.225-.45)--(1+6*.9+.45, 2.225+.45)--(1-.45, 2.225+.45)--cycle
(1+.45, 2.225-.45)--(1+.45, 2.225+.45)
(1+.9+.45, 2.225-.45)--(1+.9+.45, 2.225+.45)
(1+2*.9+.45, 2.225-.45)--(1+2*.9+.45, 2.225+.45)
(1+3*.9+.45, 2.225-.45)--(1+3*.9+.45, 2.225+.45)
(1+4*.9+.45, 2.225-.45)--(1+4*.9+.45, 2.225+.45)
(1+5*.9+.45, 2.225-.45)--(1+5*.9+.45, 2.225+.45);

\draw[->] (1, 4.025-.45)--(1+2*.9, 2.225+.45);
\draw[->] (1+.9, 4.025-.45)--(1+.9, 2.225+.45);
\draw[->] (1+2*.9, 4.025-.45)--(1+4*.9, 2.225+.45);
\draw[->] (1+3*.9, 4.025-.45)--(1+3*.9, 2.225+.45);
\draw[->] (1+4*.9, 4.025-.45)--(1+6*.9, 2.225+.45);

\end{tikzpicture}
\end{center}
\caption{To have an $\FI$--module structure on $H_3(\config(n))$, there must be a map from $H_3(\config(5))$ to $H_3(\config(7))$ for each injection from $[5]$ to $[7]$.  Pictured are one class in $H_3(\config(5))$, one injection, and the class in $H_3(\config(7))$ that results.}\label{fig-fi-plane}
\end{figure}

The fact that $H_j(\config(n))$ is a finitely generated $\FI$--module implies that its dimension grows polynomially in $n$.  Roughly, to find generators for $H_j(\config(n))$, we take $\binom{n}{2j}$ copies of each generator of $H_j(\config(2j))$, one for each choice of which disks do and do not move.  In contrast, for disks in a strip, the homology groups $H_j(\config(n, w))$ have dimensions that grow exponentially in $n$ and thus cannot be finitely generated $\FI$--modules.  The reason is the same as the reason for exponential growth: when we add a disk there is a choice of which barriers to insert it between.

The appropriate algebraic notion for $H_j(\config(n, w))$ is that of a finitely generated $\FI_d$--module.  The best example for understanding the idea of an $\FI_d$--module is the $j$th homology of the configuration space of $n$ disks on the disjoint union of $d$ planes.  Each additional disk can be added to any of the $d$ planes.  

In~\cite{Ramos17}, Ramos introduces $\FI_d$--modules and shows that finitely generated $\FI_d$--modules satisfy a notion of generalized representation stability, and in~\cite{Ramos19} he shows that the homology groups of a certain kind of graph configuration space are finitely generated $\FI_d$--modules.  The category $\FI_d$, like $\FI$, has one object $[n]$ for each natural number $n$.  The morphisms are pairs $(\varphi, c)$, where $\varphi$ is an injection, say, from $[n]$ to $[m]$, and $c$ is a $d$--coloring on the complement of the image of $\varphi$; that is, $c$ is a map from $[m] \setminus \varphi([n])$ to a set with $d$ elements such as $\{0, 1, \ldots, d-1\}$.  An $\FI_d$--module is a functor from $\FI_d$ to modules.  Figure~\ref{fig-strip-morph} sketches the $\FI_{j+1}$--module structure for $H_j(\config(n, 2))$: the colors of the disks, shown in the picture as the numbers in the diamonds, indicate where to insert the disks between the barriers.

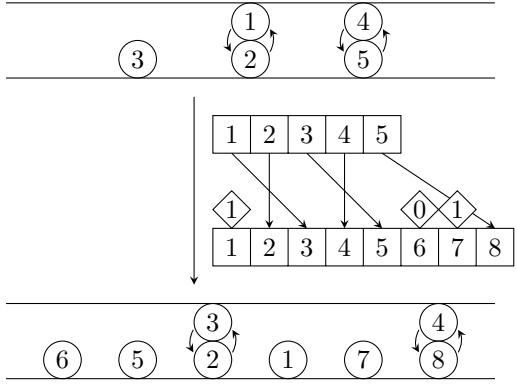
\begin{figure}
\begin{center}
\begin{tikzpicture}[scale=.5, emp/.style={inner sep = 0pt, outer sep = 0pt}, >=stealth]
\node[emp] at (1, 1.5) {$1$};
\node[emp] at (2, 1.5) {$2$};
\node[emp] at (3, 1.5) {$3$};
\node[emp] at (4, 1.5) {$4$};
\node[emp] at (5, 1.5) {$5$};
\draw (.5, 1)--(5.5, 1)--(5.5, 2)--(.5, 2)--cycle (1.5, 1)--(1.5, 2) (2.5, 1)--(2.5, 2) (3.5, 1)--(3.5, 2) (4.5, 1)--(4.5, 2);

\node[emp] at (1, -1.5) {$1$};
\node[emp] at (2, -1.5) {$2$};
\node[emp] at (3, -1.5) {$3$};
\node[emp] at (4, -1.5) {$4$};
\node[emp] at (5, -1.5) {$5$};
\node[emp] at (6, -1.5) {$6$};
\node[emp] at (7, -1.5) {$7$};
\node[emp] at (8, -1.5) {$8$};
\draw (.5, -2)--(8.5, -2)--(8.5, -1)--(.5, -1)--cycle (1.5, -2)--(1.5, -1) (2.5, -2)--(2.5, -1) (3.5, -2)--(3.5, -1) (4.5, -2)--(4.5, -1) (5.5, -2)--(5.5, -1) (6.5, -2)--(6.5, -1) (7.5, -2)--(7.5, -1);

\draw[->] (1, 1)--(3, -1);
\draw[->] (2, 1)--(2, -1);
\draw[->] (3, 1)--(5, -1);
\draw[->] (4, 1)--(4, -1);
\draw[->] (5, 1)--(8, -1);

\node[emp] at (1, -.5) {$1$};
\draw (1, -1)--(1.5, -.5)--(1, 0)--(.5, -.5)--cycle;
\draw[fill=white] (6, -1)--(6.5, -.5)--(6, 0)--(5.5, -.5)--cycle;
\node[emp] at (6, -.5) {$0$};
\draw[fill=white] (7, -1)--(7.5, -.5)--(7, 0)--(6.5, -.5)--cycle;
\node[emp] at (7, -.5) {$1$};

\draw[->] (0, 2.5)--(0, -2.5);

%%%

\draw (-5, -3)--(8, -3);
\draw (-5, -5)--(8, -5);
\node[emp] (d6) at (-3.5, -4.5) {$6$};
\node[emp] (d5) at (-1.5, -4.5) {$5$};
\node[emp] (d2) at (.5, -4.5) {$2$};
\node[emp] (d3) at (.5, -3.5) {$3$};
\node[emp] (d1) at (2.5, -4.5) {$1$};
\node[emp] (d7) at (4.5, -4.5) {$7$};
\node[emp] (d8) at (6.5, -4.5) {$8$};
\node[emp] (d4) at (6.5, -3.5) {$4$};

\draw (d1) circle (.5);
\draw (d2) circle (.5);
\draw (d3) circle (.5);
\draw (d4) circle (.5);
\draw (d5) circle (.5);
\draw (d6) circle (.5);
\draw (d7) circle (.5);
\draw (d8) circle (.5);

\draw[->] (.5+.52, -5+.7) arc (-30:30:.6);
\draw[->] (.5-.52, -5+1.3) arc (150:210:.6);
\draw[->] (6.5+.52, -5+.7) arc (-30:30:.6);
\draw[->] (6.5-.52, -5+1.3) arc (150:210:.6);

%%%

\draw (-5, 3)--(8, 3);
\draw (-5, 5)--(8, 5);
%\node[emp] (d6) at (-3.5, -4.5) {$6$};
\node[emp] (d5) at (-1.5, 3.5) {$3$};
\node[emp] (d2) at (1.5, 3.5) {$2$};
\node[emp] (d3) at (1.5, 4.5) {$1$};
%\node[emp] (d1) at (2.5, -4.5) {$1$};
%\node[emp] (d7) at (4.5, -4.5) {$7$};
\node[emp] (d8) at (4.5, 3.5) {$5$};
\node[emp] (d4) at (4.5, 4.5) {$4$};

%\draw (d1) circle (.5);
\draw (d2) circle (.5);
\draw (d3) circle (.5);
\draw (d4) circle (.5);
\draw (d5) circle (.5);
%\draw (d6) circle (.5);
%\draw (d7) circle (.5);
\draw (d8) circle (.5);

\draw[->] (1.5+.52, 3+.7) arc (-30:30:.6);
\draw[->] (1.5-.52, 3+1.3) arc (150:210:.6);
\draw[->] (4.5+.52, 3+.7) arc (-30:30:.6);
\draw[->] (4.5-.52, 3+1.3) arc (150:210:.6);

\end{tikzpicture}
\end{center}
\caption{When applying this $\FI_3$--morphism to a class in $H_2(\config(5, 2))$, we insert the disks with color--$k$ labels immediately after the $k$th circling pair.}\label{fig-strip-morph}
\end{figure}

We conjecture that for each $j$ and $w$, the sequence $H_j(\config(n, w))$ forms a finitely generated $\FI_d$--module for $d = 1 + \left\lfloor \frac{j}{w-1}\right\rfloor$, that is, $d$ is one more than the maximum possible number of barriers.  In this paper we prove the statement for $w = 2$; we explore in Section~\ref{sec-conclusion} which aspects of the proof seem harder to adapt for $w>2$.

\begin{reptheorem}{thm-fid}
For any $j$, the homology groups $H_j(\config(n, 2))$ form a finitely generated $\FI_{j+1}$--module over $\mathbb{Z}$.
\end{reptheorem}

The same techniques allow us to prove a similar result for a family of spaces that are closely related to the configuration spaces of disks in a strip, but are much more well-studied.  The no--$k$--equal space of the line, also known as the complement of the $k$--equal subspace arrangement, was introduced by Bj\"orner and Welker in~\cite{Bjorner95} and is the set of $n$--tuples of points in $\mathbb{R}$ such that no $k$ of them are equal.  We denote this space by $\no_k(n, \mathbb{R})$ and think of it as a configuration space of points in the line.  There is a map $\config(n, w) \rightarrow \no_{w+1}(n, \mathbb{R})$ sending each configuration to the $n$--tuple of $x$--coordinates of the centers of the disks, and the induced map on homology $H_j(\config(n, w)) \rightarrow H_j(\no_{w+1}(n, \mathbb{R}))$ is projection to a direct summand, as we show in Corollary~\ref{cor}.

The homology of $\no_k(n, \mathbb{R})$ grows exponentially in $n$ for the same reason that the homology of $\config(n, w)$ does: a cluster of $k$ points among the $n$ points can form a $(k-2)$--cycle that acts as a barrier, and the remaining points cannot cross from the left of the barrier to the right of the barrier.  Unlike in the case of $\config(n, w)$, for $\no_k(n, \mathbb{R})$ we can prove for all $k$ that the homology groups give $\FI_d$--modules.  Our results recover the computation of homology of $\no_k(n, \mathbb{R})$ from~\cite{Bjorner95}.

\begin{reptheorem}{thm-nok-fid}
For any $j \geq 0$ and $k \geq 2$, the homology groups $H_j(\no_k(n, \mathbb{R}))$ are zero unless $j$ is a multiple of $k-2$.  If $j = b(k-2)$ for some integer $b$, then the homology groups $H_j(no_k(n, \mathbb{R}))$ form a finitely generated $\FI_{b+1}$--module over $\mathbb{Z}$.
\end{reptheorem}

In Section~\ref{sec-complex} we give cell complexes $\cell(n, w)$ and $\desc(n, k-1)$ that are homotopy equivalent to $\config(n, w)$ and $\no_k(n, \mathbb{R})$, respectively.  In Section~\ref{sec-gradient} we apply discrete Morse theory to the cell complexes: we construct discrete gradient vector fields that allow us to collapse the cell complexes and eliminate most of the cells.  In Section~\ref{sec-basis} we construct a $\mathbb{Z}$--basis for each homology group, indexed by the critical cells of our discrete vector field.  In Section~\ref{sec-abstract} we prove a general lemma about how to specify an $\FI_d$--module.  In Section~\ref{sec-finish} we show that our homology groups satisfy the hypothesis of this lemma and thus form $\FI_d$--modules, and we verify that these $\FI_d$--modules are finitely generated.  In Section~\ref{sec-conclusion} we conclude by speculating about the conjectured generalization for strips of width $w > 2$.

\emph{Acknowledgments.}  This work was supported by the National Science Foundation under Award No.~DMS-1802914.  I am very grateful to Andy Putman, Nate Harman, Jenny Wilson, John Wiltshire-Gordon, and Eric Ramos, who all pointed me toward relevant and accessible information about representation stability; as someone completely unfamiliar with it, I would not have known where to start otherwise.  I also had many useful conversations with Matt Kahle about this material.

\section{Cells labeled by symbols of blocks}\label{sec-complex}

This paper is based on the technique of the paper~\cite{Alpert19}, which is to replace the configuration space $\config(n, w)$ by a homotopy-equivalent cell complex $\cell(n, w)$ and to estimate the homology by doing combinatorics (specifically, discrete Morse theory) on the cell complex.  We use the same cell complex $\cell(n, w)$ as in that paper, and we use the same method to find a cell complex $\desc(n, k-1)$ that is homotopy equivalent to the no--$k$--equal space $\no_k(n, \mathbb{R})$.  In the remainder of this section we define the complexes $\cell(n, w)$ and $\desc(n, w)$, and we prove that $\desc(n, k-1)$ is homotopy equivalent to $\no_k(n, \mathbb{R})$ by adapting the method of~\cite{Alpert19}.

The cell complex $\cell(n, w)$ is defined as a subcomplex of a cell complex $\cell(n)$ described by~\cite{Blagojevic14}.  In $\cell(n)$, every cell is labeled by a \textit{\textbf{symbol}}, which consists of a string of numbers and vertical bars, such that the numbers form a permutation of the numbers $1$ through $n$, and each vertical bar is both immediately preceded and immediately followed by a number.  Thinking of the numbers as the labels of the disks in $\config(n)$, we sometimes refer to the numbers in a symbol as labels.  Each substring between one vertical bar and the next (or before the first bar or after the last bar) is called a \textit{\textbf{block}}.  We think of the elements of each block as the labels of disks in a vertical stack in $\config(n, w)$, as in Figure~\ref{fig-symbol}.

\begin{figure}
\begin{center}
\begin{tikzpicture}[scale=.5, emp/.style={inner sep = 0pt, outer sep = 0pt}, >=stealth]
\node (a1) at (0, .5) {$7$};
\node (a2) at (0, 1.5) {$8$};
\node (a3) at (1.5, .5) {$6$};
\node (a4) at (3.75, 1.5) {$1$};
\node (a5) at (3.75, 3.5) {$5$};
\node (a6) at (3.75, 4.5) {$4$};
\node (a7) at (3.75, .5) {$3$};
\node (a8) at (3.75, 2.5) {$9$};
\draw (a1) circle (.5);
\draw (a2) circle (.5);
\draw (a3) circle (.5);
\draw (a4) circle (.5);
\draw (a5) circle (.5);
\draw (a6) circle (.5);
\draw (a7) circle (.5);
\draw (a8) circle (.5);
\draw (-1, 0)--(5.5, 0) (-1, 5)--(5.5, 5);
\end{tikzpicture}\hspace{20pt}
\begin{tikzpicture}[scale=.5, emp/.style={inner sep = 0pt, outer sep = 0pt}, >=stealth]
\node (a1) at (0, .5) {$7$};
\node (a2) at (0, 1.5) {$8$};
\node (a3) at (1.5, .5) {$6$};
\node (a4) at (3, .5) {$1$};
\node (a5) at (3, 1.5) {$5$};
\node (a6) at (3, 2.5) {$4$};
\node (a7) at (4.5, .5) {$3$};
\node (a8) at (4.5, 1.5) {$9$};
\draw (a1) circle (.5);
\draw (a2) circle (.5);
\draw (a3) circle (.5);
\draw (a4) circle (.5);
\draw (a5) circle (.5);
\draw (a6) circle (.5);
\draw (a7) circle (.5);
\draw (a8) circle (.5);
\draw (-1, 0)--(6, 0) (-1, 5)--(6, 5);
\end{tikzpicture}
\end{center}
\caption{We can imagine each symbol of $\cell(n, w)$ as a configuration in $\config(n, w)$ where the numbers in each block are the labels in a column of disks.  Pictured are configurations representing the symbol $8\ 7\ \vert\ 6\ \vert\ 4\ 5\ 9\ 1\ 3$ and its face $8\ 7\ \vert\ 6\ \vert\ 4\ 5\ 1\ \vert\ 9\ 3$.}\label{fig-symbol}
\end{figure}
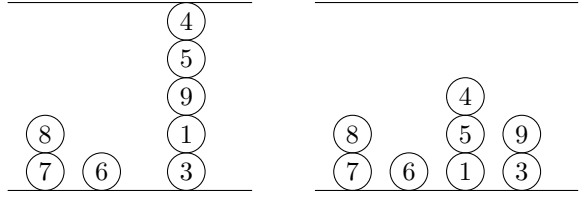

As shown in~\cite{Blagojevic14}, there is a way to form $\cell(n)$ as a polyhedral cell complex in which the cells are labeled by these symbols, with the following incidence relation.  Given two cells $f$ and $g$, we have that $f$ is a top-dimensional face of $g$ if and only if the symbol of $g$ can be obtained from the symbol of $f$ by removing a bar and combining the adjacent two blocks by a shuffle that preserves the ordering of the numbers in each of the two blocks.  The dimension of a cell is equal to $n-1$ minus the number of bars; equivalently, the dimension can be obtained by adding up $1$ less than the block size, for each block.  

To say that the cell complex is polyhedral means that it can be realized as a set of convex polytopes in Euclidean space, such that for each polytope, each of its faces (of any dimension) is also in the set, and the intersection of any two polytopes (if nonempty) is a shared face.  In this paper we do not work with the embedding of $\cell(n)$ in Euclidean space, only with the symbols of the cells, so in order to compute the homology with $\mathbb{Z}$--coefficients, we need to specify orientations and signs.

To specify the signs of the incidences in $\cell(n)$, we use the structure of the cells as products of permutahedra.  By an \textit{\textbf{injected cell}} we mean the result of taking any symbol in $\cell(n)$ and any injection from $[n]$ to a larger set $[m]$, and applying the injection to every number that appears in the symbol.  Given two injected cells with disjoint sets of labels, we can take the \textit{\textbf{concatenation product}} by writing the two symbols next to each other with a vertical bar in between.  In this way, every cell in $\cell(n)$ is a concatenation product of smaller-dimensional injected cells, except for the top-dimensional cells in $\cell(n)$, which have no vertical bars.

The signs are defined as follows.  If $g$ is a single block---that is, an injected cell with no vertical bars---and $f$ is a top-dimensional face of $g$, then we define the coefficient of $f$ in $\del g$ to be the sign of the permutation that results from deleting the bar in $f$ and not reshuffling.  To define signs on concatenation products, we use the following Leibniz rule: if $g_1$ and $g_2$ are injected cells with disjoint sets of labels, then
\[\del(g_1\ \vert\ g_2) = \del g_1\ \vert\ g_2 + (-1)^{b(g_1)}g_1 \vert\ \del g_2,\]
where $b(g_1)$ denotes the number of blocks in $g_1$.

We can check that these signs are consistent by verifying that $\del^2 = 0$.

\begin{lemma}
The differential on $\cell(n)$ satisfies $\del^2 = 0$.
\end{lemma}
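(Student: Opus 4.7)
The plan is to prove $\del^2 = 0$ by strong induction on the number of blocks $b(g)$. For the inductive step, when $b(g) \ge 2$, I would write $g$ as a concatenation $g = g_1 \mid g_2$ with both factors having strictly fewer blocks. Since every top-dimensional face of an injected cell with $b$ blocks has exactly $b+1$ blocks, each term of $\del g_1$ has exactly $b(g_1) + 1$ blocks, so applying the Leibniz rule twice gives
\[
\del^2(g_1 \mid g_2) \;=\; \del^2 g_1 \mid g_2 \;+\; \bigl[(-1)^{b(g_1)+1} + (-1)^{b(g_1)}\bigr]\,(\del g_1 \mid \del g_2) \;+\; g_1 \mid \del^2 g_2.
\]
The middle bracket vanishes, and the remaining terms are zero by the inductive hypothesis.

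For the base case $b(g) = 1$, I would verify $\del^2 g = 0$ by a direct sign count. Each codimension-two face is a three-block cell $C_1 \mid C_2 \mid C_3$, corresponding to an ordered tripartition $(S_1, S_2, S_3)$ of the labels of $g$ with internal orders inherited from $g$. Such a face arises in $\del^2 g$ via exactly two intermediate two-block faces: the \emph{left-merge} $M_{12} \mid C_3$, followed by splitting $M_{12}$ into $C_1 \mid C_2$; and the \emph{right-merge} $C_1 \mid M_{23}$, followed by splitting $M_{23}$ into $C_2 \mid C_3$. I would then compute the sign along each path from the definition of $\del$ on single blocks together with the Leibniz rule.

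The main obstacle will be verifying that these two signs cancel. Writing $\operatorname{inv}(S,T)$ for the number of pairs $(i,j) \in S \times T$ with $i$ appearing after $j$ in $g$, the coefficient contributed along the left-merge path is $(-1)^{\operatorname{inv}(S_1 \cup S_2,\, S_3) + \operatorname{inv}(S_1, S_2)}$, while the right-merge path contributes $(-1)^{\operatorname{inv}(S_1,\, S_2 \cup S_3) + \operatorname{inv}(S_2, S_3)}$. Both exponents expand to $\operatorname{inv}(S_1,S_2) + \operatorname{inv}(S_1,S_3) + \operatorname{inv}(S_2,S_3)$, so the unsigned contributions are equal. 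The Leibniz rule introduces an additional factor $(-1)^{b(C_1)} = -1$ along the right-merge path, since $C_1$ is a single block, and this sign is precisely what cancels the two contributions and yields $\del^2 g = 0$.
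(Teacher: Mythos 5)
Your proposal is correct and takes essentially the same route as the paper: handle the single-block base case by comparing signs along the two intermediate faces of a codimension-two face, and handle concatenation products by the Leibniz rule, noting that $b(\del g_1) = b(g_1) + 1$ makes the cross term vanish. The only cosmetic difference is that in the base case you verify the equality of the permutation-sign contributions by explicitly expanding both inversion counts to $\operatorname{inv}(S_1,S_2)+\operatorname{inv}(S_1,S_3)+\operatorname{inv}(S_2,S_3)$, whereas the paper observes more briefly that both paths realize the same permutation from $g$ to $e$; both correctly isolate the Leibniz-rule factor of $-1$ on the right-merge path as the source of the cancellation.
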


\begin{proof}
Let $g$ be a cell in $\cell(n)$.  First we suppose that $g$ is a single block.  Let $e$ be a codimension--$2$ face of $g$.  Then $e = e_1\ \vert\ e_2\ \vert\ e_3$, where $e_1$, $e_2$, and $e_3$ are blocks.  Then there are two intermediate faces between $e$ and $g$.  We denote them by $x = x_1\ \vert\ e_3$ and $y = e_1\ \vert\ y_2$.  We compare the sign of $e$ in $x$ times the sign of $x$ in $g$, and the sign of $e$ in $y$ times the sign of $y$ in $g$, and we show that these two products are opposite signs.  If we consider just the contribution from the signs of the permutations, both products give the sign of the permutation relating $e$ and $g$, so those contributions are equal.

For the contribution from the Leibniz rule, only the incidence between $e$ and $y$ involves splitting a block that is not the first, so that incidence has a sign contribution of $-1$ from the Leibniz rule, and all the other incidences have a sign contribution of $1$ from the Leibniz rule.  Thus, multiplying out Leibniz rule contributions with the permutation sign contributions, we see that the total coefficient of $e$ in $g$ is zero, proving that $\del^2 g = 0$ when $g$ is a single block.

If $g$ is a concatenation product $g_1 \ \vert\ g_2$, then we use induction on the number of blocks.  The Leibniz rule gives
\[\del^2(g_1 \ \vert\ g_2) = \del^2 g_1\ \vert\ g_2 + \left [(-1)^{b(g_1)} + (-1)^{b(\del g_1)}\right ]\cdot \del g_1\ \vert\ \del g_2 + g_1\ \vert\ \del^2 g_2,\]
which is indeed zero.
\end{proof}

Having described the structure of the complex $\cell(n)$, we define $\cell(n, w)$ to be the subcomplex of $\cell(n)$ consisting of all cells for which every block has at most $w$ elements.  We define $\desc(n, w)$ to be the subcomplex of $\cell(n, w)$ in which, in addition, the elements of each block appear in descending order.  The results in this paper concern $\cell(n, w)$ only for the special case of $w = 2$, but they address $\desc(n, w)$ for all $w$.

Theorem~3.1 of~\cite{Alpert19} shows that $\cell(n, w)$ is homotopy equivalent to the configuration space $\config(n, w)$ of $n$ disks in a strip of width $w$.  The strategy is to find an open cover of $\config(n, w)$ indexed by the symbols in $\cell(n, w)$, where the intersections between open sets correspond to incidences in $\cell(n, w)$; the nerve theorem then implies that $\config(n, w)$ is homotopy equivalent to the barycentric subdivision of $\cell(n, w)$, and thus is also homotopy equivalent to $\cell(n, w)$.

The no--$k$--equal space $\no_k(n, \mathbb{R})$ consists of all the elements of $\mathbb{R}^n$ such that no $k$ of the coordinates are equal.  In the remainder of this section, we mimic the proof of Theorem~3.1 of~\cite{Alpert19}, in order to verify that $\no_k(n, \mathbb{R})$ is homotopy equivalent to $\desc(n, k-1)$.

\begin{theorem}\label{thm-nok-cell}
The no--$k$--equal space $\no_k(n, \mathbb{R})$ is homotopy equivalent to the cell complex $\desc(n, k-1)$.
\end{theorem}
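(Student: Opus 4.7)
The strategy is to follow Theorem~3.1 of~\cite{Alpert19}, building a good open cover of $\no_k(n, \mathbb{R})$ indexed by the symbols of $\desc(n, k-1)$ whose nerve equals the face poset of the complex, and then applying the nerve theorem.

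For each symbol $\sigma = B_1 \vert \cdots \vert B_r$ in $\desc(n, k-1)$, I would define $U_\sigma \subseteq \no_k(n, \mathbb{R})$ to be the set of configurations $(x_1, \ldots, x_n)$ such that, writing $d_i = \max_{j, j' \in B_i} \abs{x_j - x_{j'}}$ for the diameter of block $i$ and $g_i = \min_{j \in B_{i+1}} x_j - \max_{j \in B_i} x_j$ for the gap between blocks $i$ and $i+1$, we have $g_i > 0$ for every $i$ and $\max_i d_i < \min_i g_i$. This condition is open, and because each block has at most $k-1$ elements, coincidences among coordinates within a block are allowed, so $U_\sigma \subseteq \no_k(n, \mathbb{R})$. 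The sets cover $\no_k(n, \mathbb{R})$: given any configuration, group the coordinates into the finest ordered partition that separates coordinates with sufficiently different values, with the threshold tuned so that no part has size $k$; this yields a symbol of $\desc(n, k-1)$ to which the configuration belongs. Each $U_\sigma$ deformation retracts linearly onto the standard configuration $x_j = i$ for all $j \in B_i$, so it is contractible.

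The combinatorial heart of the argument is the nerve condition: a family $U_{\sigma_1}, \ldots, U_{\sigma_p}$ has nonempty, contractible intersection exactly when $\sigma_1, \ldots, \sigma_p$ are all faces of a common cell of $\desc(n, k-1)$. A configuration in $U_\sigma \cap U_\tau$ simultaneously exhibits the block structure of $\sigma$ and that of $\tau$, forcing them to have a common coarsening $\rho$ in $\desc(n, k-1)$ (with $\sigma, \tau \leq \rho$); conversely, for any such $\rho$, a linear interpolation between standard configurations produces a nonempty contractible intersection. The descending-order convention on $\desc(n, k-1)$ is essential here: within a cluster of (possibly coincident) points on the line there is no meaningful linear order among the labels, so each ordered set partition of $[n]$ with parts of size less than $k$ corresponds to a single symbol, matching the intersection combinatorics exactly. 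The nerve theorem then yields that $\no_k(n, \mathbb{R})$ is homotopy equivalent to the order complex of the face poset of $\desc(n, k-1)$, and hence to $\desc(n, k-1)$ itself.

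The main obstacle is calibrating the definition of $U_\sigma$ carefully enough that $U_\sigma \cap U_\tau$ is empty precisely when $\sigma$ and $\tau$ admit no common coarsening in $\desc(n, k-1)$, and that all nonempty intersections are genuinely contractible rather than merely connected. This balancing is already the crux of Alpert's analogous argument in~\cite{Alpert19} for disks in a strip, and the translation to the no-$k$-equal setting should in fact be simpler, since there is no vertical direction to account for and the descending-order quotient handles the interior of each cluster cleanly.
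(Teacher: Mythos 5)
Your definition of $U_\sigma$ matches the paper's: your two conditions ($g_i>0$ for all $i$, and $\max_i d_i < \min_i g_i$) are equivalent to the paper's requirement that the blocks appear in order on the line and that every within-block distance be smaller than every between-block distance, so that part of the setup is fine, and the cover, contractibility, and nerve-theorem outline are all sound. The error is in your nerve criterion. You assert that $U_{\sigma_1}\cap\cdots\cap U_{\sigma_p}$ is nonempty exactly when the $\sigma_i$ all admit a \emph{common coarsening} in $\desc(n,k-1)$. That is false, and it is also not the criterion you need: for the nerve to be the barycentric subdivision (hence homotopy equivalent to $\desc(n,k-1)$), the intersection must be nonempty exactly when the $\sigma_i$ form a \emph{chain} under the face relation.

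To see the failure concretely, take $n=4$, $k=3$, and the $2$--cell $\rho = 2\ 1\ \vert\ 4\ 3$ in $\desc(4,2)$. Two of its $1$--dimensional faces are $\alpha_1 = 2\ 1\ \vert\ 3\ \vert\ 4$ and $\alpha_2 = 1\ \vert\ 2\ \vert\ 4\ 3$; they are incomparable but have the common coarsening $\rho$. Membership in $U_{\alpha_1}$ forces $\abs{x_1 - x_2} < x_4 - x_3$ (the pair $\{1,2\}$ is in one block while $3$ and $4$ are in different blocks), and membership in $U_{\alpha_2}$ forces $x_4 - x_3 < x_2 - x_1$ (the pair $\{3,4\}$ is in one block while $1$ and $2$ are separated). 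These contradict each other, so $U_{\alpha_1}\cap U_{\alpha_2} = \emptyset$ even though a common coarsening exists. Thus your ``converse'' direction is wrong, and a linear interpolation between the two standard configurations leaves both open sets en route. The paper instead proves (its Lemma~\ref{lem-poset}) that the intersection is nonempty if and only if the $\sigma_i$ form a chain; the hard direction is the construction, for any chain, of a specific point lying in all the corresponding open sets, using block widths that are distinct powers of $3$ so that each successive coarsening is witnessed. This combinatorial construction is the real content of the proof, and your proposal does not supply a substitute for it. Incidentally, your final paragraph says the calibration goal is exactly the common-coarsening equivalence, which is the wrong target; the right target is the chain equivalence, and the powers-of-$3$ construction is what achieves it.
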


Given a symbol $\alpha$ of $\desc(n, k-1)$, we let $U_\alpha$ be the subset of $\mathbb{R}^n$ consisting of all points $(x_1, \ldots, x_n)$ with the following properties:
\begin{itemize}
\item Whenever two numbers $k$ and $\ell$ are in different blocks of $\alpha$ with $k$ appearing before $\ell$, we have $x_k < x_\ell$.
\item Whenever two numbers $k$ and $\ell$ are in the same block, and $k'$ and $\ell'$ are in different blocks, we have
\[\abs{x_k - x_\ell} < \abs{x_{k'} - x_{\ell'}}.\]
\end{itemize}
The sets $U_\alpha$ are open and convex in $\mathbb{R}^n$, and their union as $\alpha$ ranges over all the symbols in $\desc(n, k-1)$ is equal to $\no_k(n, \mathbb{R})$.

The nerve $N$ of the open cover $U_\alpha$ is the simplicial complex built by taking one vertex for each $\alpha$ and a simplex for each collection of open sets $U_\alpha$ that have a nonempty intersection.  Because the sets $U_\alpha$ are convex, any intersection of them is either empty or contractible.  Thus, the nerve theorem says that $\no_k(n, \mathbb{R})$ is homotopy equivalent to the nerve $N$.  The next lemma implies that $N$ is equal to the barycentric subdivision of $\desc(n, k-1)$.

\begin{lemma}\label{lem-poset}
An intersection
\[U_{\alpha_1} \cap U_{\alpha_2} \cap \cdots \cap U_{\alpha_r}\]
is nonempty if and only if the cells corresponding to $\alpha_1, \alpha_2, \ldots, \alpha_r$ form a chain under the incidence relation in $\desc(n, k-1)$.
\end{lemma}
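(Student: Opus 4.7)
The plan is to prove the two directions separately, both by relating the pairwise distances $|x_k - x_\ell|$ of a point $p = (x_1, \ldots, x_n)$ to the block structure of each symbol $\alpha$.

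For the $(\Leftarrow)$ direction, suppose $\alpha_1 < \alpha_2 < \cdots < \alpha_r$ is a chain, so $\alpha_r$ is the coarsest symbol and each $\alpha_{i+1}$ is obtained from $\alpha_i$ by merging adjacent blocks. I would construct a point $p \in \bigcap_i U_{\alpha_i}$ by hierarchical placement: first place the blocks of $\alpha_r$ at well-separated positions in their prescribed order, then within each $\alpha_r$-block place the $\alpha_{r-1}$-sub-blocks at a strictly finer scale, and continue recursively down to $\alpha_1$. If the scale shrinks by a sufficiently small factor at each level, then for every $i$ the within-$\alpha_i$-block distances will be strictly smaller than the between-$\alpha_i$-block distances, so the second condition defining $U_{\alpha_i}$ holds; the block-order condition holds automatically because each layer is placed in positional order.

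For the $(\Rightarrow)$ direction, suppose $p \in \bigcap_i U_{\alpha_i}$, and for each $i$ let $S_i$ denote the set of unordered pairs $\{k, \ell\}$ that share a block of $\alpha_i$. The second defining condition of $U_{\alpha_i}$ yields
\[\max_{\{k,\ell\} \in S_i}\abs{x_k - x_\ell} \;<\; \min_{\{k',\ell'\} \notin S_i}\abs{x_{k'} - x_{\ell'}},\]
so there is a separating threshold $t_i$. A quick comparison of thresholds then shows that for any $i, j$ one of $S_i \subseteq S_j$ or $S_j \subseteq S_i$ must hold, and hence the underlying partitions $P_1, \ldots, P_r$ are totally ordered by refinement. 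The first defining condition of $U_{\alpha_i}$ forces elements in different blocks of $\alpha_i$ to be positioned in $\mathbb{R}$ according to block order, so each block of $P_i$ occupies a single interval of positions, and whenever $P_i$ refines $P_j$ the coarsening merges only blocks that are consecutive in the $\alpha_i$-ordering. This promotes the refinement relation between partitions to the face relation in $\desc(n, k-1)$ (with block sizes automatically bounded by $k-1$ and descending order within blocks automatic), so the full collection of $\alpha_i$ forms a chain.

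The constructive direction is straightforward once the hierarchy of scales is chosen correctly; the main subtlety lies in the $(\Rightarrow)$ direction, specifically in checking that nestedness of the pair-sets $S_i$ really upgrades to the face relation in $\desc(n, k-1)$. That upgrade needs both defining properties of $U_\alpha$ — the distance-hierarchy condition produces the refinement of partitions, while the position-ordering condition rules out the pathological possibility that a coarsening merges two non-adjacent blocks of $\alpha_i$, and so is the step that really uses that the configuration space is one-dimensional.
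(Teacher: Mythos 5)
Your proposal is correct and takes essentially the same approach as the paper. For nonemptiness the paper builds the configuration bottom-up from $\alpha_1$, assigning widths that are distinct powers of $3$ to each successive merge—the same hierarchical-scale idea as your top-down construction—and for the chain direction your threshold $t_i$ plays exactly the role of the paper's radius $\rho$ (the paper sweeps $\rho$ and clusters by connected components of intervals), after which both arguments invoke the position-ordering condition of $U_\alpha$ to upgrade nested partitions to the face relation in $\desc(n, k-1)$.
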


\begin{proof}
Let $p = (p_1, \dots, p_n)$ be an element of $U_{\alpha_1} \cap U_{\alpha_2} \cap \cdots \cap U_{\alpha_r}$.  We can find the set of all $U_\alpha$ containing $p$ in the following way.  Given any real number $\rho$, we can draw the closed interval of length $\rho$ centered at each $p_1, \ldots, p_n \in \mathbb{R}$, and take the union of these intervals in $\mathbb{R}$.  Then we can cluster the indices $1, \ldots, n$ according to which connected component of the union the points $p_1, \ldots, p_n$ fall into.  Reading off these clusters from left to right, and ordering the indices within each cluster in descending order, we obtain a symbol $\alpha(\rho)$ associated to $p$ and $\rho$, as in Figure~\ref{fig-grow}.  Then the symbols $\alpha(\rho)$ for various $\rho$ form a chain under incidence in $\desc(n, k-1)$, and $p \in U_\alpha$ if and only if $\alpha = \alpha(\rho)$ for some $\rho$.  Thus, $\alpha_1, \alpha_2, \ldots, \alpha_r$ must all be part of this chain, and so they must also form a chain.

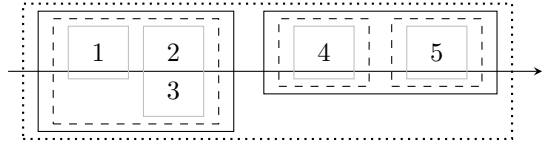
\begin{figure}
\begin{center}
\begin{tikzpicture}[scale=1, emp/.style={inner sep = 0pt, outer sep = 0pt}, >=stealth]
\draw[->] (-.2, 0)--(6.9, 0);
\node[emp] at (1, 0) [label=above:{$1$}] {};
\node[emp] at (2, 0) [label=above:{$2$}] [label=below:{$3$}] {};
\node[emp] at (4, 0) [label=above:{$4$}] {};
\node[emp] at (5.5, 0) [label=above:{$5$}] {};

\draw[gray!50] (.6, -.1)--(1.4, -.1)--(1.4, .6)--(.6, .6)--cycle (1.6, -.6)--(2.4, -.6)--(2.4, .6)--(1.6, .6)--cycle (3.6, -.1)--(4.4, -.1)--(4.4, .6)--(3.6, .6)--cycle (5.1, -.1)--(5.9, -.1)--(5.9, .6)--(5.1, .6)--cycle;
\draw[dashed] (.4, -.7)--(2.6, -.7)--(2.6, .7)--(.4, .7)--cycle (3.4, -.2)--(4.6, -.2)--(4.6, .7)--(3.4, .7)--cycle (4.9, -.2)--(6.1, -.2)--(6.1, .7)--(4.9, .7)--cycle;
\draw (.2, -.8)--(2.8, -.8)--(2.8, .8)--(.2, .8)--cycle (3.2, -.3)--(6.3, -.3)--(6.3, .8)--(3.2, .8)--cycle;
\draw[dotted, thick] (0, -.9)--(6.5, -.9)--(6.5, .9)--(0, .9)--cycle;

\end{tikzpicture}
\end{center}
\caption{Given a configuration $p$ in $\no_k(n, \mathbb{R})$, the set of symbols $\alpha$ such that $p \in U_\alpha$ forms a totally ordered chain in $\desc(n, k-1)$.  The configuration pictured is in $U_\alpha$ for $\alpha = 1 \ \vert\ 3\ 2\ \vert\ 4\ \vert\ 5$, $3\ 2\ 1\ \vert\ 4\ \vert\ 5$, $3\ 2\ 1\ \vert\ 5\ 4$, and $5\ 4\ 3\ 2\ 1$.}\label{fig-grow}
\end{figure}

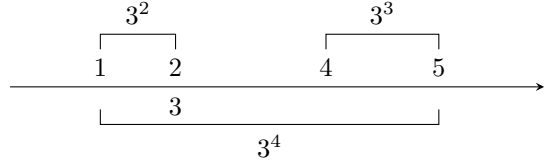
\begin{figure}
\begin{center}
\begin{tikzpicture}[scale=1, emp/.style={inner sep = 0pt, outer sep = 0pt}, >=stealth]
\draw[->] (-.2, 0)--(6.9, 0);
\node[emp] at (1, 0) [label=above:{$1$}] {};
\node[emp] at (2, 0) [label=above:{$2$}] [label=below:{$3$}] {};
\node[emp] at (4, 0) [label=above:{$4$}] {};
\node[emp] at (5.5, 0) [label=above:{$5$}] {};

\draw (1, .5)--(1, .7)--(2, .7)--(2, .5) (4, .5)--(4, .7)--(5.5, .7)--(5.5, .5) (1, -.3)--(1, -.5)--(5.5, -.5)--(5.5, -.3);
\node[emp] at (1.5, .7) [label=above:{$3^2$}] {};
\node[emp] at (4.75, .7) [label=above:{$3^3$}] {};
\node[emp] at (3.25, -.5) [label=below:{$3^4$}] {};
\end{tikzpicture}
\end{center}
\caption{Given a chain of symbols in $\desc(n, k-1)$, such as $1\ \vert\ 3\ 2\ \vert\ 4\ \vert\ 5 \prec 3\ 2\ 1\ \vert\ 4\ \vert\ 5 \prec 3\ 2\ 1\ \vert\ 5\ 4 \prec 5\ 4\ 3\ 2\ 1$, we space the numbers $1$ through $n$ in $\mathbb{R}$ such that the intervals in $\mathbb{R}$ formed by the various blocks are consecutive powers of $3$.  The resulting element of $\no_k(n, \mathbb{R})$ is in $U_\alpha$ for each $\alpha$ in the chain.}\label{fig-pow3}
\end{figure}

For the converse, suppose that $\alpha_1, \alpha_2, \ldots, \alpha_r$ form a chain in $\desc(n, k-1)$.  We need to produce a point $p$ in $U_{\alpha_1} \cap \cdots \cap U_{\alpha_r}$.  Without loss of generality, we assume that the chain is maximal in $\desc(n) = \desc(n, n)$ and that $\alpha_1, \ldots, \alpha_r$ are in order, so $\alpha_1$ has only blocks of size $1$, and getting to each symbol $\alpha_i$ from the previous symbol $\alpha_{i-1}$ corresponds to merging two consecutive blocks.  We start with $\alpha_1$ and add restrictions on the coordinates $(p_1, \ldots, p_n)$ one step at a time, so that on the $i$th step we will have fixed the differences between coordinates within each block of $\alpha_i$, but we think of the separate blocks sliding freely from side to side.  After all the steps, we will have specified the configuration $(p_1, \ldots, p_n)$ up to translating every coordinate by the same real number.

More precisely, at step $1$ we require that if $k$ appears before $\ell$ in $\alpha_1$, then $p_{k} < p_{_\ell}$, with no other restrictions.  Any such configuration is in $U_{\alpha_1}$.  At step $2$, two consecutive elements in $\alpha_1$ together form a block of size $2$, and we introduce the restriction that their coordinates have difference $9=3^2$.  Then, continuing in the same way, at step $i$ two consecutive blocks $c_{k_i}$ and $c_{k_{i}+1}$ in $\alpha_{i-1}$ merge to give $\alpha_i$.  We introduce the restriction that the difference in coordinates between the first element of $c_{k_i}$ and the last element of $c_{k_{i}+1}$---where ``first'' and ``last'' are still taken in terms of the first symbol $\alpha_1$---should be $3^i$.  Figure~\ref{fig-pow3} depicts this process of selecting the widths of the blocks.

Any configuration that satisfies the restrictions up through step $i$ and also leaves horizontal gaps larger than $3^i$ between the blocks of $\alpha_i$ is in $U_{\alpha_i}$.  Note that step $i$ sets the gap between blocks $c_{k_i}$ and $c_{k_{i}+1}$ of $\alpha_{i-1}$ to be more than $3^{i-1}$, which is what we need in order for the final configuration to be in $U_{\alpha_{i-1}}$.  This is because, if we use the word ``width'' here to mean the range of coordinates, the widths of $c_{k_i}$ and $c_{k_{i}+1}$ have been set to be distinct powers of $3$ less than $3^i$, or to be $0$ if the block has only one element.  Thus, the gap has size at most $3^i - (3^{i-1} + 3^{i-2}) > 3^{i-1}$.

In the final step, step $n$ means merging two blocks to get $\alpha_{n}$ which has only one block, and at step $n$ we set the difference between $p_s$ and $p_t$ to be $3^n$, where $s$ is the first (leftmost) number in $\alpha_1$ and $t$ is the last (rightmost) number in $\alpha_1$.  At this stage we have specified the point $p$ up to translation in $\mathbb{R}$, and it is in $U_{\alpha_1} \cap \dots \cap U_{\alpha_n}$.
\end{proof}

The lemma above gives the bulk of the proof that $\no_k(n, \mathbb{R})$ is homotopy equivalent to $\desc(n, k-1)$.

\begin{proof}[Proof of Theorem~\ref{thm-nok-cell}]
The barycentric subdivision of $\desc(n, w)$ has one vertex for every cell in $\desc(n, w)$, and one simplex for every chain of incident cells in $\desc(n, w)$.  Taking $w = k-1$, the nerve $N$ has one vertex for each $U_\alpha$, and thus for each cell in $\desc(n, k-1)$.  And, we have just shown that every set of $U_\alpha$ with nonempty intersection---corresponding to a simplex in $N$---corresponds to a chain of incident cells in $\desc(n, k-1)$, and vice versa.  Thus, $N$ is equal to the barycentric subdivision of $\desc(n, k-1)$.  The nerve lemma states that $N$ is homotopy equivalent to the union of the various $U_\alpha$, which is $\no_k(n, \mathbb{R})$.
\end{proof}

\begin{corollary}\label{cor}
For any $j \geq 0$, the homology group $H_j(\no_{w+1}(n, \mathbb{R}))$ is a direct summand of $H_j(\config(n, w))$.
\end{corollary}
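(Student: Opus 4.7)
The plan is to construct a chain-level retraction $r$ of the subcomplex inclusion $i \colon C_*(\desc(n, w)) \hookrightarrow C_*(\cell(n, w))$. Combined with the homotopy equivalences $\desc(n, w) \simeq \no_{w+1}(n, \mathbb{R})$ (Theorem~\ref{thm-nok-cell}) and $\cell(n, w) \simeq \config(n, w)$ from~\cite{Alpert19}, this will show that $H_j(\no_{w+1}(n, \mathbb{R}))$ is a direct summand of $H_j(\config(n, w))$.

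For each cell $\alpha \in \cell(n, w)$, let $\bar\alpha \in \desc(n, w)$ denote the cell obtained by sorting the elements within each block of $\alpha$ into descending order (preserving the block structure), and let $\epsilon(\alpha) \in \{\pm 1\}$ be the product, over blocks of $\alpha$, of the sign of the permutation sorting each block descending. I would define $r(\alpha) := \epsilon(\alpha)\bar\alpha$ and extend linearly. Since every block of a cell in $\desc(n, w)$ is already in descending order, $r \circ i = \mathrm{id}$ immediately. The main step is to verify that $r$ is a chain map. Because $\overline{g_1 \vert g_2} = \bar g_1 \vert \bar g_2$ and $\epsilon$ is multiplicative under concatenation, the Leibniz rule defining $\del$ reduces the chain map property to the case of a single-block cell $g$, via induction on the number of blocks.

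For such a $g$, the top-dimensional faces are indexed by proper nonempty subsets $A$ of the positions of $g$; the face $f_A$ has first block $g|_A$ and second block $g|_{A^c}$ in the order inherited from $g$, and the coefficient of $f_A$ in $\del g$ is the sign of the shuffle permutation $\tau_A$ from $g$ to $f_A$-without-bar. Letting $\sigma$ be the permutation sorting $g$ to $\bar g$, the bijection $A \mapsto A' := \sigma^{-1}(A)$ on subsets satisfies $\bar f_A = f'_{A'}$ (both cells have first block equal to the descending sort of $\{g_i : i \in A\}$). The remaining sign identity
\[\epsilon(g)\,\mathrm{sgn}(\tau'_{A'}) \;=\; \mathrm{sgn}(\tau_A)\,\epsilon(f_A)\]
follows from the permutation factorization $\sigma \circ \tau'_{A'} = \tau_A \circ \pi_A$, where $\pi_A$ is the within-block descending sort on $f_A$-without-bar: both compositions describe the same rearrangement of $g$ into $\bar f_A$-without-bar, so taking signs and using multiplicativity gives the identity. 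The main obstacle is this sign bookkeeping; once it is in hand, matching coefficients of each face $\bar f_A$ gives $r\del g = \del r g$, hence $r_* \circ i_* = \mathrm{id}$ on homology, and the corollary follows.
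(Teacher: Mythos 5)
Your approach is essentially the same as the paper's: the paper introduces the sorting map $p\colon \cell(n,w) \to \desc(n,w)$ and observes $p \circ i = \mathrm{id}$, giving the retraction on homology. The difference is one of level. The paper treats $p$ as a \emph{topological} cellular map (a homeomorphism on each cell coming from the permutahedron symmetry), so the induced chain map and its chain-map property are automatic and no sign bookkeeping appears. You instead realize the retraction directly at the chain level, defining $r(\alpha) = \epsilon(\alpha)\bar\alpha$, reducing to a single block via the Leibniz rule, and verifying $r\partial = \partial r$ by a combinatorial sign identity. This is a legitimate unpacking, and it makes visible something the paper elides: the \emph{unsigned} assignment $\alpha \mapsto \bar\alpha$ is \emph{not} a chain map (already for $g = 1\,2$, $p(\partial g) = 1\vert 2 - 2\vert 1$ while $\partial\bar g = 2\vert 1 - 1\vert 2$), so the signs $\epsilon(\alpha)$ are genuinely needed if one works purely with cellular chains rather than with a topological map. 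Your outlined sign verification (via $\sigma\circ\tau'_{A'} = \tau_A\circ\pi_A$) is the right identity and does close that gap.
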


\begin{proof}
Let $p \co \cell(n, w) \rightarrow \desc(n, w)$ be the cellular map that sends the cell $\alpha$ to the cell in which the numbers in each block of $\alpha$ are rearranged to be in descending order.  If $i \co \desc(n, w) \rightarrow \cell(n, w)$ is the inclusion map, then $p \circ i$ is the identity on $\desc(n, w)$.  Thus, the induced maps on homology satisfy the relation that $p_* \circ i_*$ is the identity on each $H_j(\desc(n, w))$.  These maps give a way to write $H_j(\desc(n, w))$ as a direct summand of $H_j(\cell(n, w))$, and thus give a way to write $H_j(\no_{w+1}(n, \mathbb{R}))$ as a direct summand of $H_j(\config(n, w))$.
\end{proof}

\section{Discrete gradient vector field}\label{sec-gradient}

In the next two sections, we use discrete Morse theory to compute the homology of the cell complexes $\cell(n, 2)$ and $\desc(n, w)$, which we have shown in the previous section are homotopy equivalent to the configuration spaces $\config(n, 2)$ and $\no_{w+1}(n, \mathbb{R})$.  In any cell complex, the cellular homology comes from a chain complex generated by the cells; very broadly, discrete Morse theory gives a way to decompose the chain complex as a direct sum of a chain complex that has no homology (which we discard) and a chain complex generated by a smaller subset of cells, the critical cells.  This section concerns the reduction to the smaller chain complex, and the next section shows that in fact, in the smaller chain complex all differentials are zero, so the homology has a $\mathbb{Z}$--basis in bijection with the set of critical cells.

The basic definitions in discrete Morse theory are as follows.  In any polyhedral cell complex, we say that cell $f$ is a \textit{\textbf{face}} of cell $g$ if $f$ is in the boundary of $g$ and $\dim f = \dim g - 1$, and we say that $g$ is a \textit{\textbf{coface}} of $f$ if $f$ is a face of $g$.  A \textit{\textbf{discrete vector field}} on a polyhedral cell complex is a set $V$ of pairs of cells $[f, g]$ such that $f$ is a face of $g$ and each cell can be in at most one pair; an example is shown in Figure~\ref{fig-discrete-morse}.  A discrete vector field $V$ is \textit{\textbf{gradient}} if there are no closed $V$--walks.  A $V$--walk is a sequence of pairs $[f_1, g_1], \ldots, [f_r, g_r]$ with $[f_i, g_i] \in V$, such that each $f_{i+1}$ is a face of $g_i$ other than $f_i$.  The $V$--walk is closed if $f_r = f_1$.

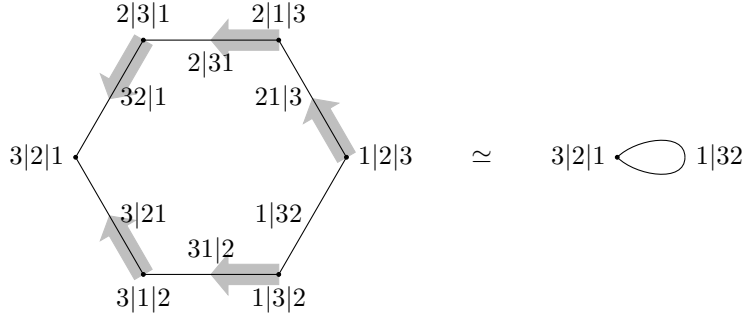
\begin{figure}
\begin{center}
\begin{tikzpicture}[scale = 1.8, emp/.style={inner sep = 0pt, outer sep = 0pt}, vert/.style={circle, draw=black, fill=black, inner sep = 0pt, minimum size = .5mm}]
\draw[draw=gray!50, fill=gray!50] (1.06495000000000, 0.0375000000000000) -- (0.877450000000000, 0.362250000000000) -- (0.942400000000000, 0.399750000000000) -- (0.750000000000000, 0.433000000000000) -- (0.682600000000000, 0.249750000000000) -- (0.747550000000000, 0.287250000000000) -- (0.935050000000000, -0.0375000000000000) --cycle;
\draw[draw=gray!50, fill=gray!50] (0.500000000000000, 0.941000000000000) -- (0.125000000000000, 0.941000000000000) -- (0.125000000000000, 1.01600000000000) -- (0.000000000000000, 0.866000000000000) -- (0.125000000000000, 0.716000000000000) -- (0.125000000000000, 0.791000000000000) -- (0.500000000000000, 0.791000000000000) --cycle;
\draw[draw=gray!50, fill=gray!50] (-0.564950000000000, 0.903500000000000) -- (-0.752450000000000, 0.578750000000000) -- (-0.817400000000000, 0.616250000000000) -- (-0.750000000000000, 0.433000000000000) -- (-0.557600000000000, 0.466250000000000) -- (-0.622550000000000, 0.503750000000000) -- (-0.435050000000000, 0.828500000000000) --cycle;
\draw[draw=gray!50, fill=gray!50] (0.500000000000000, -0.791000000000000) -- (0.125000000000000, -0.791000000000000) -- (0.125000000000000, -0.716000000000000) -- (0.000000000000000, -0.866000000000000) -- (0.125000000000000, -1.01600000000000) -- (0.125000000000000, -0.941000000000000) -- (0.500000000000000, -0.941000000000000) --cycle;
\draw[draw=gray!50, fill=gray!50] (-0.435050000000000, -0.828500000000000) -- (-0.622550000000000, -0.503750000000000) -- (-0.557600000000000, -0.466250000000000) -- (-0.750000000000000, -0.433000000000000) -- (-0.817400000000000, -0.616250000000000) -- (-0.752450000000000, -0.578750000000000) -- (-0.564950000000000, -0.903500000000000) --cycle;
\draw (1, 0)--(.5, .866)--(-.5, .866)--(-1, 0)--(-.5, -.866)--(.5, -.866)--(1, 0);
\node[vert] at (1, 0) [label=right:{$1\vert 2 \vert 3$}] {};
\node[vert] at (.5, .866) [label=above:{$2\vert 1 \vert 3$}] {};
\node[vert] at (-.5, .866) [label=above:{$2\vert 3 \vert 1$}] {};
\node[vert] at (-1, 0) [label=left:{$3\vert 2 \vert 1$}] {};
\node[vert] at (-.5, -.866) [label=below:{$3\vert 1 \vert 2$}] {};
\node[vert] at (.5, -.866) [label=below:{$1\vert 3 \vert 2$}] {};
\node[emp] at (.75, .433) [label=left: {$21\vert 3$}] {};
\node[emp] at (0, .866) [label=below: {$2\vert 31$}] {};
\node[emp] at (-.75, .433) [label=right: {$32\vert 1$}] {};
\node[emp] at (-.75, -.433) [label=right: {$3\vert 21$}] {};
\node[emp] at (0, -.866) [label=above: {$31\vert 2$}] {};
\node[emp] at (.75, -.433) [label=left: {$1\vert 32$}] {};
\node at (2, 0) {$\simeq$};
\node[vert] at (3, 0) [label=left: {$3\vert 2\vert 1$}] {};
\draw (3, 0) to [out=45, in=90] (3.5, 0);
\draw (3, 0) to [out=-45, in=-90] (3.5, 0);
\node[emp] at (3.5, 0) [label=right: {$1\vert 32$}] {};
\end{tikzpicture}
\end{center}
\caption{A \emph{discrete gradient vector field} consists of a set of disjoint pairs of cells, each pair incident and of consecutive dimensions.  The complex is homotopy equivalent to one in which the paired cells are collapsed, and only the \emph{critical} (unpaired) cells remain.}\label{fig-discrete-morse}
\end{figure}

A cell is \textit{\textbf{critical}} with respect to a discrete gradient vector field $V$ if the cell is not in any pair in $V$.  The fundamental theorem of discrete Morse theory~\cite{Forman02} states that there is a cell complex that is a strong deformation retraction of the original cell complex, in which there is one cell per critical cell of $V$.  Thus, we can compute the homology groups $H_j(\cell(n, 2))$ and $H_j(\desc(n, w)$ by defining discrete gradient vector fields and computing the homology of the collapsed chain complexes generated by the critical cells.

One way to define a discrete gradient vector field on a polyhedral cell complex is by defining a total ordering on all the cells.  Given a total ordering, the resulting vector field contains a pair $[f, g]$ if and only if both $f$ is the greatest face of $g$ and $g$ is the least coface of $f$; using the fact that the cell complex is polyhedral, one can prove that this vector field is gradient (see Lemma~3.7 of~\cite{Bauer19}).  In what follows, we define a total ordering on all of $\cell(n)$, the polyhedral complex that contains both $\cell(n, 2)$ and $\desc(n, w)$ as subcomplexes.  We use the resulting discrete gradient vector fields to compute the homology.

To describe the ordering, let $\alpha = \alpha_1 \ \vert\ \alpha_2 \ \vert\ \cdots \ \vert\ \alpha_r$ and $\beta = \beta_1\ \vert\ \beta_2\ \vert\ \cdots \ \vert\ \beta_s$ be symbols in $\cell(n)$.  We say that a block $\alpha_i$ or $\beta_i$ is a \textbf{\textit{singleton}} if it has only one element.  We say that a block $\alpha_i$ is a \textbf{\textit{follower}} if the preceding block $\alpha_{i-1}$ is a singleton less than every element of $\alpha_i$.

\begin{lemma}\label{lem-key}
There is a total ordering $\prec$ on $\cell(n)$ with the following properties.  Suppose that $\alpha$ and $\beta$ first differ at block $i$.  Then, 
\begin{enumerate}
\item If $\alpha_i$ and $\beta_i$ are both followers, if $\beta_i$ has more elements than $\alpha_i$ then $\alpha \prec \beta$.
\item If neither $\alpha_i$ nor $\beta_i$ is a follower, if $\beta_i$ has a lesser first element than $\alpha_i$ then $\alpha \prec \beta$.
\item If neither $\alpha_i$ nor $\beta_i$ is a follower, and $\alpha_i$ and $\beta_i$ have the same first element, if $\beta_i$ has more elements than $\alpha_i$ then $\alpha \prec \beta$.
\item If $\alpha_i$ is a follower and $\beta_i$ is not, then $\alpha \prec \beta$.
\end{enumerate}
\end{lemma}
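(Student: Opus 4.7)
My plan is to define $\prec$ by attaching a \emph{key} $\key(B)$ to each block $B$ (taking its predecessor into account) and then using the lexicographic order on the resulting sequences of block keys. Concretely, for a block $B$ at position $i$ in a symbol $\gamma$, I would set the leading coordinate of $\key(B)$ to $0$ when $B$ is a follower, meaning $\gamma_{i-1}$ is a singleton smaller than every element of $B$, and to $1$ otherwise. The remaining coordinates carry the secondary data: $(|B|, B)$ for followers, and $(-a, |B|, B)$ for non-followers, where $a$ denotes the first element of $B$ and the final entry records the ordered tuple of elements of $B$, included purely as a tiebreaker. Comparing keys lexicographically---with every follower key declared to precede every non-follower key---then automatically yields condition (4) from the leading coordinate, condition (1) from the $|B|$ slot within the follower case, and conditions (2) and (3) from the $-a$ and $|B|$ slots within the non-follower case. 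Finally, I would declare $\alpha \prec \beta$ whenever the first position $i$ at which their blocks differ satisfies $\key(\alpha_i) \prec \key(\beta_i)$.

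Next I would check this recipe produces a genuine total order on $\cell(n)$. Since $\alpha$ and $\beta$ both partition $[n]$, neither sequence of blocks can be a proper prefix of the other, so distinct symbols have a well-defined first differing position $i$ where both possess a block. Because $\alpha_j = \beta_j$ for $j < i$, in particular $\alpha_{i-1} = \beta_{i-1}$, and so the follower test applied to $\alpha_i$ and to $\beta_i$ is based on the same predecessor, which keeps condition (4) coherent. Trichotomy for keys is inherited from trichotomy for tuples under lex order, and the block-content tiebreaker guarantees distinct blocks receive distinct keys; hence $\prec$ is total. Conditions (1)--(4) then follow by direct inspection of the comparison at position $i$.

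The main subtlety is packaging the four heterogeneous rules into one consistent key. The resolution is to stack the follower/non-follower bit as the leading coordinate so that (4) overrides (1)--(3); otherwise, two differing blocks of opposite follower status but comparable sizes or first elements would produce conflicting verdicts. Once that layering is in place, each of (1)--(3) is invoked only when the follower bits agree, and the chosen secondary coordinates make each remaining condition an immediate consequence of lexicographic comparison.
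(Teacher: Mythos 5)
Your proposal is correct and matches the paper's strategy essentially verbatim: both define a per-block key encoding follower status, (negated) first element, and block size, and then compare symbols lexicographically at the first differing block. The only cosmetic difference is that you include the block's element tuple as a final tiebreaker to obtain a total order directly, whereas the paper instead extends the partial lex order arbitrarily to a total order.
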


\begin{proof}
To define the ordering, we first define a ``key'' function that maps each cell to an element of $\bigoplus_{i = 1}^\infty \mathbb{Z}$.  Then we order the symbols lexicographically by key, and extend this partial order arbitrarily to a total order.  For any cell $\alpha$, each block $\alpha_i$ of $\alpha$ contributes two entries to $\key(\alpha)$.  The $(2i-1)$st entry of $\key(\alpha)$ is $n+1$ minus the first element of the block $\alpha_i$ if $\alpha_i$ is not a follower, or $0$ if $\alpha_i$ is a follower.  The $(2i)$th entry of $\key(\alpha)$ is the number of elements in $\alpha_i$.  Past twice the number of blocks, all the entries of $\key(\alpha)$ are zero.

One can verify that the lexicographical ordering of keys has the properties given in the lemma statement.
\end{proof}

This total ordering gives rise to different discrete gradient vector fields on $\cell(n, 2)$ and $\desc(n, w)$.  The next two lemmas describe the set of critical cells for each.  Although each lemma only proves that every critical cell has the properties specified in the lemma, the theorems of the next section imply that the converse is also true.

\begin{lemma}\label{lem-cell-crit}
If a cell in $\cell(n, 2)$ is critical with respect to the discrete gradient vector field that comes from the total ordering from Lemma~\ref{lem-key}, then the cell has the following properties:
\begin{enumerate}
\item Every two consecutive singletons are in decreasing order.
\item If a given $2$--element block has its elements in decreasing order, then the block is a follower.
\end{enumerate}
\end{lemma}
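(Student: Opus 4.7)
The plan is to prove the contrapositive: any cell $\alpha \in \cell(n,2)$ violating (1) or (2) is paired under $V$, hence not critical. Recall that $[f,g] \in V$ iff $f$ is the $\prec$-greatest face of $g$ and $g$ is the $\prec$-least coface of $f$. In $\cell(n,2)$, faces of a cell are obtained by splitting some $2$-block into two adjacent singletons (in either of the two orders), and cofaces by merging two adjacent singletons into a $2$-block (in either order).

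I would construct a single candidate pair per defect. If $\alpha$ has a $2$-block $\alpha_i = xy$ with $x > y$ that is not a follower, let $i$ be the smallest such position, and let $\gamma$ be the face of $\alpha$ obtained by replacing $\alpha_i$ with the singletons $y \,|\, x$ at positions $i, i{+}1$. Equivalently, if $\alpha$ has consecutive singletons $\alpha_i = a, \alpha_{i+1} = b$ with $a < b$ and $i$ is smallest such, the partner is the coface obtained by merging them into the $2$-block $ba$ at position $i$. These descriptions yield the same pairing: one pairs a decreasing non-follower $2$-block with its ``reversed split.'' I would then verify $[\gamma, \alpha] \in V$.

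The verification reduces to two computations with the key function. First, $\gamma$ is the $\prec$-greatest face of $\alpha$. Other faces arise either from the alternative split $x \,|\, y$ of $\alpha_i$, or from splitting a $2$-block at position $j \neq i$. The alternative split $x\,|\,y$ produces a cell whose key at position $2i-1$ equals $n+1-x$, strictly smaller than $\gamma$'s entry $n+1-y$ (since $x > y$). A split at $j > i$ leaves $\alpha_i$ intact, so the key at position $2i-1$ still equals $n+1-x < n+1-y$. A split at $j < i$ requires more care: by minimality of $i$, any $2$-block $\alpha_j$ is either in increasing order or is a follower, and each of clauses (1)-(4) of Lemma~\ref{lem-key} lets us conclude that the resulting face has a key that is $\prec$-less than $\gamma$'s at some position $\leq 2j$. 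Second, by a symmetric argument using minimality of $i$, $\alpha$ is the $\prec$-least coface of $\gamma$: competing merges either reverse the order at position $i$ (producing a larger first-element entry at $2i-1$) or occur at some other position $j \neq i$ (and again one uses the minimality of $i$ together with Lemma~\ref{lem-key} to rule them out).

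The main obstacle I anticipate is the bookkeeping of follower status, since merging or splitting at one position can alter whether a neighboring block is a follower and therefore shift key entries; the edge case where $\alpha_{i-1}$ is itself a singleton (possibly less than the relevant values at position $i$) must be handled carefully to ensure $\gamma_i$ and the alternative cells inherit the follower designations claimed above. The reason this works at all is that by choosing the leftmost defect $i$, any downstream effect of a split or merge occurs at positions $\geq 2i$, after the decisive comparison at $2i-1$ (which involves distinct first-element values $x \neq y$, equivalently $a \neq b$) has already been made; this is the heart of why the greedy leftmost choice yields a consistent matching.
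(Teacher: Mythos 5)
Your proposal takes essentially the same route as the paper: pair a cell at its leftmost defect, by splitting a decreasing non-follower $2$--block into ascending singletons (equivalently, from the face's point of view, merging two ascending singletons into a descending $2$--block), and verify via the key function that this gives the greatest-face/least-coface relationship. The verification sketch, including the reliance on clauses (1)--(4) of Lemma~\ref{lem-key} and the observation that the decisive comparison happens at position $2i-1$, matches the paper's argument.

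One point needs tightening, though. You choose $i$ as the smallest position carrying a type-(2) defect (a decreasing non-follower $2$--block) and separately as the smallest position carrying a type-(1) defect (consecutive increasing singletons), and then claim these ``yield the same pairing.'' They do not in general: a single cell $\alpha$ can have a type-(1) defect at some position $j$ and a type-(2) defect at a different position $i$. If $j < i$, then the face $\gamma$ you get by splitting at $i$ is \emph{not} paired with $\alpha$ --- a coface of $\gamma$ obtained by merging at $j$ in descending order has a strictly smaller key than $\alpha$ at position $2j-1$, so $\alpha$ fails to be the least coface of $\gamma$. The paper avoids this by requiring the split/merge position to be the leftmost defect \emph{of either kind} (``in the string of blocks preceding those, the two conditions for being critical are met''). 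You clearly know this, since your closing paragraph appeals to ``the leftmost defect,'' but the formal choice of $i$ in your second paragraph should be stated that way from the start; with the wording as written, the pairing rule is ambiguous and, read literally, incorrect. A second, minor, inaccuracy: the alternative split $x\,\vert\,y$ at position $i$ need not have key entry $n+1-x$ at position $2i-1$, since the singleton $x$ could become a follower (when $\alpha_{i-1}$ is a singleton with $y < \alpha_{i-1} < x$), in which case that entry is $0$; the conclusion that $\gamma$ wins the comparison is unaffected because $0 < n+1-y$, but the intermediate claim should be stated carefully.
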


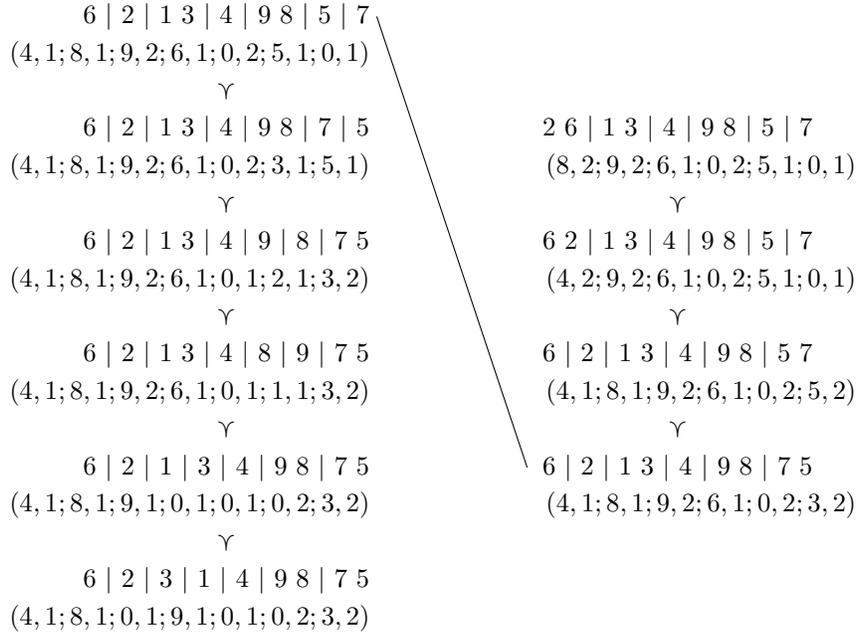
\begin{figure}
\begin{center}
\begin{tikzpicture}[scale = .5, emp/.style={inner sep = 0pt, outer sep = 0pt}]
\node[emp] at (0, 8) {$6\ \vert\ 2\ \vert\ 1\ 3\ \vert\ 4\ \vert\ 9\ 8\ \vert\ 5\ \vert\ 7$};
\node[emp] at (-1, 7) {$(4, 1; 8, 1; 9, 2; 6, 1; 0, 2; 5, 1; 0, 1)$};
\node[emp] at (0, 6) {$\curlyvee$};
\node[emp] at (0, 5) {$6\ \vert\ 2\ \vert\ 1\ 3\ \vert\ 4\ \vert\ 9\ 8\ \vert\ 7\ \vert\ 5$};
\node[emp] at (-1, 4) {$(4, 1; 8, 1; 9, 2; 6, 1; 0, 2; 3, 1; 5, 1)$};
\node[emp] at (0, 3) {$\curlyvee$};
\node[emp] at (0, 2) {$6\ \vert\ 2\ \vert\ 1\ 3\ \vert\ 4\ \vert\ 9\ \vert\ 8\ \vert\ 7\ 5$};
\node[emp] at (-1, 1) {$(4, 1; 8, 1; 9, 2; 6, 1; 0, 1; 2, 1; 3, 2)$};
\node[emp] at (0, 0) {$\curlyvee$};
\node[emp] at (0, -1) {$6\ \vert\ 2\ \vert\ 1\ 3\ \vert\ 4\ \vert\ 8\ \vert\ 9\ \vert\ 7\ 5$};
\node[emp] at (-1, -2) {$(4, 1; 8, 1; 9, 2; 6, 1; 0, 1; 1, 1; 3, 2)$};
\node[emp] at (0, -3) {$\curlyvee$};
\node[emp] at (0, -4) {$6\ \vert\ 2\ \vert\ 1\ \vert\ 3\ \vert\ 4\ \vert\ 9\ 8\ \vert\ 7\ 5$};
\node[emp] at (-1, -5) {$(4, 1; 8, 1; 9, 1; 0, 1; 0, 1; 0, 2; 3, 2)$};
\node[emp] at (0, -6) {$\curlyvee$};
\node[emp] at (0, -7) {$6\ \vert\ 2\ \vert\ 3\ \vert\ 1\ \vert\ 4\ \vert\ 9\ 8\ \vert\ 7\ 5$};
\node[emp] at (-1, -8) {$(4, 1; 8, 1; 0, 1; 9, 1; 0, 1; 0, 2; 3, 2)$};

\node[emp] at (12, 5) {$2\ 6\ \vert\ 1\ 3\ \vert\ 4\ \vert\ 9\ 8\ \vert\ 5\ \vert\ 7$};
\node[emp] at (12.6, 4) {$(8, 2; 9, 2; 6, 1; 0, 2; 5, 1; 0, 1)$};
\node[emp] at (12, 3) {$\curlyvee$};
\node[emp] at (12, 2) {$6\ 2\ \vert\ 1\ 3\ \vert\ 4\ \vert\ 9\ 8\ \vert\ 5\ \vert\ 7$};
\node[emp] at (12.6, 1) {$(4, 2; 9, 2; 6, 1; 0, 2; 5, 1; 0, 1)$};
\node[emp] at (12, 0) {$\curlyvee$};
\node[emp] at (12, -1) {$6\ \vert\ 2\ \vert\ 1\ 3\ \vert\ 4\ \vert\ 9\ 8\ \vert\ 5\ 7$};
\node[emp] at (12.6, -2) {$(4, 1; 8, 1; 9, 2; 6, 1; 0, 2; 5, 2)$};
\node[emp] at (12, -3) {$\curlyvee$};
\node[emp] at (12, -4) {$6\ \vert\ 2\ \vert\ 1\ 3\ \vert\ 4\ \vert\ 9\ 8\ \vert\ 7\ 5$};
\node[emp] at (12.6, -5) {$(4, 1; 8, 1; 9, 2; 6, 1; 0, 2; 3, 2)$};

\draw (4, 8)--(8, -4);
\end{tikzpicture}
\end{center}
\caption{In $\cell(9, 2)$, the cells $f = 6\ \vert\ 2\ \vert\ 1\ 3\ \vert\ 4\ \vert\ 9\ 8\ \vert\ 5\ \vert\ 7$ and $g = 6\ \vert\ 2\ \vert\ 1\ 3\ \vert\ 4\ \vert\ 9\ 8\ \vert\ 7\ 5$ are paired, because $f$ is the greatest among faces of $g$ (shown in left column) and $g$ is the least among cofaces of $f$ (shown in right column).  In the picture, below each symbol appears the corresponding value of the key function.}\label{fig-face-coface}
\end{figure}

\begin{proof}
We describe the pairing on the remaining cells, and then verify that it comes from the total ordering.  An example of two paired cells is shown in Figure~\ref{fig-face-coface}.  Suppose that $f$ is a cell such that there are two consecutive singletons in increasing order, but in the string of blocks preceding those, the two conditions for being critical are met.  Let $g$ be the cell in which those two consecutive singletons are combined such that the resulting $2$--element block has its elements in decreasing order.  Then the discrete vector field contains $[f, g]$.  From the reverse point of view, suppose that $g$ is a cell such that there is a $2$--element block with elements in decreasing order, not immediately preceded by a lesser singleton, such that in the string of blocks preceding this $2$--element block, the two conditions for being critical are met.  Let $f$ be the cell in which this $2$--element block is split into two singletons in increasing order.  Then the discrete vector field contains $[f, g]$.

We need to show that for such a pair $[f, g]$, $f$ is the greatest face of $g$ and $g$ is the least coface of $f$.  To show the former, suppose that $f$ comes from splitting the $k$th block of $g$.  Any face of $g$ that comes from splitting an earlier block is less than $f$, because if the block is ascending it gets shorter and its first entry cannot decrease (properties (2) and (3) of Lemma~\ref{lem-key}), and if the block is descending it is a follower and remains a follower while getting shorter (property (1) of Lemma~\ref{lem-key}).  Because the $k$th block of $g$ is descending but not a follower, any face of $g$ that comes from splitting a later block, or the face that comes from splitting the same block in the other way (i.e., such that the resulting singletons remain in decreasing order), is less than $f$ because the first element of the $k$th block is greater than that of $f$ (property (2) of Lemma~\ref{lem-key}).  Thus $f$ is the greatest face of $g$.

Similarly, suppose that $[f, g]$ is a pair in the discrete vector field and $g$ comes from combining the $k$th and $(k+1)$st blocks of $f$, which must then be ascending singletons.  Any coface of $f$ that comes from combining two earlier blocks is greater than $g$, because the blocks are decreasing singletons, so the first of the two blocks gets longer and its first element cannot increase (properties (2) and (3) of Lemma~\ref{lem-key}).  Any coface of $f$ that comes from combining two later blocks, or from combining the $k$th and $(k+1)$st blocks in the other way (i.e., in ascending order), is greater than $g$ because the $k$th block of that coface has a lesser first element than $g$ and is not a follower (property (2) of Lemma~\ref{lem-key}).  Thus $g$ is the least coface of $f$.

\end{proof}

A similar analysis gives the set of critical cells for $\desc(n, w)$.

\begin{lemma}\label{lem-desc-crit}
If a cell in $\desc(n, w)$ is critical with respect to the discrete gradient vector field that comes from the total ordering from Lemma~\ref{lem-key}, then the cell has the following properties:
\begin{enumerate}
\item Every two consecutive singletons are in decreasing order.
\item Every non-singleton block has $w$ elements and is a follower.
\end{enumerate}
\end{lemma}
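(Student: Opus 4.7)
The plan is to mirror the proof of Lemma~\ref{lem-cell-crit}: describe an explicit involution on the non-critical cells of $\desc(n, w)$ and verify that it agrees with the pairing induced by $\prec$ under the greatest-face / least-coface rule. The new features compared to $\cell(n, 2)$ are that blocks can now have size up to $w$ and that the merging operation must be able to absorb a singleton into a partial follower block of size larger than $1$.

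Given a non-critical $\alpha$, let $i$ be the smallest index at which the prefix $\alpha_1 \mid \cdots \mid \alpha_i$ first fails the critical conditions; exactly one of three situations occurs at $\alpha_i$:
(A) $\alpha_i$ is a singleton and $\alpha_{i-1}$ is a strictly smaller singleton;
(B) $\alpha_i$ is a non-singleton that is not a follower;
(C) $\alpha_i$ is a non-singleton follower with fewer than $w$ elements.
In Cases (A) and (C), pair $\alpha$ with the cell $\beta$ obtained by merging $\alpha_{i-1}$ and $\alpha_i$ into a single descending block of size $\abs{\alpha_i}+1 \leq w$; this lies in $\desc(n, w)$ because $\alpha_{i-1}$ is a singleton smaller than every element of $\alpha_i$. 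In Case (B), writing $\alpha_i = b_1 b_2 \cdots b_k$, pair $\alpha$ with $\beta$ obtained by splitting off the smallest element as an isolated singleton: $\alpha_i \mapsto b_k \mid b_1 b_2 \cdots b_{k-1}$.

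Next, I would verify that the prescription is an involution. In Case (B), $\beta$ places $b_k$ as a non-follower singleton at position $i$ and $b_1 \cdots b_{k-1}$ as a follower non-singleton of size $k-1 < w$ at position $i+1$, so $\beta$'s first failure is the Case (C) (or Case (A) when $k=2$) pattern at $i+1$, and the prescribed merge recovers $\alpha$. Dually, in Cases (A) and (C), the merged cell's first failure is at position $i-1$, where the enlarged block is not a follower (because $\alpha_{i-2}$ was not a singleton smaller than $\alpha_{i-1}$, by criticality of the preceding prefix), so Case (B) applies and the prescribed split returns $\alpha$.

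Finally, I would show the involution coincides with the matching from $\prec$. Splits of any block earlier than the distinguished one produce a smaller key by parts (1)--(3) of Lemma~\ref{lem-key}: singletons cannot be split, and a $w$-element follower block can only shrink while remaining a follower, strictly lowering its size entry at some position $2j$. Among the $2^k - 2$ splits of the distinguished block in Case (B), the choice $b_k \mid b_1 \cdots b_{k-1}$ uniquely maximizes the key because it places $b_k$, the smallest element of the block, as a non-follower singleton, producing the largest possible non-zero entry $n+1-b_k$ at position $2i-1$; any other split either puts a larger first element there or makes the first sub-block a follower with entry $0$. The dual computation gives the least-coface direction for Cases (A) and (C). The main obstacle, as in Lemma~\ref{lem-cell-crit}, is the bookkeeping of follower status under splits of blocks of arbitrary size $2 \leq k \leq w$; once the maximizing split is correctly identified, the remaining key comparisons reduce lexicographically to the arguments already carried out for $\cell(n, 2)$.
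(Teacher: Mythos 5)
Your proposal is correct and takes essentially the same approach as the paper: you identify the same pairing (merge the singleton into the short follower block, or split the smallest element off a non-follower non-singleton block) and justify it by the same greatest-face/least-coface key comparisons. Your explicit three-case split and involution check make the pairing description somewhat cleaner than the paper's; the only small omissions are that your greatest-face argument doesn't explicitly mention splits of blocks \emph{later} than the distinguished one (these are handled by property (2) since the distinguished block then retains its larger first element), and the least-coface direction is asserted by duality rather than spelled out -- both match what the paper does and are easily filled in.
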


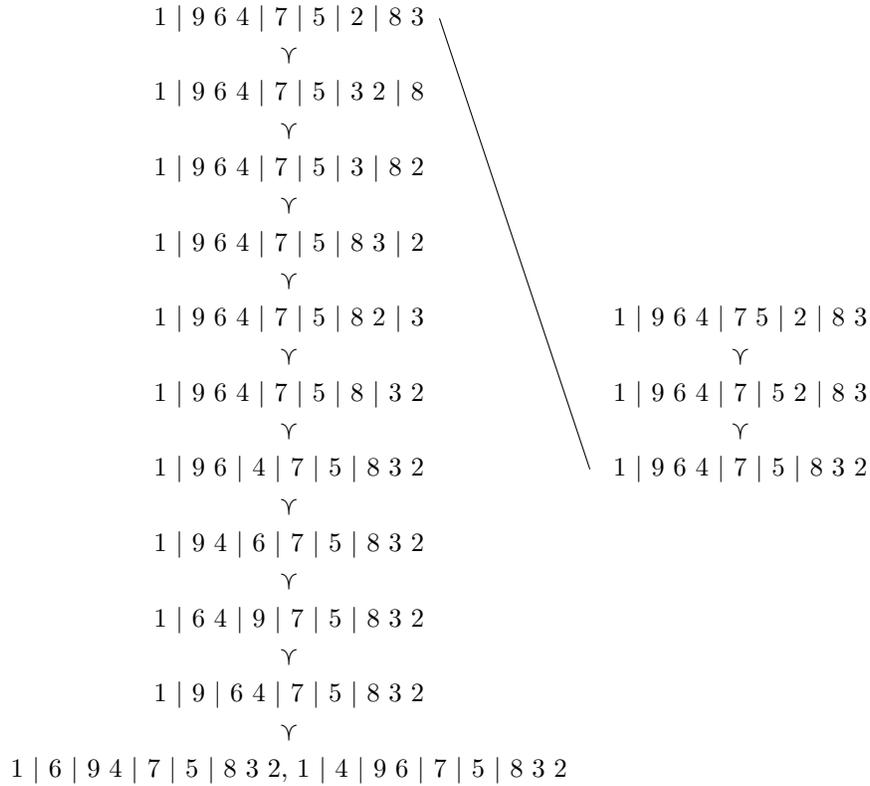
\begin{figure}
\begin{center}
\begin{tikzpicture}[scale = .5, emp/.style={inner sep = 0pt, outer sep = 0pt}]
\node[emp] at (0, 21) {$1\ \vert\ 9\ 6\ 4\ \vert\ 7\ \vert\ 5\ \vert\ 2\ \vert\ 8\ 3$};
\node[emp] at (0, 20) {$\curlyvee$};
\node[emp] at (0, 19) {$1\ \vert\ 9\ 6\ 4\ \vert\ 7\ \vert\ 5\ \vert\ 3\ 2\ \vert\ 8$};
\node[emp] at (0, 18) {$\curlyvee$};
\node[emp] at (0, 17) {$1\ \vert\ 9\ 6\ 4\ \vert\ 7\ \vert\ 5\ \vert\ 3\ \vert\ 8\ 2$};
\node[emp] at (0, 16) {$\curlyvee$};
\node[emp] at (0, 15) {$1\ \vert\ 9\ 6\ 4\ \vert\ 7\ \vert\ 5\ \vert\ 8\ 3\ \vert\ 2$};
\node[emp] at (0, 14) {$\curlyvee$};
\node[emp] at (0, 13) {$1\ \vert\ 9\ 6\ 4\ \vert\ 7\ \vert\ 5\ \vert\ 8\ 2\ \vert\ 3$};
\node[emp] at (0, 12) {$\curlyvee$};
\node[emp] at (0, 11) {$1\ \vert\ 9\ 6\ 4\ \vert\ 7\ \vert\ 5\ \vert\ 8\ \vert\ 3\ 2$};
\node[emp] at (0, 10) {$\curlyvee$};
\node[emp] at (0, 9) {$1\ \vert\ 9\ 6\ \vert\ 4\ \vert\ 7\ \vert\ 5\ \vert\ 8\ 3\ 2$};
\node[emp] at (0, 8) {$\curlyvee$};
\node[emp] at (0, 7) {$1\ \vert\ 9\ 4\ \vert\ 6\ \vert\ 7\ \vert\ 5\ \vert\ 8\ 3\ 2$};
\node[emp] at (0, 6) {$\curlyvee$};
\node[emp] at (0, 5) {$1\ \vert\ 6\ 4\ \vert\ 9\ \vert\ 7\ \vert\ 5\ \vert\ 8\ 3\ 2$};
\node[emp] at (0, 4) {$\curlyvee$};
\node[emp] at (0, 3) {$1\ \vert\ 9\ \vert\ 6\ 4\ \vert\ 7\ \vert\ 5\ \vert\ 8\ 3\ 2$};
\node[emp] at (0, 2) {$\curlyvee$};
\node[emp] at (0, 1) {$1\ \vert\ 6\ \vert\ 9\ 4\ \vert\ 7\ \vert\ 5\ \vert\ 8\ 3\ 2$, $1\ \vert\ 4\ \vert\ 9\ 6\ \vert\ 7\ \vert\ 5\ \vert\ 8\ 3\ 2$};

\node[emp] at (12, 13) {$1\ \vert\ 9\ 6\ 4\ \vert\ 7\ 5\ \vert\ 2\ \vert\ 8\ 3$};
\node[emp] at (12, 12) {$\curlyvee$};
\node[emp] at (12, 11) {$1\ \vert\ 9\ 6\ 4\ \vert\ 7\ \vert\ 5\ 2\ \vert\ 8\ 3$};
\node[emp] at (12, 10) {$\curlyvee$};
\node[emp] at (12, 9) {$1\ \vert\ 9\ 6\ 4\ \vert\ 7\ \vert\ 5\ \vert\ 8\ 3\ 2$};

\draw (4, 21)--(8, 9);
\end{tikzpicture}
\end{center}
\caption{To find the discrete gradient vector field on $\desc(n, w)$, we use the ordering of cells from $\cell(n)$, but the resulting pairing is different.  Pictured are (left column) the faces of $1\ \vert\ 9\ 6\ 4\ \vert\ 7\ \vert\ 5\ \vert\ 8\ 3\ 2$ and (right column) the cofaces of $1\ \vert\ 9\ 6\ 4\ \vert\ 7\ \vert\ 5\ \vert\ 2\ \vert\ 8\ 3$ in $\desc(9, 3)$.}\label{fig-desc-morse}
\end{figure}

\begin{proof}
The pairing on the remaining cells is defined as follows.  Suppose that $f$ is a cell such that there is a singleton immediately followed by a block of size less than $w$, for which every element is greater than the singleton, and in the string of blocks preceding those, the two conditions for being critical are met.  Let $g$ be the cell in which the singleton is combined with the following block.  Then the discrete vector field contains $[f, g]$.  From the reverse point of view, suppose that $g$ is a cell such that there is a non-singleton block that is not preceded by a lesser singleton, and in the string of preceding blocks, the two conditions for being critical are met.  Let $f$ be the cell in which this non-singleton block is split into two blocks, the first of which is the least element as a singleton block.  Then the discrete vector field contains $[f, g]$.

Suppose that $[f, g]$ is in the discrete vector field and $f$ comes from splitting the $k$th block of $g$.  To show that $f$ is the greatest face of $g$, consider the result of splitting any earlier block of $g$.  Because $g$ looks like a critical cell at that stage, that block has size $w$ and is a follower; after splitting, it is shorter and is still a follower, so the key is less than that of $g$ (property (1) of Lemma~\ref{lem-key}); in contrast, among ways to split the $k$th block another way, or to split a later block, $f$ is the greatest because it is the only one for which the $k$th block begins with that least element of the $k$th block of $g$ (property (2) of Lemma~\ref{lem-key}).

To show that $g$ is the least coface of $f$, consider the result of combining any earlier blocks of $f$.  Because $f$ looks like a critical cell at that stage, the two blocks would be non-follower singletons in decreasing order, so the combined block would be larger, not be a follower, and have the same first element as the first of the two singletons, giving a greater key than that of $f$ (property (3) of Lemma~\ref{lem-key}); in contrast, among ways to combine later blocks of $f$, $g$ is the least because it is the only one that increases the first element of the $k$th block (property (2) of Lemma~\ref{lem-key}).
\end{proof}

\section{Basis for homology}\label{sec-basis}

In the previous section we constructed discrete gradient vector fields on $\cell(n, 2)$ and $\desc(n, w)$ and described their critical cells.  In this section we construct $\mathbb{Z}$--bases for $H_*(\cell(n, 2))$ and $H_*(\desc(n, w))$ with one basis cycle per critical cell.

The following general lemma shows that it suffices to construct, for each critical cell $e$, a cycle $z(e)$ such that $e$ is its maximum cell and has coefficient $\pm 1$.  Then the rest of the section is devoted to the construction.

\begin{lemma}\label{lem-max-basis}
Let $X$ be any finite polyhedral cell complex with a total ordering on the cells, giving a discrete gradient vector field.  Suppose that for each critical cell $e$, there is a cycle $z(e)$ such that $e$ has coefficient $\pm 1$ in $z(e)$ and is the greatest cell appearing with nonzero coefficient in $z(e)$.  Then every homology class in $H_*(X)$ can be written uniquely as a $\mathbb{Z}$--linear combination of the homology classes of the cycles $z(e)$.
\end{lemma}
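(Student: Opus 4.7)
The plan is to establish spanning of $H_*(X)$ by the $[z(e)]$ and then show these classes form a $\mathbb{Z}$-basis.

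For spanning, I induct on the total order of the maximum cell of a cycle $w$. For nonzero $w$ with maximum cell $e^*$ of coefficient $a$: if $e^*$ is critical, I subtract $a\epsilon\, z(e^*)$, where $\epsilon=\pm 1$ is the coefficient of $e^*$ in $z(e^*)$, yielding a cycle of strictly smaller maximum. If $e^*$ is the lower cell of a pair $[e^*,g]\in V$, I subtract $a\delta\,\del g$ with $\delta=\pm 1$ the incidence $[e^*:g]$; this modifies $w$ by a boundary, and since $e^*$ is the greatest face of $g$ every other cell of $\del g$ lies below $e^*$, so the maximum strictly decreases. The case $e^*$ upper in a pair $[f,e^*]\in V$ cannot occur for a cycle: $e^*$ is the least coface of $f$, so no other coface of $f$ appears in $w$, and the coefficient of $f$ in $\del w$ is then $\pm a\neq 0$, contradicting $\del w=0$. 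Iteration gives $w=\sum_{e} c_e z(e)+\del u$.

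For linear independence I plan to use Forman's discrete Morse theory, which identifies $H_*(X)$ with the Morse complex $C^V_*$---free abelian with one generator $[e]$ per critical cell---and supplies a chain-level projection $\pi\co C_*(X)\to C^V_*$ that is the identity on critical cells and zero on paired cells. Applied to $z(e)$, this gives
\[
\pi(z(e))=\pm[e]+\sum_{\substack{e'<e,\;e'\text{ critical}\\ \dim e'=\dim e}} a_{e,e'}\,[e'],
\]
because $z(e)$ has $e$ as maximum cell with coefficient $\pm 1$ and all other cells are below $e$. Ordering the critical cells of each dimension by $<$, the matrix $M$ whose columns are the $\pi(z(e))$'s is lower triangular with diagonal entries $\pm 1$, hence $\mathbb{Z}$-invertible. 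Since $\pi$ is a chain map and each $z(e)$ is a cycle, each $\pi(z(e))$ is a cycle in $C^V_*$; the invertibility of $M$ then forces every generator $[e]$ to be a cycle, so the Morse differential vanishes. Consequently $H_*(X)\cong C^V_*$ is free abelian of rank equal to the number of critical cells in each dimension, and the $[z(e)]$---related to the basis $\{[e]\}$ by the invertible matrix $M$---form a $\mathbb{Z}$-basis of $H_*(X)$, giving both parts of the conclusion. The main obstacle is invoking Forman's projection $\pi$ with the right properties; once that is in hand the lower-triangularity argument is transparent.
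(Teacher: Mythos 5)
Your spanning argument matches the paper's own: induct on the maximum cell $e^*$ of a cycle $w$, subtract a multiple of $z(e^*)$ if $e^*$ is critical or of $\del g$ if $[e^*,g]\in V$, and rule out $e^*$ being the upper cell of a pair because its partner $f$ would then survive with nonzero coefficient in $\del w$. That half is fine.

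The linear-independence half has a genuine gap: the map $\pi$ you invoke (identity on critical cells, zero on paired cells) is \emph{not} a chain map to the Morse complex, so the step ``since $\pi$ is a chain map, each $\pi(z(e))$ is a cycle in $C^V_*$'' is unjustified. A minimal counterexample: one edge $g$ with endpoints $a,b$, the pair $[a,g]\in V$, and $b$ critical; then $\pi(\del g)=\pm b\neq 0=\del^V\pi(g)$. Forman's theory does supply a chain quasi-isomorphism to the Morse complex, but it is built from the stabilized gradient flow $\Phi^\infty=(\mathrm{id}+\del V+V\del)^\infty$ and sends a paired cell to a sum of critical cells reached by $V$-paths, not to zero. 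With the correct $\pi$ the identity $\pi(e)=[e]$ on critical cells still holds, but your triangularity claim $\pi(z(e))=\pm[e]+\sum_{e'<e}a_{e,e'}[e']$ would need the further lemma that the flow carries any cell $c$ only to critical cells $\leq c$ in the ordering---plausible for a vector field coming from a total order, but you neither state nor prove it, and it does not follow merely from $e$ being the maximum cell of $z(e)$. The paper bypasses Forman's projection entirely and stays at the chain level: setting $z'(f):=\del g$ for each pair $[f,g]$, it observes that in each dimension the chains $z(e)$ (over critical cells), $z'(f)$ (over lower cells of pairs), and $g$ (over upper cells of pairs) have distinct maxima with coefficient $\pm 1$ and hence form a $\mathbb{Z}$-basis of the chain group; applying $\del$ kills the $z(e)$ and $z'(f)$ and sends each $g$ to its $z'(f)$, so every boundary is a $\mathbb{Z}$-combination of the $z'(f)$ alone, which are independent of the $z(e)$ by the same distinct-maxima reasoning. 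That elementary triangularity is what your argument is missing.
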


\begin{proof}
For any pair $[f, g]$ in the discrete vector field, we refer to $f$ as a ``match-up cell'' and refer to $g$ as a ``match-down cell''.  We also define $z'(f)$ to be the boundary of $g$; we know that $f$ is the greatest cell appearing in $z'(f)$, and that it has coefficient $\pm 1$ because the original complex $X$ is polyhedral.

First, we show that every $j$--cycle $z$ is a $\mathbb{Z}$--linear combination of cycles $z(e)$ and $z'(f)$, where $e$ ranges over the critical $j$--cells and $f$ ranges over the match-up $j$--cells.  This follows from the following observation: if a match-down cell $g$ is the greatest cell in a $j$--chain, then in the boundary of that chain, the corresponding match-up cell $f$ appears with nonzero coefficient, because $g$ is the least coface of $f$, so no other cell in the chain has $f$ as a face.  Thus, for any $j$--cycle $z$, the greatest cell of $z$ cannot be a match-down cell.  It is either a critical cell $e$ or a match-up cell $f$, so we subtract the appropriate multiple of $z(e)$ or $z'(f)$ to get a new cycle with lesser maximum.  Repeating this process gives us $z$ as a linear combination of cycles $z(e)$ and $z'(f)$, so because each $z'(f)$ is a boundary, this implies that $z$ is homologous to a linear combination of the cycles $z(e)$ only.

To show the uniqueness, we need to show that no nontrivial linear combination of cycles $z(e)$ is null-homologous.  Because the cycles $z(e)$ and $z'(f)$ have distinct maxima, they are linearly independent.  Thus, it suffices to show that if a $j$--cycle $z$ is a boundary, it is a linear combination of the boundaries $z'(f)$.  To see this, we look at the set of all $j+1$--chains.  The chains $z(e)$, $z'(f)$, and $g$ (as $e$ ranges over all critical $(j+1)$--cells, $f$ ranges over all match-up $(j+1)$--cells, and $g$ ranges over all match-up $(j+1)$--cells) form a $\mathbb{Z}$--basis for the set of all $(j+1)$--chains, because they have distinct maxima equal to the set of all $j$--cells.  When we apply the boundary map to this basis, the cycles $z(e)$ and $z'(f)$ map to zero, and the match-down cells $g$ map to the $j$--dimensional boundaries $z'(f)$.  Thus indeed every $j$--dimensional boundary is a linear combination of these boundaries $z'(f)$.

Thus, every homology class in $H_*(X)$ can be written as a $\mathbb{Z}$--linear combination of the homology classes of the cycles $z(e)$, and the combination is unique.
\end{proof}

In both $\cell(n, 2)$ and $\desc(n, w)$, each critical cell $e$ consists of a sequence of larger blocks, with descending sequences of singletons in between, and each larger block has a rigidly specified form.  The construction of the cycle $z(e)$ is based on this structure.  To make this precise, we define a bilinear concatenation product of chains in the following way.

%%% MUCH earlier, we had to do signs: define injected cell, boundary map commutes with injections, concatenation product of cells.

%Given any chain (a $\mathbb{Z}$--linear combination of cells) in $\cell(n)$, we may select an injection on $[n]$ and apply that injection to each number appearing in the symbols in the chain.  We think of this process as relabeling the disks, because in the case of $\cell(n, 2)$, the numbers in the symbols correspond to labels of the disks in $\config(n, 2)$.  In the special case of an injection into $[n]$, the result is the $S_n$--action on the cellular chain complex.
%
%If we apply an injection to a single cell, we refer to the result as an \textit{\textbf{injected cell}}, and if we apply an injection to a cycle, we refer to the result as an \textit{\textbf{injected cycle}}.  It is useful to restrict our attention to the order-preserving injections into the natural numbers, because then the boundary maps and the total ordering on $\cell(n)$ commute with applying the injections.
%
%If two injected cells $f_1$ and $f_2$ have disjoint sets of labels (i.e., disjoint images of the injections), then we can take the \textit{\textbf{concatenation product}} $f_1 \vert f_2$, which is the injected cell that results from concatenating the two symbols with a bar in between.  

In Section~\ref{sec-complex} we have defined injected cell and concatenation product of injected cells: given two cells with disjoint sets of labels, we can write the two cells with a vertical bar between them.  Just as we can apply an injection on $[n]$ to a cell in $\cell(n)$ (that is, we relabel the disks), we can also apply an injection to a $\mathbb{Z}$--linear combination of cells.  Applying injections commutes with taking boundary maps, and applying an injection to a cycle gives an \textit{\textbf{injected cycle}}.  If two injected cycles $z_1 = \sum_{i} \alpha_{1, i} f_{1, i}$ and $z_2 = \sum_{j} \alpha_{2, j} f_{2, j}$ have disjoint labels, then we define their concatenation product to be
\[z_1 \vert z_2 = \sum_{i, j} \alpha_{1, i}\alpha_{2, j} \cdot f_{1, i} \vert f_{2, j},\]
which the Leibniz rule from Section~\ref{sec-complex} implies is also an injected cycle.

We typically restrict our attention to order-preserving injections on $[n]$; the total ordering on $\cell(n)$ does not respect arbitrary injections, but it does respect order-preserving injections.  Every critical cell $e$ in $\cell(n, 2)$ can be written uniquely as a concatenation product of some number of images of the cells $1$, $1\ 2$, and $1\ \vert\ 3\ 2$ under order-preserving injections, and every critical cell in $\desc(n, w)$ can be written uniquely as a concatenation product of images of cells $1$ and $1\ \vert\ (w+1)\ w \ \cdots\ 3\ 2$ under order-preserving injections; we refer to these images as \textit{\textbf{irreducible}} critical injected cells.  We can associate a cycle $z(e)$ to each critical cell $e$ by first doing so on the irreducibles.  We set $z(1) = 1$ and $z(1\ 2) = 1\ 2 + 2\ 1$.  We set $z(1\ \vert\ 3\ 2)$ to be the boundary in $\cell(3)$ of the cell $3\ 2\ 1$, which is $z(1\ \vert\ 3\ 2) = 1\ \vert\ 3\ 2 + 3\ 1\ \vert\ 2 + 3\ \vert\ 2\ 1 - 3\ 2\ \vert\ 1 - 2\ \vert\ 3\ 1 - 2\ 1\ \vert\ 3$.  We set $z(1\ \vert\ (w+1)\ w\ \cdots\ 3\ 2)$ to be the boundary in $\cell(w+1)$ of the cell $(w+1)\ w\ \cdots\ 3\ 2\ 1$.  

\begin{figure}
\begin{center}
\begin{tikzpicture}[scale=.8, emp/.style={inner sep = 0pt, outer sep = 0pt}, >=stealth]
\draw (-.25, 0)--(3.8, 0);
\draw (-.25, 2)--(3.8, 2);

\node (d3) at (1, .55) {$3$};
\node (d5) at (1+.5, .55+.866) {$5$};
\node (d2) at (1-.5, .55+.866) {$2$};
\node (d4) at (3, .5) {$4$};
\node (d1) at (3, 1.5) {$1$};

\draw (d3) circle (.5);
\draw (d5) circle (.5);
\draw (d2) circle (.5);
\draw (d4) circle (.5);
\draw (d1) circle (.5);

\draw[->] (1.53033008588991, 0.596669914110089) arc (-45:-15:.75);
\draw[->] (1.19411428382689, 1.85144436971680) arc (75:105:.75);
\draw[->] (0.275555630283199, 0.932885716173110) arc (195:225:.75);

\draw[->] (3+.52, .7) arc (-30:30:.6);
\draw[->] (3-.52, 1.3) arc (150:210:.6);
\end{tikzpicture}\end{center}
\caption{The cycle $z(2\ \vert\ 5\ 3\ \vert\ 1\ 4)$ is defined as the concatenation product $z(2\ \vert\ 5\ 3)\ \vert\ z(1\ 4)$, the result of putting the cycles $z(2\ \vert\ 5\ 3)$ and $z(1\ 4)$ side by side in the strip.}\label{fig-concatenation}
\end{figure}
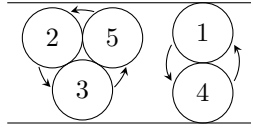

By requiring $z$ to commute with order-preserving injections on the labels and with taking concatenation products---i.e., $z(f_1 \vert f_2) = z(f_1) \vert z(f_2)$, as in Figure~\ref{fig-concatenation}---we obtain a definition of $z$ for all critical cells in $\cell(n, 2)$ and in $\desc(n, w)$.

Next we check the hypothesis of Lemma~\ref{lem-max-basis}.

\begin{lemma}\label{lem-crit-max}
Let $e$ be any critical cell of $\cell(n, 2)$ or $\desc(n, w)$.  Then in the cycle $z(e)$, the cell $e$ has coefficient $\pm 1$, and it is the greatest cell that appears in $z(e)$ with nonzero coefficient.
\end{lemma}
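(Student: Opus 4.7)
The plan is to induct on the number of irreducible factors in the decomposition of $e$. Since $z$ respects concatenation by construction, $z(e_1 \vert \cdots \vert e_r) = z(e_1) \vert \cdots \vert z(e_r)$, I only need to verify the claim for the irreducibles and then show that concatenation preserves both assertions of the lemma.

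For the base cases I would handle each irreducible type directly. For $z(1) = 1$ and $z(1\ 2) = 1\ 2 + 2\ 1$, inspection suffices; in the latter, $\key(1\ 2) = (n, 2) > (n-1, 2) = \key(2\ 1)$. For $z(1 \vert 3\ 2)$ and (in $\desc(n,w)$) $z(1 \vert (w+1)\,w\cdots 2)$, the cycle is the boundary in $\cell(w+1)$ of the single-block top cell $(w+1)\cdots 1$, whose terms are precisely its codimension-one faces, each appearing with coefficient $\pm 1$. Among those faces, $e$ is the unique one whose first block is the singleton $1$, so $\key(e)$ begins $(n, 1, 0, \ldots)$ and strictly dominates every other face's key already in the first two entries.

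For the inductive step, write $e = e' \vert e_r$ with $e_r$ the rightmost irreducible factor, and assume the claim for both $z(e')$ and $z(e_r)$. Two structural observations will drive the argument. First, every cell of $z(e')$ has the same number of blocks as $e'$ (visible in each base case, and preserved by concatenation), and similarly for $z(e_r)$. Second, for any cell $f_1 \vert f_2$ in $z(e') \vert z(e_r)$, the first $2 b(e')$ entries of $\key(f_1 \vert f_2)$ equal $\key(f_1)$, because a block's follower status in the concatenated symbol depends only on its immediate predecessor, and for a block of $f_1$ that predecessor lies inside $f_1$. The coefficient of $e$ in $z(e)$ is $(\pm 1)(\pm 1) = \pm 1$, since the term $e' \vert e_r$ arises only from the pair $(e', e_r)$ in the concatenation expansion: disjointness of the label sets forces a unique decomposition of any concatenation $f_1 \vert f_2$. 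For maximality, if $f_1 \neq e'$ then by the induction hypothesis and the second observation the two concatenated keys differ at some position $p \leq 2 b(e')$ with $\key(e' \vert e_r)_p > \key(f_1 \vert f_2)_p$. If $f_1 = e'$ but $f_2 \neq e_r$, the first $2 b(e')$ entries coincide and the comparison reduces to $f_2$ versus $e_r$, up to a single possible modification at the first-block follower-status entry (determined by the fixed last block of $e'$), which a short case analysis on $e_r$'s irreducible type shows does not reverse the induction hypothesis.

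The main obstacle will be this follower-status modification in the $f_1 = e'$ case. When $e'$ ends in a singleton $v$ smaller than every element of the first block of $e_r$, and simultaneously of some competing cell $f_2$ in $z(e_r)$, both first-block follower entries are promoted to $0$, and the lex keys of $e' \vert e_r$ and $e' \vert f_2$ can coincide (for instance, when $e_r$ is an injection of $1\ 2$). Such ties must be broken by the arbitrary extension of the lex partial order to the total order $\prec$ of Lemma~\ref{lem-key}; I would check that this extension can be chosen consistently, across all critical cells $e$, so that each $e$ lies above its lex-tied competitors within its own cycle $z(e)$. The compatibility of these tie-breaking requirements follows from the uniqueness of the irreducible decomposition of each critical cell, which prevents conflicting demands.
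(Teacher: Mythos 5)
Your overall strategy (induction on the number of irreducible factors, direct verification of the base cases, key-comparison in the inductive step) is the same as the paper's, and your structural observations — that every cell of $z(e')$ has $b(e')$ blocks, that the first $2b(e')$ key entries of $f_1\vert f_2$ are $\key(f_1)$, and that the coefficient of $e$ comes from the unique decomposition — are all correct and track the paper's argument. Where you genuinely differ is in the handling of the case $f_1=e'$, and here you are actually \emph{more} careful than the printed proof. The paper's parenthetical claim that ``the characterization of critical cells implies that the first block of $e_2$ cannot be a follower in $e$'' is false: for $e = 1\,\vert\, 2\;3$ in $\cell(3,2)$, the irreducible decomposition is $(1)\vert(2\;3)$, the first block of $e_2 = 2\;3$ \emph{is} a follower in $e$, and indeed $\key(1\,\vert\,2\;3) = \key(1\,\vert\,3\;2) = (3,1,0,2)$, so the two cells of $z(e)$ are tied in the lexicographic key order. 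You correctly identified exactly this phenomenon (``when $e_r$ is an injection of $1\;2$'').

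The weak point of your proposal is the last step. Noticing that the lemma forces a constraint on the otherwise arbitrary extension of the key order to a total order $\prec$ is right, but ``uniqueness of the irreducible decomposition\ldots prevents conflicting demands'' is not by itself a proof that a consistent tie-break exists: you still need to show the imposed relations $e \succ f$ (for $f$ a tied competitor of $e$ in $z(e)$) form an acyclic directed graph. That can be done — every such $f$ is obtained from $e$ by reversing some ascending $2$-element follower blocks into descending ones, so the number of descending $2$-blocks strictly increases along any chain of demands, and one also checks that $e \notin z(f)$ because $z(u\,\vert\, b\;a)$ contains no cell of the form $u\,\vert\, a\;b$; moreover these ties never arise in $\desc(n,w)$ — but this argument needs to be supplied, and your appeal to uniqueness of the factorization does not substitute for it. So: same approach as the paper, a sharper observation than the paper at the problem point, but the proposed repair is stated, not proved.
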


\begin{proof}
First we show the statement where $e$ is an irreducible critical injected cell.  In our ordering, $1\ 2$ is greater than $2\ 1$, so $1\ 2$ is the greatest cell of $z(1\ 2)$.  In any $\cell(n)$, every coefficient in the boundary of any cell is $\pm 1$ (or $0$).  And, in our ordering, the greatest face of $(w+1)\ w\ \cdots\ 3\ 2\ 1$ is $1\ \vert\ (w+1)\ w\ \cdots\ 3\ 2$, which is the only face that begins with $1$ (property (3) of Lemma~\ref{lem-key}).  Using $w = 2$ this applies to $z(1\ \vert\ 3\ 2)$ also, so the lemma statement is true whenever $e$ is an irreducible critical injected cell.

Next we consider concatenation products and apply induction on the number of irreducibles.  Suppose that $e = e_1 \ \vert\ e_2$, where $e_1$ and $e_2$ are critical (that is, they are the images of critical cells under order-preserving injections), and suppose that the lemma statement is true for both $e_1$ and $e_2$.  By definition, the coefficient of $e$ in $z(e_1 \ \vert\ e_2) = z(e_1)\ \vert\ z(e_2)$ is the product of the coefficient of $e_1$ in $z(e_1)$ and the coefficient of $e_2$ in $z(e_2)$, so the coefficient is $\pm 1$.  Let $f_1$ be any cell appearing in $z(e_1)$, and let $f_2$ be any cell appearing in $z(e_2)$.  We need to show that $e = e_1 \ \vert\ e_2$ is at least as great as $f_1\ \vert\ f_2$, knowing that $e_1 \succeq f_1$ and $e_2 \succeq f_2$.  Indeed, if $e_1 \succ f_1$, then by the properties in Lemma~\ref{lem-key} we know that $e_1 \ \vert\ e_2 \succ f_1\ \vert\ f_2$ no matter what $f_2$ is.  And, if $f_1 = e_1$, then the fact that $e_2 \succ f_2$ implies that $e_1 \ \vert\ e_2 \succ e_1 \vert\ f_2$.  (Note that the characterization of critical cells implies that the first block of $e_2$ cannot be a follower in $e$.  The first block of $f_2$ may become a follower in $e_1\ \vert\ f_2$, in which case we use property (4) from Lemma~\ref{lem-key}.)

By induction on the number of irreducibles, $e$ is the greatest cell in $z(e)$ whether or not it is irreducible.
\end{proof}

Putting together Lemmas \ref{lem-cell-crit}, \ref{lem-desc-crit}, \ref{lem-max-basis}, and~\ref{lem-crit-max}, we have proved the following theorem.

\begin{theorem}\label{thm-basis}
A basis for $H_*(\cell(n, 2))$ is given by the classes of the cycles $z(e)$, where $e$ ranges over all cells with the following properties:
\begin{enumerate}
\item Every two consecutive singletons are in decreasing order.
\item If a given $2$--element block has its elements in decreasing order, then the block is a follower.
\end{enumerate}
A basis for $H_*(\desc(n, w))$ is given by the classes of the cycles $z(e)$, where $e$ ranges over all cells with the following properties:
\begin{enumerate}
\item Every two consecutive singletons are in decreasing order.
\item Every non-singleton block has $w$ elements and is a follower.
\end{enumerate}
\end{theorem}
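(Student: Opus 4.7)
The plan is to assemble the four preceding lemmas. First I would argue that the critical cells of the discrete gradient vector fields on $\cell(n,2)$ and $\desc(n,w)$ are \emph{exactly} the cells satisfying the properties listed in the theorem. Lemmas~\ref{lem-cell-crit} and~\ref{lem-desc-crit} already give the ``only if'' direction. For the converse, I would appeal to the explicit pairings constructed in the proofs of those lemmas: scanning the blocks of any noncompliant cell from left to right and locating the first violation of property (1) or (2) canonically assigns the cell a partner, and these partners are precisely the pairs produced by the total ordering $\prec$. Since the discrete vector field induced by $\prec$ is unique, any cell that participates in no such pair must be critical, so a cell is critical if and only if it satisfies the listed properties.

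Next, Lemma~\ref{lem-crit-max} supplies, for each critical cell $e$, a cycle $z(e)$ in which $e$ appears with coefficient $\pm 1$ and is the $\prec$-maximum cell; this is exactly the hypothesis of Lemma~\ref{lem-max-basis}. Invoking Lemma~\ref{lem-max-basis} then produces a $\mathbb{Z}$-basis for $H_*(\cell(n,2))$ and for $H_*(\desc(n,w))$ consisting of the homology classes $[z(e)]$ with $e$ critical. Combined with the identification of critical cells from the first step, this delivers both assertions of the theorem.

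The main obstacle is the converse direction in the identification of critical cells, since Lemmas~\ref{lem-cell-crit} and~\ref{lem-desc-crit} are stated in only one direction. I would revisit their proofs and verify that the pairing rules exhibited there genuinely cover every cell that violates any listed property---a noncompliant cell either has two ascending consecutive singletons, forcing a pair-up by merging them into a descending $2$-block, or it has a non-follower descending block that is split back into ascending singletons. In practice this is a routine bookkeeping check: one walks through the blocks of the cell until the first violation and confirms that the resulting pair partner coincides with the one determined by the greatest-face/least-coface rule from the ordering $\prec$. Everything else in the argument is a direct citation of the preceding lemmas.
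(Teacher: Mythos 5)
Your proposal follows the paper's own (very terse) proof: identify the critical cells with the cells satisfying the listed properties, then apply Lemmas~\ref{lem-crit-max} and~\ref{lem-max-basis}. You correctly flag that Lemmas~\ref{lem-cell-crit} and~\ref{lem-desc-crit} give only one inclusion. But the argument you offer for the converse does not actually close the gap. The proofs of those lemmas verify that each described pair $[f,g]$ of non-compliant cells satisfies the greatest-face/least-coface condition, which shows the described pairs lie \emph{inside} the gradient vector field $V$ induced by $\prec$. It does not show that $V$ contains no further pairs. Your sentence ``since the discrete vector field induced by $\prec$ is unique, any cell that participates in no such pair must be critical'' does not follow: uniqueness of $V$ does not rule out a pair consisting of two compliant cells. (It does rule out a compliant cell paired with a non-compliant one, since the non-compliant cells are already all matched among themselves.) Your proposed bookkeeping --- scan to the first violation and confirm the pair partner --- applies only to non-compliant cells; a compliant cell has no ``first violation'' to scan to, so the check never touches it.

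To close the gap you need a separate argument that no cell satisfying the properties lies in any pair. One can do this directly (verify that a compliant $e$ is never the greatest face of its least coface, nor the least coface of its greatest face), but a slicker route uses tools already at hand. The concatenation-product construction of $z(e)$ and the proof of Lemma~\ref{lem-crit-max} work verbatim for \emph{any} cell satisfying the listed properties, not just for a priori critical ones; so each such $e$ is the maximum cell of a cycle $z(e)$. The proof of Lemma~\ref{lem-max-basis} observes that the maximum cell of a cycle is never a match-down cell, so $e$ is not match-down. If $e$ were match-up with partner $g$, then $g$ would have to be compliant (otherwise $g$'s partner in the described pairing would be a non-compliant cell distinct from $e$, violating that $V$ is a matching), and $g$ would then be a compliant match-down cell that is also the maximum of the cycle $z(g)$ --- a contradiction. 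Hence every cell with the listed properties is critical, and the rest of your proposal goes through.
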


\section{Generating an $\FI_d$--module}\label{sec-abstract}

In this section we prove a general lemma about $\FI_d$--modules.  If we want to prove that a given sequence of abelian groups is an $\FI_d$--module, many verifications are needed: we need to specify a group homomorphism for each morphism in $\FI_d$, and we need to prove that compositions that are equal in $\FI_d$ give equal group homomorphisms.  To streamline such a proof, we can write every morphism as a composition of permutations and what we call high-insertion maps, which correspond to the various inclusions $[n] \hookrightarrow [n+1]$.  Lemma~\ref{lem-general-fid} below states which compatibility properties we need to check, in order for the permutations and high-insertion maps to specify an $\FI_d$--module.

First we review the precise definition of $\FI_d$--module, from~\cite{Ramos17}.  The category $\FI_d$ has one object $[n] = \{1, \ldots, n\}$ for each natural number $n$.  The morphisms are pairs $(\varphi, c)$, where $\varphi$ is an injection, say, from $[n]$ to $[m]$, and $c$ is a $d$--coloring on the complement of the image of $\varphi$; that is, $c$ is a map from $[m] \setminus \varphi([n])$ to a set of size $d$, which in this paper we choose to be $\{0, 1, \ldots, d-1\}$.  The morphisms compose as illustrated in Figure~\ref{fig-compose}: for each element colored by the first morphism, in the composition, the image of that element under the second morphism is the one that gets that color.  (In the picture, the color of a given element is shown in a diamond just above the element.)  More formally, if $(\varphi, c) \co [n_1] \rightarrow [n_2]$ and $(\varphi', c') \co [n_2] \rightarrow [n_3]$ are two morphisms, then we have
\[(\varphi', c') \circ (\varphi, c) = (\varphi' \circ \varphi, c''),\]
where $c''(i)$ is equal to $c'(i)$ if $i \not\in \varphi'([n_2])$, and is equal to $c(\varphi'^{-1}(i))$ if $i \in \varphi'([n_2])$.

\begin{figure}
\begin{center}
\begin{tikzpicture} [scale=.5, emp/.style={inner sep = 0pt, outer sep = 0pt}, >=stealth]
\node[emp] at (1, 7) {$1$};
\node[emp] at (2, 7) {$2$};
\node[emp] at (3, 7) {$3$};
\draw (.5, 6.5)--(3.5, 6.5)--(3.5, 7.5)--(.5, 7.5)--cycle;
\draw (1.5, 6.5)--(1.5, 7.5) (2.5, 6.5)--(2.5, 7.5);

\draw[->] (1, 6.5)--(3, 4.5);
\draw[->] (2, 6.5)--(2, 4.5);
\draw[->] (3, 6.5)--(4, 4.5);
\draw (1, 4.5)--(1.5, 5)--(1, 5.5)--(.5, 5)--cycle;
\node[emp] at (1, 5) {$2$};

\node[emp] at (1, 4) {$1$};
\node[emp] at (2, 4) {$2$};
\node[emp] at (3, 4) {$3$};
\node[emp] at (4, 4) {$4$};
\draw (.5, 3.5)--(4.5, 3.5)--(4.5, 4.5)--(.5, 4.5)--cycle;
\draw (1.5, 3.5)--(1.5, 4.5) (2.5, 3.5)--(2.5, 4.5) (3.5, 3.5)--(3.5, 4.5);

\draw[->] (1, 3.5)--(2, 1.5);
\draw[->] (2, 3.5)--(1, 1.5);
\draw[->] (3, 3.5)--(4, 1.5);
\draw[->] (4, 3.5)--(5, 1.5);
\draw (3, 1.5)--(3.5, 2)--(3, 2.5)--(2.5, 2)--cycle;
\node[emp] at (3, 2) {$1$};

\node[emp] at (1, 1) {$1$};
\node[emp] at (2, 1) {$2$};
\node[emp] at (3, 1) {$3$};
\node[emp] at (4, 1) {$4$};
\node[emp] at (5, 1) {$5$};
\draw (.5, .5)--(5.5, .5)--(5.5, 1.5)--(.5, 1.5)--cycle;
\draw (1.5, .5)--(1.5, 1.5) (2.5, .5)--(2.5, 1.5) (3.5, .5)--(3.5, 1.5) (4.5, .5)--(4.5, 1.5);

%%%

\node[emp] at (6, 4) {$=$};

\node[emp] at (7+1, 5.5) {$1$};
\node[emp] at (7+2, 5.5) {$2$};
\node[emp] at (7+3, 5.5) {$3$};
\draw (7+.5, 5)--(7+3.5, 5)--(7+3.5, 6)--(7+.5, 6)--cycle;
\draw (7+1.5, 5)--(7+1.5, 6) (7+2.5, 5)--(7+2.5, 6);

\draw[->] (7+1, 5)--(7+4, 3);
\draw[->] (7+2, 5)--(7+1, 3);
\draw[->] (7+3, 5)--(7+5, 3);
\draw (7+2, 3)--(7+2.5, 3.5)--(7+2, 4)--(7+1.5, 3.5)--cycle;
\node[emp] at (7+2, 3.5) {$2$};
\draw[fill=white] (7+3, 3)--(7+3.5, 3.5)--(7+3, 4)--(7+2.5, 3.5)--cycle;
\node[emp] at (7+3, 3.5) {$1$};

\node[emp] at (7+1, 2.5) {$1$};
\node[emp] at (7+2, 2.5) {$2$};
\node[emp] at (7+3, 2.5) {$3$};
\node[emp] at (7+4, 2.5) {$4$};
\node[emp] at (7+5, 2.5) {$5$};
\draw (7+.5, 2)--(7+5.5, 2)--(7+5.5, 3)--(7+.5, 3)--cycle;
\draw (7+1.5, 2)--(7+1.5, 3) (7+2.5, 2)--(7+2.5, 3) (7+3.5, 2)--(7+3.5, 3) (7+4.5, 2)--(7+4.5, 3);
\end{tikzpicture}
\end{center}
\caption{To compose two morphisms in $\FI_d$, we have $(\varphi', c') \circ (\varphi, c) = (\varphi' \circ \varphi, c'')$, where $c''(i)$ is equal to $c'(i)$ if $i$ is not in the image of $\varphi'$ (for instance, $i = 3$ has color $1$ in the example shown) and is equal to $c(\varphi'^{-1}(i))$ if $i$ is in the image of $\varphi'$ (for instance, $i = 2$ has color $2$ in the composition because $c(1) = 2$ and $\varphi'(1) = 2$).}\label{fig-compose}
\end{figure}
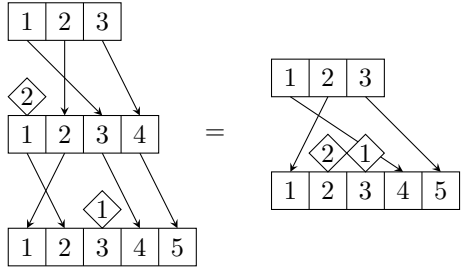

An \textit{\textbf{$\FI_d$--module}} $M$ over a commutative ring $k$ is defined to be a functor from $\FI_d$ to $k$--modules; that is, we have a $k$--module $M_n$ for each $n$, and for each $(\varphi, c) \co [n] \rightarrow [m]$, we have a corresponding $k$--module map $(\varphi, c)_* \co M_n \rightarrow M_m$.  In the present paper we use $k = \mathbb{Z}$.  An $\FI_d$--module is \textit{\textbf{finitely generated}} if there exists a finite set of elements $x_1, \ldots, x_r \in \bigsqcup_{n = 1}^\infty M_n$ such that the only $\FI_d$--submodule of $M$ containing $x_1, \ldots, x_r$ is $M$ itself.

Any $\FI_d$--module is determined by the permutation action on each $M_n$, along with the $d$ different maps from $M_n$ to $M_{n+1}$ that correspond to taking the inclusion from $[n]$ into $[n+1]$ and coloring the element $n+1$ each of the $d$ different colors.  We refer to these latter maps as the \textit{\textbf{high-insertion maps}}.  Notationally, we denote by $[i_k] \co M_n \rightarrow M_{n+1}$ the $k$th high-insertion map, which colors element $n+1$ with the color $k$.  We denote by $[\sigma] \co M_n \rightarrow M_n$ the permutation map corresponding to a permutation $\sigma \in S_n$.

In order for a choice of permutation action and high-insertion maps to correspond to an $\FI_d$--module, we need to check some compatibility properties.  We say that ``high-insertion maps commute with permutations'' if for every color $k$, every $n$, and every $\sigma \in S_n$, we have
\[[i_k] \circ [\sigma] = [\widetilde{\sigma}] \circ [i_k],\]
where $\widetilde{\sigma} \in S_{n+1}$ fixes the element $n+1$ and permutes the other elements according to $\sigma$.  We say that ``insertions are unordered'' if for every pair of colors $k, \ell$ and every $n$, we have the following relation of maps from $M_n$ to $M_{n+2}$:
\[[(n+1\ \ n+2)] \circ [i_k] \circ [i_\ell] = [i_\ell] \circ [i_k].\]
Here $(n+1 \ \ n+2)$ denotes the permutation in $S_{n+2}$ that transposes the greatest two elements.  The following lemma says that checking these two properties is enough to define an $\FI_d$--module.

\begin{lemma}\label{lem-general-fid}
Suppose we have modules $M_n$, with $S_n$--actions on the various $M_n$ and $d$ high-insertion maps from each $M_n$ to $M_{n+1}$.  If ``high-insertion maps commute with permutations'' and ``insertions are unordered'', then the compositions of these maps form an $\FI_d$--module.
\end{lemma}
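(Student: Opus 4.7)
The plan is to define the image $(\varphi, c)_*$ of a general morphism via a canonical decomposition into a sequence of high-insertions followed by a single permutation, and then verify that this assignment is a functor.

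Given $(\varphi, c)\co [n] \to [m]$ in $\FI_d$, let $t_1 < t_2 < \cdots < t_r$ enumerate $[m] \setminus \varphi([n])$ in increasing order, where $r = m - n$. Let $\pi \in S_m$ be defined by $\pi(i) = \varphi(i)$ for $i \leq n$ and $\pi(n+k) = t_k$ for $1 \leq k \leq r$. I propose to set
\[(\varphi, c)_* \;:=\; [\pi] \circ [i_{c(t_r)}] \circ \cdots \circ [i_{c(t_1)}].\]
The $r$ high-insertions lift $M_n$ successively to $M_m$, placing the $k$th new element at the top, and $[\pi]$ then rearranges so that old element $i$ lands at position $\varphi(i)$ and the $k$th inserted element lands at position $t_k$. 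When $\varphi$ is a bijection ($r = 0$), this recovers the prescribed $S_n$-action.

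Before checking functoriality I would establish a reordering lemma: for any colors $k_1, \ldots, k_r$ and any $\sigma \in S_r$,
\[[\widetilde{\sigma}] \circ [i_{k_r}] \circ \cdots \circ [i_{k_1}] \;=\; [i_{k_{\sigma^{-1}(r)}}] \circ \cdots \circ [i_{k_{\sigma^{-1}(1)}}],\]
where $\widetilde{\sigma} \in S_{n+r}$ acts by $\sigma$ on the top $r$ coordinates and fixes $[n]$. This is proved by induction on the length of $\sigma$ as a word in adjacent transpositions: the hypothesis ``insertions are unordered'' gives the base case of a top transposition, while ``high-insertion maps commute with permutations'' lets one slide a lower adjacent transposition past any insertion introduced above it, reducing to the previous case. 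This reordering lemma is the workhorse: it says that any sequence of insertions can be performed in any order, provided the final permutation compensates.

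Functoriality on identity morphisms is immediate. For composition, I take $(\varphi_1, c_1)\co [n_1] \to [n_2]$ and $(\varphi_2, c_2)\co [n_2] \to [n_3]$, write each via its canonical decomposition, and concatenate to obtain a composite of the form $[\pi_2]\circ (\text{insertions for } \varphi_2)\circ [\pi_1] \circ (\text{insertions for } \varphi_1)$. Using ``high-insertion maps commute with permutations'' iteratively, I slide $[\pi_1]$ (extended to act trivially on the newly inserted coordinates) past all second-batch insertions and combine it with $[\pi_2]$ into a single trailing permutation. Then the reordering lemma sorts all insertions so that their ultimate target positions in $[n_3]$ appear in increasing order, absorbing the sorting permutation into the trailing one. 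What remains is a combinatorial identification: the resulting inserted elements are exactly the enumeration of $[n_3] \setminus (\varphi_2 \circ \varphi_1)([n_1])$ in increasing order, their colors match the composition formula $c''$ from the excerpt, and the trailing permutation matches $\pi$ for $(\varphi_2 \circ \varphi_1, c'')$. Once these combinatorial matches are in hand, the composite agrees with $(\varphi_2\circ\varphi_1, c'')_*$ by construction.

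The main obstacle is the bookkeeping in the final step. One must carefully track how each inserted element splits according to whether it lies in $\varphi_2([n_2]\setminus\varphi_1([n_1]))$ (first inserted by $c_1$, then carried by $\varphi_2$) or in $[n_3]\setminus\varphi_2([n_2])$ (directly inserted by $c_2$), verify that these match the two cases in the definition $c''(i) = c(\varphi_2^{-1}(i))$ or $c''(i) = c_2(i)$, and check that after sorting by target position one recovers the canonical $\pi$ of the composite. This is a careful indexing argument rather than a conceptual difficulty, and once it is pinned down the lemma follows.
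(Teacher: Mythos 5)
Your proposal is correct and follows essentially the same route as the paper: both define $(\varphi,c)_*$ via the canonical decomposition into high-insertions followed by a single order-preserving permutation, and both reduce functoriality to the statement that permuting the order of a sequence of insertions (compensated by a permutation fixing $[n]$) gives the same map. The only cosmetic difference is that you organize this last step as a standalone reordering lemma proved by induction on the length of $\sigma$ as a word in adjacent transpositions, whereas the paper inducts on $m-n$ by bubbling the insertion destined for position $n+1$ to the rightmost slot; these amount to the same calculation.
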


\begin{proof}
For each morphism $(\varphi, c)$ in $\FI_d$, we need to define a map $(\varphi, c)_*\co M_n \rightarrow M_m$.  We already have a definition when $\varphi$ is a permutation, that is, when $(\varphi, c) = (\sigma, \cdot)$, where $\sigma \in S_n$ and $\cdot$ denotes an empty coloring.  In this case $(\sigma, \cdot)_* = [\sigma]$.  The high-insertion maps describe what happens when $\varphi$ is the inclusion map from $[n]$ to $[n+1]$, that is, when $(\varphi, c) = (i, \ n+1 \mapsto k)$, where $i \co [n] \hookrightarrow [n+1]$ is the inclusion.  In this case we set $(i, \ n+1\mapsto k)_* = [i_k]$.

\begin{figure}
\begin{center}
\begin{tikzpicture} [scale=.5, emp/.style={inner sep = 0pt, outer sep = 0pt}, >=stealth]
\node[emp] at (1, 5) {$1$};
\node[emp] at (2, 5) {$2$};
\node[emp] at (3, 5) {$3$};
\draw (.5, 4.5)--(3.5, 4.5)--(3.5, 5.5)--(.5, 5.5)--cycle;
\draw (1.5, 4.5)--(1.5, 5.5) (2.5, 4.5)--(2.5, 5.5);

\draw[->] (1, 4.5)--(1, 3.5);
\draw[->] (2, 4.5)--(2, 3.5);
\draw[->] (3, 4.5)--(3, 3.5);
\draw (4, 3.5)--(4.5, 4)--(4, 4.5)--(3.5, 4)--cycle;
\node[emp] at (4, 4) {$2$};
\draw (5, 3.5)--(5.5, 4)--(5, 4.5)--(4.5, 4)--cycle;
\node[emp] at (5, 4) {$1$};

\node[emp] at (1, 3) {$1$};
\node[emp] at (2, 3) {$2$};
\node[emp] at (3, 3) {$3$};
\node[emp] at (4, 3) {$4$};
\node[emp] at (5, 3) {$5$};
\draw (.5, 2.5)--(5.5, 2.5)--(5.5, 3.5)--(.5, 3.5)--cycle;
\draw (1.5, 2.5)--(1.5, 3.5) (2.5, 2.5)--(2.5, 3.5) (3.5, 2.5)--(3.5, 3.5) (4.5, 2.5)--(4.5, 3.5);

\draw[->] (1, 2.5)--(4, 1.5);
\draw[->] (2, 2.5)--(1, 1.5);
\draw[->] (3, 2.5)--(5, 1.5);
\draw[->] (4, 2.5)--(2, 1.5);
\draw[->] (5, 2.5)--(3, 1.5);

\node[emp] at (1, 1) {$1$};
\node[emp] at (2, 1) {$2$};
\node[emp] at (3, 1) {$3$};
\node[emp] at (4, 1) {$4$};
\node[emp] at (5, 1) {$5$};
\draw (.5, .5)--(5.5, .5)--(5.5, 1.5)--(.5, 1.5)--cycle;
\draw (1.5, .5)--(1.5, 1.5) (2.5, .5)--(2.5, 1.5) (3.5, .5)--(3.5, 1.5) (4.5, .5)--(4.5, 1.5);

%%%

\node[emp] at (-1, 3) {$=$};

\node[emp] at (-8+1, 4.5) {$1$};
\node[emp] at (-8+2, 4.5) {$2$};
\node[emp] at (-8+3, 4.5) {$3$};
\draw (-8+.5, 4)--(-8+3.5, 4)--(-8+3.5, 5)--(-8+.5, 5)--cycle;
\draw (-8+1.5, 4)--(-8+1.5, 5) (-8+2.5, 4)--(-8+2.5, 5);

\draw[->] (-8+1, 4)--(-8+4, 2);
\draw[->] (-8+2, 4)--(-8+1, 2);
\draw[->] (-8+3, 4)--(-8+5, 2);
\draw (-8+2, 2)--(-8+2.5, 2.5)--(-8+2, 3)--(-8+1.5, 2.5)--cycle;
\node[emp] at (-8+2, 2.5) {$2$};
\draw[fill=white] (-8+3, 2)--(-8+3.5, 2.5)--(-8+3, 3)--(-8+2.5, 2.5)--cycle;
\node[emp] at (-8+3, 2.5) {$1$};

\node[emp] at (-8+1, 1.5) {$1$};
\node[emp] at (-8+2, 1.5) {$2$};
\node[emp] at (-8+3, 1.5) {$3$};
\node[emp] at (-8+4, 1.5) {$4$};
\node[emp] at (-8+5, 1.5) {$5$};
\draw (-8+.5, 1)--(-8+5.5, 1)--(-8+5.5, 2)--(-8+.5, 2)--cycle;
\draw (-8+1.5, 1)--(-8+1.5, 2) (-8+2.5, 1)--(-8+2.5, 2) (-8+3.5, 1)--(-8+3.5, 2) (-8+4.5, 1)--(-8+4.5, 2);

\end{tikzpicture}
\end{center}
\caption{Any morphism in $\FI_d$ can be decomposed as a sequence of high-insertion maps, followed by a permutation that preserves the order of the newly inserted elements.  The morphism in the figure is equal to $(1\ 4\ 2)(3\ 5) \circ i_1 \circ i_2$.}\label{fig-decompose}
\end{figure}
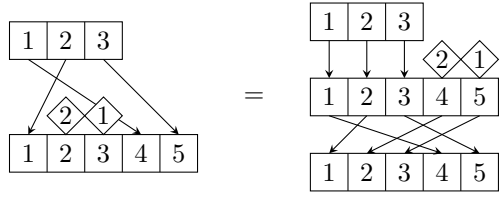

Given an arbitrary morphism $(\varphi, c)$ in $\FI_d$, as in Figure~\ref{fig-decompose} we can write the injection $\varphi \co [n] \rightarrow [m]$ uniquely as $\sigma_\varphi \circ i^{m-n}$, such that $\sigma_{\varphi} \in S_m$ is order-preserving on the set $[m] \setminus [n]$, and $i^{m-n}$ denotes the composition of inclusions $i \co [n] \hookrightarrow [n+1]$, $i \co [n+1] \hookrightarrow [n+2]$, and so on.  In other words, $\sigma_{\varphi}$ takes the same values on $[n]$ as $\varphi$, and maps $[m] \setminus [n]$ to the complement of the image of $\varphi$ in order.  Looking at the coloring $c$ on $[m] \setminus \varphi([n])$, we let $c_1, \ldots, c_{m-n}$ denote the values of $c$ in order; to be precise, we have $c_i = c(\sigma_{\varphi}(i))$ for $i = n+1, \ldots, m$.  Then we have
\[(\varphi, c) = (\sigma_\varphi, \cdot) \circ (i, \ m \mapsto c_{m-n}) \circ \cdots \circ (i, \ n+1 \mapsto c_1),\]
so we should define
\begin{align*}
(\varphi, c)_* &= (\sigma_\varphi, \cdot)_* \circ (i, \ m \mapsto c_{m-n})_* \circ \cdots \circ (i, \ n+1 \mapsto c_1)_* \\
&= [\sigma_\varphi] \circ [i_{c_{m-n}}] \circ \cdots \circ [i_{c_1}].
\end{align*}

To check functoriality, we need to check that if we have another sequence of permutations and high-insertions that composes to $(\varphi, c)$ in $\FI_{d}$, then the corresponding maps on the various modules $M_n$ compose to $(\varphi, c)_*$.  Given an arbitrary sequence of permutations and high-insertion maps, the property that ``high-insertion maps commute with permutations'' implies that we can push all the permutations to the left past the high-insertion maps, without changing the composition map, to get a composition of permutations followed by a composition of high-insertion maps.  Using the fact that the permutations in each $S_n$ form a group action on $M_n$, we can replace the composition of permutations by a single permutation.  

Thus, to prove that we have an $\FI_{d}$--module, it suffices to show that if
\[(\varphi, c) = (\sigma', \cdot) \circ (i, \ m \mapsto c'_{m-n}) \circ \cdots \circ (i, \ n+1 \mapsto c'_{1}),\]
then we have
\[[\sigma'] \circ [i_{c'_{m-n}}] \circ \cdots \circ [i_{c'_1}] = [\sigma_\varphi] \circ [i_{c_{m-n}}] \circ \cdots \circ [i_{c_1}].\]
Because $\sigma'$ and $\sigma_{\varphi}$ both take the same values on $[n]$ as $\varphi$, we can write $\sigma' = \sigma_\varphi \circ \sigma''$, where $\sigma''$ only permutes $[m]\setminus [n]$.  Thus, canceling $[\sigma_\varphi]$ from both sides it suffices to show that we have
\[[\sigma''] \circ [i_{c'_{m-n}}] \circ \cdots \circ [i_{c'_{1}}] = [i_{c_{m-n}}] \circ \cdots \circ [i_{c_{1}}].\]

This identity comes from the property that ``insertions are unordered''.  Specifically, we can use induction on $m-n$.  If $m-n = 1$, there is nothing to prove.  Otherwise, let $n+k = (\sigma'')^{-1}(n+1)$; that is, in the alternative composition, element $n+k$ gets inserted with color $c'_k = c_1$ and then $\sigma''$ changes its number to $n+1$.  By the ``insertions are unordered'' property we can write
\[[i_{c'_k}]\circ [i_{c'_{k-1}}] = [(n+k\ n+k-1)] \circ [i_{c'_{k-1}}] \circ [i_{c'_k}],\]
then
\[[i_{c'_k}] \circ [i_{c'_{k-2}}] = [(n+k-1\ n+k-2)] \circ [i_{c'_{k-2}}] \circ [i_{c'_k}],\]
and so on, until the composition ends with $[i_{c'_k}]$ on the right.  Then, applying the ``high-insertion maps commute with permutations'' property, we can move all the transpositions to the left to make the composition
\[\sigma'' \circ (n+k \ n+k-1)\circ (n+k-1\ n+k-2) \circ \cdots \circ (n+2\ n+1),\]
which is equal to
\[\sigma'' \circ (n+k\ n+k-1\ \cdots \ n+2\ n+1),\]
a permutation that fixes $n+1$.  Denoting this new permutation by $\sigma'''$, we have
\[[\sigma''] \circ [i_{c'_{m-n}}] \circ \cdots \circ [i_{c'_{1}}] = [\sigma'''] \circ [i_{c'_{m-n}}] \circ \cdots \circ [i_{c'_{k+1}}] \circ [i_{c'_{k-1}}] \circ \cdots \circ [i_{c'_1}] \circ [i_{c'_k}],\]
and we know that $c'_k = c_1$ by how we have selected $k$.  Applying the inductive hypothesis, we have
\[[\sigma'''] \circ [i_{c'_{m-n}}] \circ \cdots \circ [i_{c'_{k+1}}] \circ [i_{c'_{k-1}}] \circ \cdots \circ [i_{c'_1}] = [i_{c_{m-n}}] \circ \cdots \circ [i_{c_{2}}],\]
and so composing with $[i_{c_1}]$ on the right, we obtain the desired equality.
\end{proof}

\section{$\FI_d$--module for disks in a strip}\label{sec-finish}

The goal of this section is to prove the following two theorems, which are the main theorems of this paper.

\begin{theorem}\label{thm-fid}
For any $j$, the homology groups $H_j(\cell(n, 2)) = H_j(\config(n, 2))$ form a finitely generated $\FI_{j+1}$--module over $\mathbb{Z}$.
\end{theorem}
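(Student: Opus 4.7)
The plan is to apply Lemma~\ref{lem-general-fid}: I need to specify an $S_n$-action on each $H_j(\cell(n, 2))$ together with $j+1$ high-insertion maps $[i_k] \co H_j(\cell(n, 2)) \to H_j(\cell(n+1, 2))$ and verify the two listed compatibilities. The $S_n$-action will come from relabeling cells, a chain map that induces an action on $H_j$. For the high-insertion maps, I will use the irreducible decomposition from Section~\ref{sec-basis}: every critical cell $e$ of dimension $j$ in $\cell(n, 2)$ decomposes uniquely as a concatenation product $e = s_0\ \vert\ a_1\ \vert\ s_1\ \vert\ \cdots\ \vert\ a_j\ \vert\ s_j$, where $a_1, \ldots, a_j$ are the $j$ dimension-$1$ irreducibles (of type $1\ 2$ or $1\ \vert\ 3\ 2$) and each $s_k$ is a descending run of singletons. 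I will define $[i_k]$ on the basis from Theorem~\ref{thm-basis} by inserting $n+1$ at the front of slot $k$:
\[ [i_k](z(e)) = z(s_0)\ \vert\ z(a_1)\ \vert\ \cdots\ \vert\ z(a_k)\ \vert\ (n+1)\ \vert\ z(s_k)\ \vert\ z(a_{k+1})\ \vert\ \cdots\ \vert\ z(s_j), \]
and extend by linearity. Because $n+1$ is the largest label, prepending it to $s_k$ preserves the critical-cell conditions of Theorem~\ref{thm-basis}, so the right-hand side equals $z(e')$ for some critical cell $e'$ in $\cell(n+1, 2)$, which is automatically a cycle.

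Next I would verify the two compatibilities. Commutation with permutations is straightforward because the insertion slot depends only on the combinatorial structure of irreducibles, not on individual label values. For ``insertions are unordered,'' when $k \neq \ell$ the identity $[(n+1\ n+2)] \circ [i_k] \circ [i_\ell] = [i_\ell] \circ [i_k]$ holds at the chain level, since both sides place the new labels at the fronts of distinct slots and the swap reconciles which label goes where. When $k = \ell$, the two sides produce chains whose slot $k$ content is $(n+1)\ \vert\ (n+2)\ \vert\ z(s_k)$ versus $(n+2)\ \vert\ (n+1)\ \vert\ z(s_k)$, so their difference equals $\pm Z\ \vert\ \del\bigl((n+1)\ (n+2)\bigr)\ \vert\ Z'$ for cycles $Z, Z'$. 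By the Leibniz rule this is $\pm\del\bigl(Z\ \vert\ (n+1)\ (n+2)\ \vert\ Z'\bigr)$, a boundary in $\cell(n+2, 2)$ because $(n+1)\ (n+2)$ is a permissible $2$-element block. Thus the two compositions agree in $H_j(\cell(n+2, 2))$, and Lemma~\ref{lem-general-fid} produces the $\FI_{j+1}$-module structure.

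For finite generation I will use a size bound: each dimension-$1$ irreducible uses at most three labels, so a critical cell of dimension $j$ with no dimension-$0$ irreducibles uses at most $3j$ labels. Let $\mathcal{G}$ be the finite set of all such singleton-free critical cells across $n \leq 3j$. Given any critical cell $e$ of dimension $j$ in $\cell(N, 2)$, I will canonically relabel $e$'s irreducibles to produce $g \in \mathcal{G}$ with labels $1, \ldots, L$. The $\FI_{j+1}$-morphism whose injection maps $g$'s labels order-preservingly onto $e$'s irreducible labels and whose coloring assigns each remaining label in $[N]$ the slot index it occupies in $e$ decomposes, via the proof of Lemma~\ref{lem-general-fid}, into a sequence of high-insertions followed by a permutation; tracing through shows it sends $z(g)$ to $\pm z(e)$. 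Hence $\{[z(g)] : g \in \mathcal{G}\}$ generates the $\FI_{j+1}$-module.

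The hardest step will be the chain-level bookkeeping for ``insertions are unordered'' when $k = \ell$. There the two compositions disagree as chains, and the Leibniz cancellation requires the specific $2$-element block $(n+1)\ (n+2)$ to lie in $\cell(n+2, 2)$. This uses $w = 2$ essentially; for $w > 2$ the analogous cancellation would demand larger blocks whose availability is constrained by the barrier structure, which likely accounts for the difficulty the author mentions in extending the argument.
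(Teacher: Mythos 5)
Your overall structure matches the paper: apply Lemma~\ref{lem-general-fid}, define the high-insertion maps by inserting the new maximal label just after the $k$th barrier, and prove finite generation by stripping off singletons. Your definition of $[i_k]$ on the basis of Theorem~\ref{thm-basis} agrees with the paper's, your treatment of ``insertions are unordered'' (both the $k\neq\ell$ chain-level identity and the $k=\ell$ Leibniz-rule boundary) is correct and parallels the paper's use of Lemma~\ref{lem-leibniz}, and your finite-generation argument is essentially the paper's with a bit more explicitness about the decomposition of the $\FI_{j+1}$-morphism.

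There is, however, a genuine gap in your handling of ``high-insertion maps commute with permutations,'' which you dismiss as ``straightforward because the insertion slot depends only on the combinatorial structure of irreducibles.'' That reasoning would suffice if $[\sigma]$ sent basis cycles to basis cycles, but it does not: applying $\sigma$ at the chain level produces a cycle that is merely \emph{homologous} to some $\mathbb{Z}$-linear combination $\sum_i \alpha_i z(e_i)$, and the critical cells $e_i$ that appear bear no simple relation to the irreducible decomposition of $e$. Since $[i_k]$ is defined by first rewriting a homology class in the basis and then inserting slot-by-slot, establishing $[i_k]\circ[\sigma] = [\widetilde\sigma]\circ[i_k]$ amounts to proving that inserting $n+1$ after the $k$th barrier of each $e_i$ reassembles, in homology, to the same thing as permuting after inserting into $e$. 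The paper's Figure~\ref{fig-commute} exhibits a concrete case where this equality fails at the cycle level, and the proof of Lemma~\ref{lem-permute-insert} goes through only by first isolating three auxiliary properties of barriers (number of barriers is preserved under permutation; only barriers obstruct singletons; critical cells can be concatenated at a barrier), then splitting $z(e)$ at the $k$th barrier as $z(e_1)\,\vert\,z(e_2)$ and pushing the inserted singleton across the barrier-free tail using Lemma~\ref{lem-leibniz}. None of this machinery appears in your proposal, and it is in fact the most substantial verification in the argument --- not the $k=\ell$ case of insertions-unordered, which is where you anticipate the difficulty. Without it, the first hypothesis of Lemma~\ref{lem-general-fid} is not established.
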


\begin{theorem}\label{thm-nok-fid}
For any $j \geq 0$ and $w \geq 1$, the homology groups $H_j(\desc(n, w))$ are zero unless $j$ is a multiple of $w-1$.  If $j = b(w-1)$ for some integer $b$, then the homology groups $H_j(\desc(n, w))$ form a finitely generated $\FI_{b+1}$--module over $\mathbb{Z}$, and thus for $k=w+1$, the no--$k$--equal homology groups $H_j(\no_k(n, \mathbb{R}))$ also form a finitely generated $\FI_{b+1}$--module.
\end{theorem}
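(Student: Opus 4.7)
The plan is to apply Lemma~\ref{lem-general-fid} to the Morse basis of $H_*(\desc(n,w))$ produced by Theorem~\ref{thm-basis}. First, observe that the dimension of a critical cell equals the sum over its blocks of the block size minus one; by Theorem~\ref{thm-basis}(2) every non-singleton (barrier) block has size exactly $w$ and thus contributes $w-1$, while singletons contribute zero. Hence a critical cell with $b$ barriers has dimension exactly $b(w-1)$, so $H_j(\desc(n,w))=0$ unless $(w-1)\mid j$; and for $j=b(w-1)$, the homology has a basis indexed by the critical cells with exactly $b$ barriers. Each such cell has the form $I_0\,B_1\,I_1\,B_2\,\cdots\,B_b\,I_b$, where the $B_i$ are the barriers and the $I_i$ are (possibly empty) decreasing runs of singletons, the rightmost singleton of each $I_{i-1}$ being the ``trigger'' that is smaller than every element of $B_i$.

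Next I would supply the $\FI_{b+1}$-structure data required by Lemma~\ref{lem-general-fid}. The $S_n$-action on $H_j(\desc(n,w))$ is transferred from the natural permutation action on $\no_k(n,\mathbb{R})$ via the homotopy equivalence of Theorem~\ref{thm-nok-cell}. For each color $k\in\{0,1,\ldots,b\}$ I define the high-insertion map $[i_k]\colon H_j(\desc(n,w))\to H_j(\desc(n+1,w))$ on basis cycles by
\[
[i_k](z(e)) \;=\; z(e_k),
\]
where $e_k$ is obtained from $e$ by inserting $n+1$ as a singleton at the very start of $I_k$. The cell $e_k$ is still critical: since $n+1$ is the largest label, prepending it to $I_k$ preserves the decreasing order within the interval, and because $n+1$ is never placed immediately before a barrier, every barrier in $e_k$ remains a follower of its original trigger.

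To invoke Lemma~\ref{lem-general-fid} I would then verify its two hypotheses combinatorially on the basis $\{z(e)\}$. The relation ``insertions are unordered'' is a direct case analysis: if $k\neq\ell$ the two sides produce the same critical cell outright, and if $k=\ell$ the transposition $(n{+}1,n{+}2)$ restores the correct ordering of the two newly inserted singletons at the start of $I_\ell$. The relation ``high-insertion maps commute with permutations'' is the subtler step, because $[\sigma]$ on the chain level does not preserve the Morse basis in general, and expressing $[\sigma](z(e))$ in the critical basis requires the discrete Morse reduction from Lemmas~\ref{lem-max-basis} and~\ref{lem-crit-max}. The guiding principle is that the block-structure of a cell --- and in particular the location of its barriers and intervals --- is intrinsic to the combinatorics and permutation-invariant; consequently the operation ``insert $n+1$ at the start of the $k$th interval'' commutes with any label-permutation that fixes $n+1$, and this commutativity should descend through the Morse reduction.

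Finally, for finite generation, note that a critical cell $e$ of $\desc(n,w)$ with $b$ barriers decomposes as a \emph{core} --- the $bw$ barrier labels together with the $b$ trigger singletons, totaling $b(w+1)$ elements --- plus some extra singletons distributed among the intervals. Every extra singleton can be produced from the core by an iterated high-insertion, followed by a permutation to correct the labels. Hence the $\FI_{b+1}$-module is generated by the finite set of Morse basis classes $z(e_0)$ for core critical cells $e_0\in\desc(b(w+1),w)$, and the transfer to $\no_k(n,\mathbb{R})$ is immediate from Theorem~\ref{thm-nok-cell}. I expect the main obstacle to be the permutation-commutativity check, since it requires carefully analyzing how the discrete Morse flow rewrites $[\sigma](z(e))$ in the critical basis; the other steps are combinatorial and follow the template established in the earlier sections.
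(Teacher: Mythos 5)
Your overall strategy matches the paper's: both use the Morse basis of Theorem~\ref{thm-basis}, define the same high-insertion maps (insert $n+1$ as a singleton immediately after the $k$th barrier), invoke Lemma~\ref{lem-general-fid}, and produce the same finite generating set consisting of core critical cells with $n = b(w+1)$. However, the proposal has a genuine gap precisely at the step you flag as ``subtler'': the hypothesis ``high-insertion maps commute with permutations'' is not a formal consequence of block structure being permutation-invariant. The point is that once $[\sigma]z(e)$ is rewritten in the Morse basis as $\sum_i\alpha_i z(e_i)$, the critical cells $e_i$ can have barrier placements bearing no simple relation to those of $e$, and the map $[i_k]\bigl(\sum_i\alpha_i z(e_i)\bigr)=\sum_i\alpha_i z(i_k(e_i))$ inserts $n+1$ after the $k$th barrier \emph{of each $e_i$ individually}. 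To reconcile this with $[\widetilde{\sigma}][i_k]z(e)$, the paper (Lemma~\ref{lem-permute-insert}) establishes and uses three properties of barriers --- the number of barriers is preserved under basis expansion, singletons commute past barrier-free critical cycles up to homology, and critical cells concatenate at barriers --- the second of which is Lemma~\ref{lem-leibniz}, a Leibniz-rule calculation. The factorization of each $e_i$ into ``core'' (through the $k$th barrier) and ``tail'' (singletons only), and the re-expansion of $z(\text{tail})\,\vert\,z_2$ in the basis, are what make the argument go through; ``the commutativity should descend through the Morse reduction'' is the conclusion one hopes for, not a proof of it.

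A secondary inaccuracy: in the $k=\ell$ case of ``insertions are unordered,'' applying $[i_k]$ twice already yields the critical (decreasing) arrangement $n+2\,\vert\,n+1$ after the $k$th barrier. The transposition $(n+1\ \,n+2)$ does not ``restore the correct ordering'' --- it moves to the non-critical arrangement $n+1\,\vert\,n+2$ --- so showing $[(n+1\ \,n+2)]\circ[i_k]\circ[i_k]=[i_k]\circ[i_k]$ again requires proving two genuinely distinct chains are homologous, which is again Lemma~\ref{lem-leibniz}. Thus even the case you describe as ``direct'' quietly rests on the same lemma that the commutativity check needs, underlining that Lemma~\ref{lem-leibniz} is the load-bearing ingredient your proposal omits.
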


Theorem~\ref{thm-basis} implies that $H_j(\desc(n, w)) = 0$ if $j$ is not a multiple of $w-1$, because in this case $\desc(n, w)$ has no critical $j$--cells.  And, we know from Section~\ref{sec-complex} that the cell complexes $\cell(n, 2)$ and $\desc(n, w)$ are homotopy equivalent to the configuration space $\config(n, 2)$ and the no-$k$-equal space $\no_{w+1}(n, \mathbb{R})$, respectively.  Thus, in both cases it remains to specify the permutation action and the high-insertion maps, to check the compatibility properties from the hypothesis of Lemma~\ref{lem-general-fid}, and to verify that the resulting $\FI_d$--module is finitely generated.

The permutation actions on $H_*(\cell(n, 2))$ and $H_*(\desc(n, w))$ come from the permutation actions on $\cell(n, 2)$ and $\desc(n, w)$, which correspond to the permutation actions on $\config(n, 2)$ and $\no_{w+1}(n, \mathbb{R})$ by permuting the labels.  Specifically, for each cell in $\cell(n, 2)$, we apply the permutation to the numbers in that symbol, giving another cell in $\cell(n, 2)$.  For each cell in $\desc(n, w)$, we apply the permutation to the numbers in that symbol, and then rearrange the numbers within each block so that they are in descending order.  For each permutation $\sigma \in S_n$, we denote the corresponding maps on homology by $[\sigma]$.  We note that the permutations do not respect the basis for homology given in Theorem~\ref{thm-basis}; applying a permutation to a basis cycle $z(e)$ may give a cycle that is homologous to a linear combination of several basis cycles.

We define the high-insertion maps in terms of \textit{\textbf{barriers}}, which roughly are the non-singleton blocks.  Specifically, as before we write each critical cell $e$ as the (unique) concatenation product of images of the cells $1$, $1\ 2$, $1\ \vert\ 3\ 2$, and $1\ \vert\ (w+1)\ w\ \cdots \ 3\ 2$ under order-preserving injections.  We consider each image of $1\ 2$, $1\ \vert\ 3\ 2$, and $1\ \vert\ (w+1)\ w\ \cdots\ 3\ 2$ to be a barrier.  For any critical $j$--cell in $\cell(n, 2)$, the number of barriers is $j$, and for any critical $j$--cell in $\desc(n, w)$, the number of barriers is $b = \frac{j}{w-1}$.

\begin{figure}
\begin{center}
\begin{tikzpicture}[scale=.5, emp/.style={inner sep = 0pt, outer sep = 0pt}, >=stealth]
\draw (5.5-.25, 5+0)--(5.5+5, 5+0);
\draw (5.5-.25, 5+2)--(5.5+5, 5+2);

\node (d3) at (.51+5.5+1, 5+.55) {$3$};
\node (d5) at (.51+5.5+1+.5, 5+.55+.866) {$5$};
\node (d2) at (.51+5.5+1-.5, 5+.55+.866) {$2$};
\node (d4) at (-.51+5.5+4.1, 5+.5) {$4$};
\node (d1) at (-.51+5.5+4.1, 5+1.5) {$1$};
%\node (d6) at (5.5+2.7, 5+.5) {$6$};

\draw (d3) circle (.5);
\draw (d5) circle (.5);
\draw (d2) circle (.5);
\draw (d4) circle (.5);
\draw (d1) circle (.5);
%\draw (d6) circle (.5);

\draw[->] (.51+5.5+1.53033008588991, 5+0.596669914110089) arc (-45:-15:.75);
\draw[->] (.51+5.5+1.19411428382689, 5+1.85144436971680) arc (75:105:.75);
\draw[->] (.51+5.5+0.275555630283199, 5+0.932885716173110) arc (195:225:.75);

\draw[->] (-.51+5.5+4.1+.52, 5+.7) arc (-30:30:.6);
\draw[->] (-.51+5.5+4.1-.52, 5+1.3) arc (150:210:.6);

\draw[->] (5.5+2.375, 4.75)--(5.5+2.375, 2.25);
\node[emp] at (5.5+2.375, 3.5) [label=left:{$i_1$}] {};
\draw[->] (5.5+1.375, 4.75)--(2.375, 2.25);
\node[emp] at (2.25+1.875, 3.5) [label=left:{$i_0$}] {};
\draw[->] (5.5+3.375, 4.75)--(11+2.375, 2.25);
\node[emp] at (7.75+3.875, 3.5) [label=right:{$i_2$}] {};

%%%

\draw (-.25, 0)--(5, 0);
\draw (-.25, 2)--(5, 2);

\node (d3) at (1.3+1, .55) {$3$};
\node (d5) at (1.3+1+.5, .55+.866) {$5$};
\node (d2) at (1.3+1-.5, .55+.866) {$2$};
\node (d4) at (4.1, .5) {$4$};
\node (d1) at (4.1, 1.5) {$1$};
\node (d6) at (.65, .5) {$6$};

\draw (d3) circle (.5);
\draw (d5) circle (.5);
\draw (d2) circle (.5);
\draw (d4) circle (.5);
\draw (d1) circle (.5);
\draw (d6) circle (.5);

\draw[->] (1.3+1.53033008588991, 0.596669914110089) arc (-45:-15:.75);
\draw[->] (1.3+1.19411428382689, 1.85144436971680) arc (75:105:.75);
\draw[->] (1.3+0.275555630283199, 0.932885716173110) arc (195:225:.75);

\draw[->] (4.1+.52, .7) arc (-30:30:.6);
\draw[->] (4.1-.52, 1.3) arc (150:210:.6);

%%%

\draw (5.5-.25, 0)--(5.5+5, 0);
\draw (5.5-.25, 2)--(5.5+5, 2);

\node (d3) at (5.5+1, .55) {$3$};
\node (d5) at (5.5+1+.5, .55+.866) {$5$};
\node (d2) at (5.5+1-.5, .55+.866) {$2$};
\node (d4) at (5.5+4.1, .5) {$4$};
\node (d1) at (5.5+4.1, 1.5) {$1$};
\node (d6) at (5.5+2.7, .5) {$6$};

\draw (d3) circle (.5);
\draw (d5) circle (.5);
\draw (d2) circle (.5);
\draw (d4) circle (.5);
\draw (d1) circle (.5);
\draw (d6) circle (.5);

\draw[->] (5.5+1.53033008588991, 0.596669914110089) arc (-45:-15:.75);
\draw[->] (5.5+1.19411428382689, 1.85144436971680) arc (75:105:.75);
\draw[->] (5.5+0.275555630283199, 0.932885716173110) arc (195:225:.75);

\draw[->] (5.5+4.1+.52, .7) arc (-30:30:.6);
\draw[->] (5.5+4.1-.52, 1.3) arc (150:210:.6);

%%%

\draw (11-.25, 0)--(11+5, 0);
\draw (11-.25, 2)--(11+5, 2);

\node (d3) at (11+1, .55) {$3$};
\node (d5) at (11+1+.5, .55+.866) {$5$};
\node (d2) at (11+1-.5, .55+.866) {$2$};
\node (d4) at (11+2.8, .5) {$4$};
\node (d1) at (11+2.8, 1.5) {$1$};
\node (d6) at (11+4.2, .5) {$6$};

\draw (d3) circle (.5);
\draw (d5) circle (.5);
\draw (d2) circle (.5);
\draw (d4) circle (.5);
\draw (d1) circle (.5);
\draw (d6) circle (.5);

\draw[->] (11+1.53033008588991, 0.596669914110089) arc (-45:-15:.75);
\draw[->] (11+1.19411428382689, 1.85144436971680) arc (75:105:.75);
\draw[->] (11+0.275555630283199, 0.932885716173110) arc (195:225:.75);

\draw[->] (-1.3+11+4.1+.52, .7) arc (-30:30:.6);
\draw[->] (-1.3+11+4.1-.52, 1.3) arc (150:210:.6);
\end{tikzpicture}
\end{center}
\caption{The $k$th high-insertion map $i_k$ inserts a singleton just after the $k$th barrier, with label greater than all the existing labels.}\label{fig-high-options}
\end{figure}
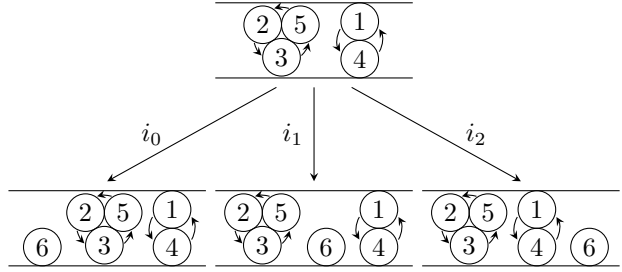

For $0 \leq k \leq j$ in the case of $\cell(n, 2)$, and for $0 \leq k \leq b$ in the case of $\desc(n, w)$, the $k$th high-insertion map is defined as follows and is depicted in Figure~\ref{fig-high-options}.  Given a critical cell $e$ of $\cell(n, 2)$, to find $i_k(e)$ we insert a block containing only the number $n+1$, right after the $k$th barrier of $e$ (or as the first block, if $k = 0$).  We observe that the result is also a critical cell.  Thus, these maps give rise to maps on homology, $[i_k] \co H_j(\cell(n, 2)) \rightarrow H_j(\cell(n+1, 2))$ and $[i_k] \co H_j(\desc(n, w)) \rightarrow H_j(\desc(n+1, w))$:  Given a homology class, we write it in terms of the basis cycles $z(e)$, and then replace each $z(e)$ by $z(i_k(e))$.

The proof of compatibility between the permutation action and the high-insertion maps is based on the following three useful properties of barriers:
\begin{enumerate}
\item ``Number of barriers is preserved by permutation'': If $f$ is a critical cell and $\sigma$ is a permutation, and we write $\sigma(z(f))$ in terms of the basis as
\[\sigma(z(f)) = \sum_i \alpha_i\cdot z(e_i),\]
then each critical cell $e_i$ has the same number of barriers as $f$ has.
\item ``Only barriers obstruct singletons'': Let $e$ be a critical cell without any barrier, and let $i$ be a single number.  Then the cycles $z(e)\  \vert\ i$ and $i\ \vert\ z(e)$ are homologous.
\item ``Critical cells can concatenate at a barrier'': If $e_1$ and $e_2$ are critical cells such that $e_1$ ends with a barrier, then $e_1\ \vert\ e_2$ is a critical cell.
\end{enumerate}

Property~(1) is true because in $\cell(n, 2)$ or in $\desc(n, w)$, every critical cell of a given dimension has the same number of barriers.  Property~(3) is true according to our characterization of critical cells in Theorem~\ref{thm-basis}.  To prove Property~(2), we observe that the only critical cells without barriers have only singleton blocks, so it suffices to show that we can permute consecutive singletons, as in the following lemma and in Figure~\ref{fig-swap-singleton}.

\begin{lemma}\label{lem-leibniz}
Let $z_1$ and $z_2$ be injected cycles with disjoint sets of labels, and let $p$ and $q$ be numbers not appearing in $z_1$ and $z_2$.  Then the concatenation products $z_1\ \vert\ p\ \vert\ q\ \vert\ z_2$ and $z_1\ \vert\ q\ \vert\ p\ \vert\ z_2$ are homologous.
\end{lemma}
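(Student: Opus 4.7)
The plan is to exhibit the difference $z_1\ \vert\ p\ \vert\ q\ \vert\ z_2 - z_1\ \vert\ q\ \vert\ p\ \vert\ z_2$ as the boundary of an explicit chain, namely
\[c = z_1\ \vert\ p\ q\ \vert\ z_2,\]
where $p\ q$ denotes the single $2$--element block with entries $p$ and $q$ in that order. This is the shortest candidate chain available, and it has the right dimension: inserting $p\ q$ into a $j$--cycle produces a $(j+1)$--chain, whose boundary will land in the $j$--dimensional chains.

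First, I would compute $\del(p\ q)$ directly from the sign convention of Section~\ref{sec-complex}: since $p\ q$ is a single block, the coefficient of each codimension-$1$ face is the sign of the permutation obtained by deleting the bar without reshuffling. This gives $\del(p\ q) = p\ \vert\ q - q\ \vert\ p$.

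Next, I would apply the Leibniz rule $\del(g_1\ \vert\ g_2) = \del g_1\ \vert\ g_2 + (-1)^{b(g_1)} g_1\ \vert\ \del g_2$ iteratively to $c$. Because $z_1$ and $z_2$ are cycles, $\del z_1 = 0$ and $\del z_2 = 0$, and because $p\ q$ is a single block we have $b(p\ q) = 1$, so all terms except the one coming from $\del(p\ q)$ collapse, yielding
\[\del(z_1\ \vert\ p\ q\ \vert\ z_2) = (-1)^{b(z_1)} \bigl( z_1\ \vert\ p\ \vert\ q\ \vert\ z_2 - z_1\ \vert\ q\ \vert\ p\ \vert\ z_2 \bigr).\]
A completely parallel Leibniz computation, using $\del p = \del q = 0$ for the two singletons, verifies that $z_1\ \vert\ p\ \vert\ q\ \vert\ z_2$ and $z_1\ \vert\ q\ \vert\ p\ \vert\ z_2$ are themselves cycles. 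The displayed equation then exhibits their difference as a boundary, so the two cycles are homologous.

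The only point of care is the interpretation of the sign $(-1)^{b(z_1)}$ when $z_1$ is a linear combination of cells; strictly, the Leibniz rule must be applied cell-by-cell. This causes no problem because the identity $\dim f + b(f) = m$ for any cell $f$ in $\cell(m)$ forces every cell appearing in a homogeneous-dimension injected cycle to have the same number of blocks, making $(-1)^{b(z_1)}$ well defined as a single scalar. With this sign unambiguous, the main obstacle is purely the bookkeeping above, and the argument is clean.
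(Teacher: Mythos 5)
Your argument is correct and is essentially the paper's own proof: the paper exhibits the same difference as the boundary of the concatenation product with a single two-element block inserted (the paper writes $z_1\ \vert\ q\ p\ \vert\ z_2$ rather than $z_1\ \vert\ p\ q\ \vert\ z_2$, a harmless sign discrepancy), invoking the Leibniz rule and $\del z_1 = \del z_2 = 0$ exactly as you do. Your remark that $(-1)^{b(z_1)}$ is well defined because all cells of an injected $j$--cycle on a fixed label set have the same block count is a legitimate clarification that the paper leaves implicit.
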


\begin{figure}
\begin{center}
\begin{tikzpicture}[scale=.8, emp/.style={inner sep = 0pt, outer sep = 0pt}, >=stealth]
\node[text=gray!50] (d22) at (2.2, 1.5) {$2$};
\draw[gray!50] (d22) circle (.5);
\draw (-.5, 0)--(3.8, 0);
\draw (-.5, 2)--(3.8, 2);

\draw[->] (0+.52, .7) arc (-30:30:.6);
\draw[->] (0-.52, 1.3) arc (150:210:.6);
\node (d1) at (0, 1.5) {$1$};
\node (d4) at (0, .5) {$4$};
\node (d2) at (1.1, .5) {$2$};
\node (d3) at (2.2, .5) {$3$};
\draw (d1) circle (.5);
\draw (d4) circle (.5);
\draw (d2) circle (.5);
\draw (d3) circle (.5);
\draw[->, gray!50]  (1.24737205583712, 1.05000000000000) arc (150:120:1.1);
\draw[->, gray!50] (2.75000000000000, 1.45262794416288) arc (60:30:1.1);

\end{tikzpicture}\hspace{.2in}
\begin{tikzpicture}[scale=.8, emp/.style={inner sep = 0pt, outer sep = 0pt}, >=stealth]
\draw (-.5, 0)--(3.8, 0);
\draw (-.5, 2)--(3.8, 2);

\draw[->] (0+.52, .7) arc (-30:30:.6);
\draw[->] (0-.52, 1.3) arc (150:210:.6);
\node (d1) at (0, 1.5) {$1$};
\node (d4) at (0, .5) {$4$};
\node (d2) at (3.3, .5) {$2$};
\node (d3) at (2.2, .5) {$3$};
\draw (d1) circle (.5);
\draw (d4) circle (.5);
\draw (d2) circle (.5);
\draw (d3) circle (.5);

\end{tikzpicture}
\end{center}
\caption{The cycles $z(1\ 4\ \vert\ 2\ \vert\ 3)$ and $z(1\ 4\ \vert\ 3\ \vert\ 2)$ are homologous, because their difference is the boundary of the chain $z(1\ 4)\ \vert\ 2\ 3$.  More generally, we can permute consecutive singleton blocks in a cycle without changing the homology class.}\label{fig-swap-singleton}
\end{figure}
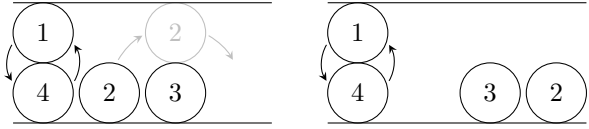

\begin{proof}
The proof follows from the more general fact that the homology class of any concatenation product of cycles is preserved by replacing a factor by a homologous factor.  This is because of the Leibniz rule from Section~\ref{sec-complex}: for any injected cells $f_1$ and $f_2$ with disjoint sets of labels, we have
\[\del(f_1 \ \vert\ f_2) = \del f_1 \ \vert\ f_2 + (-1)^{b(f_1)} f_1\ \vert\ \del f_2,\]
where $b(f_1)$ denotes the number of blocks in $f_1$.  The two concatenation products $z_1\ \vert\ p\ \vert\ q\ \vert\ z_2$ and $z_1\ \vert\ q\ \vert\ p\ \vert\ z_2$ differ by the boundary of the chain $z_1\ \vert\ q\ p\ \vert\ z_2$, so they are homologous.
\end{proof}

Using these three properties of barriers, we can verify the two compatibility properties between the permutations and the high-insertion maps.

\begin{lemma}\label{lem-permute-insert}
For each $j$, the $S_n$--actions and high-insertion maps on the homology groups $H_j(\cell(n, 2))$ and $H_j(\desc(n, w))$ have the property that ``high-insertion maps commute with permutations'' and the property that ``insertions are unordered''.
\end{lemma}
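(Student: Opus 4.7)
The plan is to apply Lemma~\ref{lem-general-fid}, which reduces the claim to verifying the two compatibility properties listed there for our definitions of the $S_n$-action and of $[i_k]$. Both checks are carried out at the chain level on basis cycles $z(e)$, systematically invoking the three properties of barriers listed just before the lemma and the Leibniz rule from Section~\ref{sec-complex}.

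For ``insertions are unordered'', the argument is essentially combinatorial because no high-insertion disturbs an existing barrier. If $k \neq \ell$, then the cells $i_k \circ i_\ell(e)$ and $i_\ell \circ i_k(e)$ coincide after relabeling the two newly inserted singletons, so the transposition $(n+1\ n+2)$ converts the first into the second on the nose. If $k = \ell$, then both orders place the two new singletons consecutively immediately after the $k$th barrier, and $(n+1\ n+2)$ swaps their labels; the resulting chains $z(\cdots\vert(n+2)\vert(n+1)\vert\cdots)$ and $z(\cdots\vert(n+1)\vert(n+2)\vert\cdots)$ differ by the boundary of the chain obtained by merging the two singletons into a single two-element block, which is exactly the Leibniz-type computation behind Lemma~\ref{lem-leibniz}.

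For ``high-insertion maps commute with permutations'', I would decompose each critical cell $e$ as $e = e_L\vert e_R$ at the $k$th barrier; by Theorem~\ref{thm-basis} both $e_L$ and $e_R$ are critical, so that $z(i_k(e)) = z(e_L)\vert(n+1)\vert z(e_R)$. Because $\widetilde{\sigma}$ fixes $n+1$, applying it yields $\widetilde{\sigma}\cdot z(i_k(e)) = (\sigma z(e_L))\vert(n+1)\vert(\sigma z(e_R))$ at the chain level. I would expand $\sigma z(e_L) \sim \sum_p \beta_p\, z(e_{L,p})$ and $\sigma z(e_R) \sim \sum_q \gamma_q\, z(e_{R,q})$ modulo boundary using Theorem~\ref{thm-basis}, and push the boundary corrections outside the concatenation product using the Leibniz rule together with the fact that the singleton $(n+1)$ has zero boundary; this reduces $\widetilde{\sigma}\cdot z(i_k(e))$ modulo boundary to $\sum_{p,q}\beta_p\gamma_q\, z(e_{L,p})\vert(n+1)\vert z(e_{R,q})$. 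By barrier property~(1), each $e_{L,p}$ has $k$ barriers and each $e_{R,q}$ has $j-k$ barriers.

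The main obstacle is that nothing in this decomposition forces $e_{L,p}$ to end with a barrier, so $e_{L,p}\vert(n+1)\vert e_{R,q}$ is not automatically of the form $i_k$ applied to a critical cell, and barrier property~(3) does not apply directly. To overcome this I would use barrier property~(2) (Lemma~\ref{lem-leibniz}) to sort, modulo boundary, all the loose singletons between the last barrier of $e_{L,p}$ and the first barrier of $e_{R,q}$, together with the inserted $(n+1)$, into decreasing order; this converts each summand into $z(i_k(\tilde{e}_{p,q}))$ for a critical cell $\tilde{e}_{p,q}$ obtained from $e_{L,p}$ and $e_{R,q}$ by performing the corresponding sort without $(n+1)$. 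Applying the same sort to each $z(e_{L,p})\vert z(e_{R,q})$ in the chain-level basis expansion of $\sigma z(e)$ gives $[\sigma z(e)] = \sum_{p,q}\beta_p\gamma_q\,[z(\tilde{e}_{p,q})]$, hence $[i_k][\sigma][z(e)] = \sum_{p,q}\beta_p\gamma_q\,[z(i_k(\tilde{e}_{p,q}))]$, which matches the reduction of $\widetilde{\sigma}\cdot z(i_k(e))$ and closes the argument for both $\cell(n,2)$ and $\desc(n,w)$.
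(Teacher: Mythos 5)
Your overall strategy is sound: reduce to Lemma~\ref{lem-general-fid} and check the two compatibility properties using the three ``barrier'' facts and Lemma~\ref{lem-leibniz}. Your treatment of ``insertions are unordered'' is correct and matches the paper's. However, there is a genuine gap in the ``high-insertion maps commute with permutations'' part.

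You expand $\sigma z(e_L)$ and $\sigma z(e_R)$ in basis \emph{separately}, obtaining $e_{L,p}$ and $e_{R,q}$, and then propose to bring each concatenation $z(e_{L,p})\ \vert\ n{+}1\ \vert\ z(e_{R,q})$ into the form $z(i_k(\tilde{e}_{p,q}))$ by sorting loose singletons at the junction via Lemma~\ref{lem-leibniz}. This fails. The obstruction is that Lemma~\ref{lem-leibniz} swaps consecutive \emph{singleton concatenation factors}; it cannot move a singleton past the cycle $z(\gamma\ \vert\ \beta\ \alpha)$ belonging to an irreducible of the form $1\ \vert\ 3\ 2$ (or $1\ \vert\ (w{+}1)\ w\ \cdots\ 3\ 2$ in $\desc(n,w)$), because $\gamma$ is embedded in that cycle and is not a loose factor. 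Concretely, if the leading part of $e_{R,q}$ is a $\gamma\ \vert\ \beta\ \alpha$ irreducible and the smallest trailing singleton of $e_{L,p}$ is less than $\gamma$, your sorted cell is not critical. For instance take $e_{L,p} = 1\ 2\ \vert\ 3$ and $e_{R,q} = 4\ \vert\ 6\ 5$: the only loose singletons at the junction are $3$ and $n{+}1$, sorting them gives the symbol $1\ 2\ \vert\ n{+}1\ \vert\ 3\ \vert\ 4\ \vert\ 6\ 5$, which has consecutive singletons $3,4$ in \emph{increasing} order and is therefore not critical. Moreover $z(1\ 2)\ \vert\ 3\ \vert\ z(4\ \vert\ 6\ 5)$ and $z(1\ 2)\ \vert\ 4\ \vert\ z(3\ \vert\ 6\ 5)$ represent different homology classes, so you cannot simply ``swap $3$ into the irreducible'' either. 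Thus your reduction step produces sums of chains that are neither basis cycles nor visibly $[i_k]$ of basis cycles, and the comparison of the two sides does not close.

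The paper's proof avoids this by a different order of operations: it expands only $\sigma z(e_1)$ in basis, and then for each resulting $e_{i_0}$, splits $e_{i_0} = c\ \vert\ t$ at the $k$th barrier (so $c$ ends in a barrier and $t$ is a barrier-free singleton tail) and expands the whole cycle $z(t)\ \vert\ \sigma z(e_2)$ in basis \emph{before} reattaching $c$. Each basis term $e_i$ of that joint expansion then concatenates with $c$ at a barrier, so $c\ \vert\ e_i$ is critical by barrier property~(3), and $[i_k]$ can be read off cleanly. In other words, the boundary between ``left'' and ``right'' must be moved to the last barrier of $e_{i_0}$ (not of $e$), and the basis expansion on the right must be taken \emph{jointly} with the tail $t$, not of $\sigma z(e_R)$ alone. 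To repair your argument you would need to abandon the separate expansion of $\sigma z(e_R)$ and instead adopt this nested decomposition.
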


\begin{figure}
\begin{center}
\begin{tikzpicture}[scale=1, emp/.style={inner sep = 0pt, outer sep = 0pt}, >=stealth]
\node[emp] at (0, 0) {$-z(1\vert 32 \vert 4) + z(1\vert 23\vert 4) - z(23\vert 1 \vert 4) \sim -z(1\vert 32 \vert 4) + z(1 \vert 23\vert 4) - z(23\vert 4 \vert 1)$};
\node[emp] at (-3, 1.5) {$z(1\vert 32) \vert 4$};
\node[emp] at (3, 1.5) {$-z(1\vert 32) + z(1\vert 23) - z(23 \vert 1)$};
\node[emp] at (0, 3) {$z(1\vert 32)$};
\draw[->] (-.5, 2.75)--(-2.5, 1.75);
\node[emp] at (-1.5, 2.25) [label=above:{$i_1$}] {};
\draw[->] (.5, 2.75)--(2.5, 1.75);
\node[emp] at (1.5, 2.25) [label=above:{$(2\ 3)$}] {};
\draw[->] (-2.5, 1.25)--(-2.3, .25);
\node[emp] at (-2.4, .75) [label=right:{$(2\ 3)$}] {};
\draw[->] (2.5, 1.25)--(2.3, .25);
\node[emp] at (2.4, .75) [label=left:{$i_1$}] {};
\end{tikzpicture}
\end{center}
\caption{The property that ``high-insertion maps commute with permutations'' is not quite true for cycles, but it is true for homology classes.}\label{fig-commute}
\end{figure}

\begin{proof}
First we verify the first property, that if $\sigma \in S_n$, then for all $k$ we have
\[[i_k] \circ [\sigma] = [\widetilde{\sigma}] \circ [i_k],\]
where $\widetilde{\sigma}$ is the corresponding permutation in $S_{n+1}$.  Figure~\ref{fig-commute} lays out what we need to prove in a specific example.  It suffices to check the desired relation on the basis cycles $z(e)$, where $e$ is a critical cell.  We can write $e$ as $e_1 \ \vert\ e_2$, where $e_1$ ends with the $k$th barrier of $e$, so that $i_k(e) = e_1 \ \vert \ n+1 \ \vert\  e_2$.  Let $z_1$ and $z_2$ be the injected cycles resulting from applying $\sigma$ to $z(e_1)$ and $z(e_2)$, so that we have $[\sigma] z(e) = z_1 \ \vert\ z_2$.  By definition we have
\[([\widetilde{\sigma}] \circ [i_k]) z(e) = [\widetilde{\sigma}]z(e_1 \ \vert\ n+1\ \vert\ e_2) = z_1\  \vert\ n+1 \ \vert\ z_2,\]
and we want to show that this cycle is homologous to $([i_k] \circ [\sigma]) z(e) = [i_k](z_1 \ \vert\ z_2)$.

Applying a high-insertion map to $z_1 \ \vert\ z_2$ requires writing that cycle in terms of the basis cycles.  Suppose that $z_1$ is homologous to $\sum_{i} \alpha_i z(e_i)$.  Then $z_1 \ \vert\ z_2$ is homologous to $\sum_i \alpha_i z(e_i) \ \vert\ z_2$, and $z_1 \ \vert\ n+ 1\ \vert\ z_2$ is homologous to $\sum_{i} \alpha_i z(e_i) \ \vert\ n+ 1\ \vert\ z_2$, so it suffices to show, for any $i_0$, that we have (using the $\sim$ symbol for homologous cycles)
\[[i_k](z(e_{i_0}) \ \vert\ z_2) \sim z(e_{i_0})\  \vert\ n+ 1\  \vert\ z_2.\]
Using the ``number of barriers is preserved by permutation'' property, we know that $e_{i_0}$ has exactly $k$ barriers.  We write $e_{i_0}$ as $c \ \vert\ t$, where $c$ (``core'') ends with the $k$th barrier of $e_{i_0}$, and $t$ (``tail'') consists of the remaining blocks, containing no barrier.  We can write $z(t) \ \vert\ z_2$ in terms of the basis as $\sum_i \beta_i z(e_i)$.  Then, using the ``critical cells can concatenate at a barrier'' property, each $c \ \vert\ e_i$ is a critical cell, so we have
\[[i_k](z(e_{i_0}) \ \vert\ z_2) = [i_k](z(c) \ \vert\ z(t) \ \vert\ z_2) = \sum_i \beta_i \cdot [i_k](z(c) \ \vert\ z(e_i)) = \sum_i \beta_i \cdot z(c)\ \vert\ n+1\ \vert\ z(e_1).\]
To show this is homologous to $z(e_{i_0})\ \vert\ n+ 1\ \vert\ z_2$, we use the ``only barriers obstruct singletons'' property.  This implies that $z(t)\ \vert\ n+1$ is homologous to $n+1\ \vert\ z(t)$, so we have
\[z(e_{i_0})\  \vert\ n+ 1\ \vert\ z_2 = z(c)\ \vert\ z(t)\ \vert\ n+ 1\ \vert\ z_2 \sim z(c)\ \vert\ n+ 1\ \vert\ z(t) \ \vert \ z_2 \sim \sum_{i} \beta_i \cdot z(c)\ \vert\ n+ 1\ \vert\ z(e_1).\]
Thus for each $i_0$ we have
\[[i_k](z(e_{i_0}) \ \vert\ z_2) \sim z(e_{i_0})\ \vert\ n+ 1\ \vert\ z_2,\]
and so we have
\[[i_k](z_1 \ \vert\ z_2) \sim z_1\ \vert\ n+ 1\ \vert\ z_2,\]
as desired.  This completes the proof that ``high-insertion maps commute with permutations''.

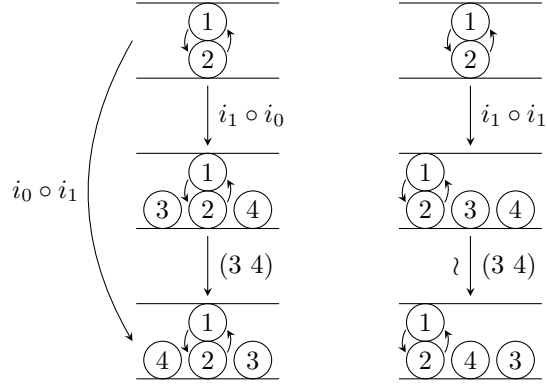
\begin{figure}
\begin{center}
\begin{tikzpicture}[scale=.5, emp/.style={inner sep = 0pt, outer sep = 0pt}, >=stealth]
\draw (-1.9, 0)--(1.9, 0);
\draw (-1.9, 2)--(1.9, 2);
\node[emp] (d1) at (0, 1.5) {$1$};
\node[emp] (d2) at (0, .5) {$2$};
\draw[->] (.52, .7) arc (-30:30:.6);
\draw[->] (-.52, 1.3) arc (150:210:.6);
\node[emp] (d3) at (-1.2, .5) {$3$};
\node[emp] (d4) at (1.2, .5) {$4$};

\draw (d1) circle (.5);
\draw (d2) circle (.5);
\draw (d3) circle (.5);
\draw (d4) circle (.5);

%%%

\draw (-1.9, 4+0)--(1.9, 4+0);
\draw (-1.9, 4+2)--(1.9, 4+2);
\node[emp] (d1) at (0, 4+1.5) {$1$};
\node[emp] (d2) at (0, 4+.5) {$2$};
\draw[->] (.52, 4+.7) arc (-30:30:.6);
\draw[->] (-.52, 4+1.3) arc (150:210:.6);

\draw (d1) circle (.5);
\draw (d2) circle (.5);

%%%

\draw (-1.9, -4+0)--(1.9, -4+0);
\draw (-1.9, -4+2)--(1.9, -4+2);
\node[emp] (d1) at (0, -4+1.5) {$1$};
\node[emp] (d2) at (0, -4+.5) {$2$};
\draw[->] (.52, -4+.7) arc (-30:30:.6);
\draw[->] (-.52, -4+1.3) arc (150:210:.6);
\node[emp] (d3) at (-1.2, -4+.5) {$4$};
\node[emp] (d4) at (1.2, -4+.5) {$3$};

\draw (d1) circle (.5);
\draw (d2) circle (.5);
\draw (d3) circle (.5);
\draw (d4) circle (.5);

%%%

\draw[->] (0, 3.8)--(0, 2.2);
\node[emp] at (0, 3) [label=right:{$i_1 \circ i_0$}] {};
\draw[->] (0, -.2)--(0, -1.8);
\node[emp] at (0, -1) [label=right:{$(3\ 4)$}] {};
\path[->] (-2, 5) edge[bend right] node [left] {$i_0 \circ i_1$} (-2, -3);

\end{tikzpicture}\hspace{.5in}
\begin{tikzpicture}[scale=.5, emp/.style={inner sep = 0pt, outer sep = 0pt}, >=stealth]
\draw (-1.9, 0)--(1.9, 0);
\draw (-1.9, 2)--(1.9, 2);
\node[emp] (d1) at (-1.2, 1.5) {$1$};
\node[emp] (d2) at (-1.2, .5) {$2$};
\draw[->] (-1.2+.52, .7) arc (-30:30:.6);
\draw[->] (-1.2-.52, 1.3) arc (150:210:.6);
\node[emp] (d4) at (0, .5) {$3$};
\node[emp] (d3) at (1.2, .5) {$4$};

\draw (d1) circle (.5);
\draw (d2) circle (.5);
\draw (d3) circle (.5);
\draw (d4) circle (.5);

%%%

\draw (-1.9, 4+0)--(1.9, 4+0);
\draw (-1.9, 4+2)--(1.9, 4+2);
\node[emp] (d1) at (0, 4+1.5) {$1$};
\node[emp] (d2) at (0, 4+.5) {$2$};
\draw[->] (.52, 4+.7) arc (-30:30:.6);
\draw[->] (-.52, 4+1.3) arc (150:210:.6);

\draw (d1) circle (.5);
\draw (d2) circle (.5);

%%%

\draw (-1.9, -4+0)--(1.9, -4+0);
\draw (-1.9, -4+2)--(1.9, -4+2);
\node[emp] (d1) at (-1.2, -4+1.5) {$1$};
\node[emp] (d2) at (-1.2, -4+.5) {$2$};
\draw[->] (-1.2+.52, -4+.7) arc (-30:30:.6);
\draw[->] (-1.2-.52, -4+1.3) arc (150:210:.6);
\node[emp] (d4) at (0, -4+.5) {$4$};
\node[emp] (d3) at (1.2, -4+.5) {$3$};

\draw (d1) circle (.5);
\draw (d2) circle (.5);
\draw (d3) circle (.5);
\draw (d4) circle (.5);

%%%

\draw[->] (0, 3.8)--(0, 2.2);
\node[emp] at (0, 3) [label=right:{$i_1 \circ i_1$}] {};
\draw[->] (0, -.2)--(0, -1.8);
\node[emp] at (0, -1) [label=right:{$(3\ 4)$}] [label=left:{$\wr$}] {};

\end{tikzpicture}
\end{center}
\caption{The ``insertions are unordered'' property says that when disks are inserted, the homology class of the result does not depend on the ordering of the insertions, as long as each disk goes between the right pair of barriers.}\label{fig-unordered}
\end{figure}

Next we check the property that ``insertions are unordered'', which is depicted in Figure~\ref{fig-unordered} and says that for all colors $k, \ell$ we have
\[[(n+1\ n+2)] \circ [i_k] \circ [i_\ell] = [i_\ell] \circ [i_k].\]
If $k \neq \ell$, then the equality holds on the level of cells; both sides result in a cell with the number $n+1$ inserted right after the $k$th barrier and $n+2$ inserted right after the $\ell$th barrier.  If $k=\ell$, then $i_k \circ i_k$ puts $n+2 \ \vert\ n+1$ right after the $k$th barrier, and $(n+1\ n+2) \circ i_k \circ i_k$ puts $n+1\ \vert\ n+2$ right after the $k$th barrier.  By the ``only barriers obstruct singletons'' property, these two resulting cells give homologous cycles.
\end{proof}

We now have the necessary pieces to prove our two main theorems.

\begin{proof}[Proof of Theorem~\ref{thm-fid}]
Lemma~\ref{lem-permute-insert} verifies the two hypotheses of Lemma~\ref{lem-general-fid}, which are that ``high-insertion maps commute with permutations'' and ``insertions are unordered''.  Then Lemma~\ref{lem-general-fid} implies that the compositions of these maps give an $\FI_d$--module for $d = j+1$.

We claim that a finite generating set for this $\FI_{j+1}$--module consists of the basis cycles $z(e)$ where $e$ is a critical cell for $n \leq 3j$.  Given any basis cycle $z(e)$ with $n > 3j$, we can write $e$ as the concatenation product of irreducible critical injected cells.  There are $j$ order-preserving images of the irreducibles $1\ 2$ and $1\ \vert\ 3\ 2$, and some nonzero number of additional singleton blocks.  Let $e'$ be the result of deleting these additional singleton blocks and shifting the numbers down so that they remain consecutive.  Then $z(e)$ is the result of applying some high-insertion maps and a permutation to $z(e')$---the permutation preserves the order of the numbers in $e'$---and $z(e')$ is in the proposed generating set.  Thus the finitely many basis cycles with $n \leq 3j$ do generate the $\FI_{j+1}$--module.
\end{proof}

\begin{proof}[Proof of Theorem~\ref{thm-nok-fid}]
If $j$ is not a multiple of $w-1$, there are no critical cells in dimension $j$ and thus no homology.  If $j = b(w-1)$, then Lemma~\ref{lem-permute-insert} and Lemma~\ref{lem-general-fid} imply that $H_j(\desc(n, w))$ is an $\FI_{b+1}$--module.  The critical cells with $n$ equal to $b(w+1)$---that is, those consisting only of barriers---form a finite generating set for the $\FI_{b+1}$--module.
\end{proof}

\section{Conclusion}\label{sec-conclusion}

To generalize the results of this paper to $\config(n, w)$ for $w > 2$ would be to prove the following conjecture.

\begin{conjecture}
For any $j$ and $w$, the homology groups $H_j(\config(n, w))$ form a finitely generated $\FI_d$--module for $d = 1 + \left\lfloor \frac{j}{w-1}\right\rfloor$.
\end{conjecture}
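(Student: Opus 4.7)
The plan is to mimic the strategy used for Theorem~\ref{thm-fid} and Theorem~\ref{thm-nok-fid}, but with additional work to handle the richer critical cell structure when $w > 2$. First I would apply the discrete gradient vector field coming from the total ordering in Lemma~\ref{lem-key} to $\cell(n, w)$ and characterize the critical cells by conditions analogous to those of Lemma~\ref{lem-cell-crit}: every two consecutive singletons are in decreasing order, and every non-singleton block has its elements in descending order and is a follower. Each critical cell $e$ then decomposes uniquely as a concatenation product of irreducible critical injected cells, which are order-preserving images of the singleton $1$ and of $1\ \vert\ s\ (s-1)\ \cdots\ 2$ for each $2 \leq s \leq w$. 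The basis cycle $z(e)$ is built concatenation-wise, setting $z(1\ \vert\ s\ (s-1)\ \cdots\ 2)$ equal to the boundary in $\cell(s)$ of the top cell $s\ (s-1)\ \cdots\ 2\ 1$. Verifying the max-property (analog of Lemma~\ref{lem-crit-max}) and hence obtaining a $\mathbb{Z}$-basis (analog of Theorem~\ref{thm-basis}) should go through by the same argument, since the proof only used properties (1)--(4) of the key function.

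For the $\FI_d$-module structure, I would declare a \emph{barrier} to be a block of size exactly $w$ and define $d = 1 + \left\lfloor \frac{j}{w-1}\right\rfloor$ high-insertion maps, where $[i_k]$ inserts a singleton with the new maximum label immediately after the $k$th full-size barrier (and at the front when $k = 0$); for critical cells with fewer than $k$ size-$w$ barriers, one must extend $[i_k]$ coherently, for instance by inserting immediately after the last existing barrier or at the end. The permutation action is inherited from the permutation action on $\cell(n, w)$. The verification plan is then to check the three barrier properties used in Lemma~\ref{lem-permute-insert}: (I) the number of barriers is preserved by permutations, (II) only barriers obstruct singletons, and (III) critical cells can concatenate at a barrier. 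Properties (I) and (III) are combinatorial and should follow from the characterization of critical cells together with the observation that barriers of size $w$ are dimension-maximal and concatenation-stable. Finite generation would then follow as in the proof of Theorem~\ref{thm-fid}: the basis cycles with $n$ bounded (roughly by $wj$) generate, because any other critical cell is obtained from one of these by applying order-preserving inclusions and a permutation.

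The main obstacle is property (II), ``only barriers obstruct singletons'', for $w > 2$. When $w = 2$, this property reduces to commuting two consecutive singletons, which is immediate from the Leibniz rule via Lemma~\ref{lem-leibniz}. For $w > 2$, however, one must show that at the level of homology a singleton can be commuted past any non-barrier block---i.e., any block of size $s$ with $2 \leq s < w$. Concretely, in $\cell(s+1, w)$ one would need the cycles
\[z(1\ \vert\ s\ (s{-}1)\ \cdots\ 2)\ \vert\ (s{+}1) \quad \text{and} \quad (s{+}1)\ \vert\ z(1\ \vert\ s\ (s{-}1)\ \cdots\ 2)\]
to be homologous. Geometrically this says that a free disk can slide past a rotating cluster of $s < w$ disks in a width-$w$ strip, and the homology between the two sides should be witnessed by a chain that uses a block of size $s+1$, combining the cluster with the extra disk; such a chain is \emph{unavailable} in the $w = 2$ cell complex, which is why the $w=2$ argument is strictly easier. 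The hard part is writing down this bounding chain explicitly, tracking signs via the Leibniz rule and the permutation-sign formula on single blocks, and verifying that the boundary is precisely the desired difference. A secondary complication is that critical $j$-cells have varying numbers of full-size barriers, so the extension of $[i_k]$ to cells with fewer than $k$ barriers must be consistent with the permutations and insertions already present, and the ``insertions are unordered'' check must accommodate insertions that ``collapse'' into the same position.
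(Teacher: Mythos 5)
This statement is Conjecture~7.1 of the paper, and the paper does \emph{not} prove it; Section~\ref{sec-conclusion} is devoted to explaining why the $w=2$ argument does not extend. Your proposal, while sensible as a sketch of what one would \emph{want} to do, runs into exactly the two obstructions the paper identifies.

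First, your proposed characterization of critical cells in $\cell(n,w)$ is not correct. The condition ``every non-singleton block has its elements in descending order and is a follower'' is the $\desc(n,w)$ criterion from Lemma~\ref{lem-desc-crit}; the $\cell(n,2)$ criterion in Lemma~\ref{lem-cell-crit} is genuinely different, since it allows $2$--element blocks in \emph{increasing} order (like $1\ 2$) to be critical without being followers, and these are precisely the $H_1$--generating barriers. For $w>2$ the picture is more delicate still: in $\cell(n)$ the cell $4\ 5\ 6\ \vert\ 1\ 2\ 3$ is critical and $1\ 2\ 3\ \vert\ 4\ 5\ 6$ pairs with $4\ 5\ 6\ 1\ 2\ 3$; in $\cell(6,4)$ that coface is missing, so the pairing on ascending blocks of intermediate size falls apart in ways that are not captured by the descending-follower condition. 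Relatedly, your claimed irreducibles $1\ \vert\ s\ (s{-}1)\ \cdots\ 2$ for $2 \le s < w$ are \emph{not} critical in $\cell(n,w)$: since $s \le w$ the cell $s\ (s{-}1)\ \cdots\ 1$ is a coface inside $\cell(n,w)$ and pairs with $1\ \vert\ s\ (s{-}1)\ \cdots\ 2$, so your basis cycle $\del(s\ (s{-}1)\ \cdots\ 1)$ bounds in $\cell(n,w)$ and carries no homology. Only $s=w+1$ gives a genuine cycle, and that block is excluded by the size constraint; the complex $\cell(n,w)$ supports both ``ascending-block'' cycles and ``boundary-of-oversized-block'' cycles in a mixture that is not resolved by this Morse pairing.

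Second, what you call a ``secondary complication''---varying barrier counts among critical cells of the same dimension---is in fact the central obstacle. For $w=2$ every critical $j$--cell has exactly $j$ barriers, which is precisely why property~(1) of Lemma~\ref{lem-permute-insert} (``number of barriers is preserved by permutation'') holds, and that property is load-bearing throughout the commutativity proof. For $w>2$ it fails: the paper's $\config(8,3)$ example exhibits a $4$--cycle built from four circling pairs (zero width-$3$ barriers) and another $4$--cycle built from two width-$3$ clusters plus singletons (two barriers). Once cycles of the same degree have different barrier counts, the high-insertion map $[i_k]$ has no consistent target, and your suggestion to ``insert after the last existing barrier or at the end'' is an ad hoc choice with no reason to commute with the $S_n$--action or to satisfy the ``insertions are unordered'' identity. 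The paper's Questions~7.2 and~7.3 are asking exactly for the missing structure---an equivariant way to decompose cycles into single-barrier factors---and until that is supplied your argument cannot close.
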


For various reasons this conjecture seems trickier to prove than the results of this paper.  The first difficulty is in finding a $\mathbb{Z}$--basis for $H_j(\config(n, w))$.  The strategy that produces the bases given in this paper goes roughly as follows.  There is a total ordering on the cells of $\cell(n)$ such that the critical cells of the discrete gradient vector field are in bijection with a $\mathbb{Z}$--basis of $H_*(\cell(n))$.  (To order, we modify the ``key'' function from Lemma~\ref{lem-key} so that there is no special case for follower blocks.  The critical cells are those where the first element of the block is the least element, and where furthermore the blocks appear in descending order of first element.)  When a cell in $\cell(n, 2)$ matches up to a higher-dimensional cell in $\cell(n) \setminus \cell(n, 2)$, the boundary of the higher-dimensional cell is a good choice of cycle to add to our basis.  In this way, for $w = 2$ we construct basis cycles as concatenation products of two kinds of cycles: those that generate the homology of $\cell(n)$, such as $z(1\ 2) = 1\ 2 + 2\ 1$, and those that are boundaries in $\cell(n, 2)$ of cells of $\cell(n) \setminus \cell(n, 2)$, such as $z(1\ \vert\ 3\ 2) = \del(3\ 2\ 1)$.

However, this strategy makes less sense for larger $w$.  For instance, in $\cell(6)$, the cell $4\ 5\ 6\ \vert\ 1\ 2\ 3$ is critical, whereas the cell $1\ 2\ 3\ \vert\ 4\ 5\ 6$ matches up to the cell $4\ 5\ 6\ 1\ 2\ 3$.  Should cell $1\ 2\ 3\ \vert\ 4\ 5\ 6$ be critical in $\cell(6, 4)$?  If so, its corresponding cycle cannot be $\del(4\ 5\ 6\ 1\ 2\ 3)$, which is in $\cell(6, 5)$ but is not in $\cell(6, 4)$.  For this reason, it is not clear whether the discrete Morse theory strategy can work for larger $w$.

\begin{question}
For $w > 2$, is $H_*(\config(n, w))$ a free abelian group?  If so, is there a discrete gradient vector field on $\cell(n, w)$ that has the same number of critical cells as the rank of $H_*(\config(n, w))$?
\end{question}

The second difficulty is in counting barriers in the various cycles.  In the theorems of this paper, for each space $\config(n, 2)$ or $\no_k(n, \mathbb{R})$, every cycle of a given dimension has the same number of barriers.  However, this is not true for $\config(n, w)$ with $w > 2$.  For instance, in $\config(8, 3)$ we can construct a $4$--cycle with no barriers by using four disjoint circling pairs, and we can also construct a $4$--cycle with two barriers by using two clusters of three disks and two fixed singleton disks.  The $\FI_d$--module structure depends completely on being able to recognize barriers in a consistent way.  How can we make the notion of barrier precise?  Roughly, we can say that a cycle $z$ on $n$ disks has at least one barrier if $(n+1)\ \vert\ z$ and $z\ \vert\ (n+1)$ are not homologous.  But, if a cycle is not a concatenation product, how can we count the barriers?

\begin{question}
Is there a collection of single-barrier cycles in $H_*(\config(n, w))$, such that there is an $S_n$--equivariant way of breaking arbitrary cycles (or homology classes) into sums of concatenation products of these single-barrier cycles?
\end{question}

Counting barriers is related to estimating the growth of the ranks of the homology groups.  The proofs in this paper imply that not only are $H_j(\config(n, 2))$ and $H_j(\no_k(n, \mathbb{R}))$ finitely generated $\FI_d$--modules for $d = j+1$ and $d = \frac{j}{k-2}$ respectively, but in fact the rank of each of these free abelian groups is equal to $d^n$ times a polynomial function of $n$.  In the case of $\config(n, w)$ for $w>2$ where cycles of the same dimension can have different numbers of barriers, we might expect a formula for the rank to include terms such as $(d-1)^n$ times a polynomial of $n$, $(d-2)^n$ times a polynomial, and so on, but in the setting of this paper we do not have these additional terms.

\begin{question}
Are the $\FI_d$-modules $H_j(\config(n, 2))$ and $H_j(\no_k(n, \mathbb{R}))$ free $\FI_d$--modules?
\end{question}

The $\FI_d$--modules are not freely generated by the basis cycles we give, because the permutation action does not take basis cycles to basis cycles (as in Figure~\ref{fig-commute}, for instance).  However, maybe there is another choice of generating set that would show the $\FI_d$--modules to be free.

Perhaps some new point of view can resolve the question of $\FI_d$--module structure on $H_j(\config(n, w))$ and produce related examples that may be of representation-theoretic interest.

\bibliographystyle{amsalpha}
\bibliography{disk-strip-bib}{}
\end{document}